\definecolor{darkblue}{rgb}{0,0.08,0.45}
\definecolor{darkred}{RGB}{139,0,0}
\definecolor{Darkblue}{RGB}{0,0,139}
\definecolor{forestgreen}{RGB}{34,139,34}
\definecolor{darkgreen}{RGB}{0,100,0}
\tikzset{>=latex}
\DeclarePairedDelimiterX{\kldivx}[2]{(}{)}{%
  #1\;\delimsize\|\;#2%
}
\newcommand{\kldiv}{D\kldivx}
\DeclarePairedDelimiterX{\kldivxbig}[2]{\Big(}{\Big)}{%
  #1\;\Big\|\;#2%
}
\newcommand{\kldivbig}{D\kldivxbig}
\theoremstyle{plain}
\newtheorem{theorem}{Theorem}
\theoremstyle{plain}
\newtheorem{itheorem}{Informal Theorem}
\theoremstyle{plain}
\newtheorem{lemma}{Lemma}
\theoremstyle{definition}
\renewcommand{\P}{\mathbb{P}}
\newcommand{\wv}{\mathbf{w}}
\newcommand{\Am}{\mathbf{A}}
\newcommand{\RR}{\mathbb{R}}
\newcommand{\uv}{\mathbf{u}}
\newcommand{\vv}{\mathbf{v}}
\newcommand{\covm}{\boldsymbol{\Sigma}}
\renewcommand{\b}{\mathrm{base}}
\newcommand{\yv}{Y}
\newcommand{\yvc}{y}
\newcommand{\yrep}{Y^{\mathrm{rep}}}
\newcommand{\yrepc}{y^{\mathrm{rep}}}
\newcommand{\thetav}{\Theta}
\newcommand{\thetavc}{\theta}
\newcommand{\lambdav}{\Lambda}
\newcommand{\lambdavc}{\lambda}
\newcommand{\p}{f}
\newcommand{\palt}{\p_{\mathrm{alt}}}
\newcommand{\paux}{\widetilde{\p}}
\newcommand{\Xm}{\mathbb{X}}
\newcommand{\xv}{x}
\newcommand{\imat}{\mathbb{I}}
\newcommand{\MI}{I}
\newcommand{\absparam}{\Phi}
\newcommand{\absparamc}{\phi}
\newcommand{\E}{\mathbb{E}}
\newcommand{\subtheta}{\absparam}
\newcommand{\subthetac}{\absparamc}
\newcommand{\yval}{\yv^{\mathrm{val}}}
\newcommand{\imi}{IMI\xspace}
\newcommand{\fmi}{FMI\xspace}
\newcommand{\psd}{PSD\xspace}
\newcommand{\psdm}{\mathrm{PSD}}
\newcommand{\imim}{\MI^{id}}
\newcommand{\fmim}{\MI^{fa}}
\newcommand{\ddelfa}{\Delta^{fa}}
\newcommand{\daddfa}{\Sigma^{fa}}
\newcommand{\ddelid}{\Delta^{id}}
\newcommand{\daddid}{\Sigma^{id}}
\newcommand{\pvp}{\mathrm{PVP}}
\newcommand{\expratio}{R\left(\p_{\b}, \p\right)}
\newcommand{\ind}{\mathrel{\perp\!\!\!\!\!\:\perp}}
\newcommand{\dspace}{\RR^n}
\newcommand{\tspace}{\RR^d}
\definecolor{sigcolor}{RGB}{204, 143, 0}
\definecolor{delcolor}{RGB}{216, 27, 27}
\theoremstyle{plain}
\theoremstyle{plain}
\newtheorem{definition}{Definition}
\theoremstyle{plain}
\title{Identifiability and Falsifiability: Two Challenges for Bayesian Model Expansion}
\author[1]{Collin Cademartori}
\affil[1]{Department of Statistical Sciences, Wake Forest University}
\begin{document}

\maketitle

\begin{abstract}
  We study the identifiability of parameters and falsifiability of predictions under the process of model expansion in a Bayesian setting. Identifiability is represented by the closeness of the posterior to the prior distribution and falsifiability by the power of posterior predictive tests against alternatives. To study these two concepts formally, we develop information-theoretic proxies, which we term the identifiability and falsifiability mutual information. We argue that these are useful indicators, with lower values indicating a risk of poor parameter inference and underpowered model checks, respectively. Our main result establishes that a sufficiently complex expansion of a base statistical model forces a trade-off between these two mutual information quantities -- at least one of the two must decrease relative to the base model. We illustrate our result in three worked examples and extract implications for model expansion in practice. In particular, we show as an implication of our result that the negative impacts of model expansion can be limited by offsetting complexity in the likelihood with sufficiently constraining prior distributions.
\end{abstract}

\section{Introduction}\label{sec:intro}

Model expansion - the process of passing from a simpler base model to a larger, more flexible model - is a common component of statistical workflow \citep{BayesWorkflow1, BayesWorkflow2,BoxsLoop}. Such expansions are often motivated by a desire to make model assumptions more plausible or the quantification of uncertainty more realistic \citep{GreenlandId,DraperExpansion}. It is well-known that the complex models which arise from model expansion can pose challenges for inference and interpretation \citep{PCPriors}. In particular, larger models can easily yield uncertain inferences for individual parameters and power-deficient tests for goodness-of-fit. We refer to these two difficulties as the identifiability and falsifiability challenges respectively.

Yet, it is also evident that these challenges are not inevitable consequences of model expansion. \citet{GustafsonId1} demonstrate that model expansion can improve parameter identification in some cases, for example. Likewise, the negative effects of expansion on the power of fitness tests may be offset if we are able to derive a better test quantity, e.g. one which is pivotal \citep{PivotalChecks, GelmanExamples}.
Thus, while there is an apparent tension between model expansion, identifiability, and falsifiability, the dynamics of this tension are often unclear. This paper aims to clarify these dynamics by answering two key questions:

\begin{enumerate}
\item[] \textbf{(Question 1) To what extent are these two challenges avoidable?} Specifically, is there a limit to the complexity of the expanded model beyond which we can no longer avoid diminishing identifiability and falsifiability?
\item[] \textbf{(Question 2)  How can we best address these two challenges in practice?} Specifically, are there generally applicable strategies which can be implemented to limit the negative effects of model expansion on identifiability and falsifiability? 
\end{enumerate}

We answer these questions by first formalizing identifiability and falsifiability in terms of the information-theoretic concept of mutual information between observed data and model parameters. Using these quantities, our main result answers Question 1 in the negative. In particular, Theorem \ref{thm:tradeoff} establishes that, when an expanded model is sufficiently more complex than a base model (in an appropriately defined sense), at least one of identifiability and falsifiability must decrease from base model to expanded model. In the process of developing this result, we provide a partial answer to Question 2 by isolating conditions on the prior distribution which can limit this trade-off.

\subsection{Model Expansion}

 We start by defining the types of model expansions to which our results apply. We write $\p_{\b}(\yvc,\thetavc)$ for the density function of some base model with prior $\p_{\b}(\thetavc)$ defined over parameters $\thetav\in\RR^d$ and likelihood $\p_{\b}(\yvc\mid\thetavc)$ defined for data $\yv\in\RR^n$. Expansions of this base model are defined as follows.

\begin{definition}[Model Expansion]\label{def:model_expansion}
  Let $\p_{\b}(\yvc,\thetavc)$ be a base model density as above. For the same data $\yv$, let $\p$ be the density function of an additional model with likelihood $\p(\yvc\mid\thetavc,\lambdavc)$ and prior $\p(\thetavc,\lambdavc)$ defined over parameters $\thetav\in\RR^d$ and $\lambdav\in\RR^k$. Then $\p$ is an expansion of $\p_{\b}$ if
\begin{equation}
  \label{eq:expansion_def}
  \p_{\b}(\yvc,\thetavc) = \p(\yvc,\thetavc\mid\lambdavc = \lambdavc_0) \text{ for a fixed } \lambdavc_0\in[-\infty,\infty]^k.
\end{equation}
If $[\lambdavc_0]_j$, the $j^{\mathrm{th}}$ component of $\lambdavc_0$, is $\pm\infty$ for some $1\leq j\leq k$, then $\p(\yvc,\thetavc\mid\lambdavc = \lambdavc_0)$ in \eqref{eq:expansion_def} is understood as the density of the distributional limit of $\p(\yvc,\thetavc\mid\lambdavc)$ as $\lambdavc\to\lambdavc_0$ (when the limit and density exist).
\end{definition}
This framework includes many common examples of model expansion:

\begin{itemize}
\item Let $\p_{\b}(\yvc,\thetavc)$ be a generalized linear model with response vector $\yv$ and parameters $\thetav$. Adding a new predictor and coefficient $\lambdav$ with independent prior is then an expansion since $\p_{\b}(\yvc,\thetavc)=p(\yvc,\thetavc\mid\lambdavc=0)$.
\item Let $\p_{\b}(\yvc,\thetavc)$ be an exchangeable Poisson model over data $[\yv]_i$ with rate $\thetav$. Consider extending this to a negative binomial model with overdispersion parameter $\lambdav$ (with independent prior):
  \[
    \p(y\mid\theta,\lambda) = \binom{y + \lambda - 1}{y}\left( \frac{\theta}{\theta+\lambda} \right)^y\left( \frac{\lambda}{\theta+\lambda} \right)^\lambda.
  \]
  For all $(\yvc,\thetavc)$, we have $\p_{\b}(\yvc,\thetavc)=\lim_{\lambdavc\to\infty}\p(\yvc,\thetavc\mid\lambdavc)$, and thus $\p(\cdot\mid\lambdavc)$ converges in distribution to $\p_{\b}$ as $\lambdavc\to \infty$ by Scheffe's Theorem, so this is again a model expansion by our definition.
\end{itemize}

\subsection{Identifiability and Falsifiability}\label{subsec:id_fa_informal}

We now describe the statistical concepts of identifiability and falsifiability informally, before providing formal information-theoretic definitions in Section \ref{sec:id_and_fa}.

\subsubsection{Identifiability}

Identifiability refers to our ability to use observed data to gain information about unobserved parameters. In frequentist inference, identification is usually defined as a binary property of a parametric family $\{\p(\yvc\mid \thetavc)\}_{\thetavc\in\tspace}$, where the family is identified when $\thetavc_1 \neq \thetavc_2$ implies $\p(\cdot\mid \thetavc_1) \neq \p(\cdot \mid \thetavc_2)$. When this property fails, the model is nonidentified, in which case the maximum likelihood estimator typically fails to be well-defined.

For Bayesian models, estimation is still possible with nonidentified likelihood families, as the posterior distribution is well-defined whenever the prior is a proper probability distribution. However, nonidentification can still undermine the usefulness of posterior inference. For instance, the overparametrized location model $\yvc \sim \mathrm{normal}\left(\thetav_1 + \thetav_2, 1\right)$ is nonidentified in the frequentist sense. In the Bayesian setting, if $\thetav_1$ and $\thetav_2$ are assigned i.i.d. normal priors, then $\p(\thetavc_2-\thetavc_1 \mid \yvc) = \p(\thetavc_2-\thetavc_1)$, i.e. the marginal posterior reduces to the prior for $\thetav_2-\thetav_1$. In other words, we learn nothing about this difference.

On the other hand, for parameters $\absparam$ (possibly equal to the full parameter vector $\thetav$), it is possible to learn \textit{nearly} nothing about $\absparam$ and have $\p(\absparamc\mid \yvc)\approx \p(\absparamc)$ even if the model is identified in the frequentist sense. We will refer to such a parameter $\absparam$ as being weakly identified in the model $\p(\thetavc, \yvc)$ if $\p(\absparamc\mid \yvc)$ is sufficient close to $\p(\absparamc)$. We formally quantify the weakness of identification using constructs from information theory in Section \ref{sec:id_and_fa}.

\subsubsection{Falsifiability}

Falsifiability refers to our ability to detect deficiencies in model fitness. In the context of goodness-of-fit testing, falsifiability is closely related to power. In Bayesian model checking, for a test statistic $T$, it is common to assess model fitness by comparing the observed value of $T(\yv)$ to values $T(\yrep)$ which might be observed in a replicated dataset $\yrep$ (i.e. an independent dataset drawn from the same distribution as $\yv$). Such comparisons can be made quantitative by computing the posterior predictive $p$-value:
\begin{equation}
  \label{eq:pppv_def}
  p_T(\yvc) = \P\left(|T(\yrep)| \geq |T(\yvc)| \mid \yv = \yvc\right),
\end{equation}
where $\yrep$ is sampled from the posterior predictive distribution, which is given as $\p(\yrepc\mid \yvc) = \E\left[ \p(\yrepc\mid \thetav) \mid \yv = \yvc \right]$. We can then construct a test of the model by comparing $p_T(\yv)$ to some significance threshold $\alpha$ and rejecting the model if $p_T(\yv) < \alpha$.

The power of such a test depends on the proximity of our proposed model to the true model, the test statistic $T$, and the rejection threshold $\alpha$. We set aside the question of proximity between the true and proposed model, since we can never directly control this in practice. We also take the test statistic $T$ to be given (though our later analysis will not depend on a choice of statistic). For now, we focus on the choice of threshold $\alpha$.

Under the null hypothesis that the model is correct (i.e. that $\yv\sim \p(\yvc)$), \citet{PostP} showed that $p_T(\yv)$ is typically more concentrated around $1/2$ than a uniform random variable. As a result, the level of this test is typically below the rejection threshold, i.e. $\P\left(p_T(\yv) < \alpha\right) < \alpha$. We can convert $p_T(\yv)$ to a uniformly distributed, or calibrated, $p$-value by plugging it into its cumulative distribution function:
\begin{equation}
  \label{eq:post_pval_cal}
  p^{\mathrm{cal}}_T(\yvc) = \P_{\yv\sim\p(\yvc)}\left(p_T(\yv) \leq p_T(\yvc)\right).
\end{equation}
Compared to a test using $p_T^{\mathrm{cal}}$ and the same threshold $\alpha$, a test using $p_T$ will usually have lower power against alternative models. Theoretically, the added power of the calibrated $p$-value is ``free'' insofar as it requires no modification of the model or statistic. In practice, computing $p_T^{\mathrm{cal}}$ is orders of magnitude more expensive than fitting the model once, and is thus computationally intractable in all but the simplest models. 

This power deficit of the uncalibrated test can be explained by a kind of overfitting -- a result of using the data $\yv$ to both construct both the posterior $\p(\thetavc\mid\yv)$ and the test statistic $T(\yv)$. In particular, if we could observe a separate validation dataset $\yv^{\mathrm{val}}\sim \p(\yvc\mid\thetavc)$ independent of $\yv$, then the posterior predictive test using $T(\yv^{\mathrm{val}})$ would be calibrated (as is easily seen using the probability integral transform).

We can quantify the gap in power between $p_T$ and $p_T^{\mathrm{cal}}$ in terms of the behavior of conditional $p$-values. For any fixed $\thetavc$, define the conditional $p$-value
\begin{equation}
  \label{eq:cond_pval_intro}
  p_T(\yvc\mid \thetavc) = \P\left( |T(\yrep)| \geq |T(\yvc)|\mid \thetav = \thetavc \right),
\end{equation}
where $\yrep\sim \p(\yvc\mid\thetavc)$. It is easy to see that $p_T(\yvc) = \E\left[ p_T(\yvc\mid\thetav)\mid \yv = \yvc \right]$. It also follows from the proof of the nonuniformity of $p_T(\yv)$ in \cite{PostP} that the degree of nonuniformity of $p_T(\yv)$ grows with the posterior variance of conditional $p$-values (PVP):
\begin{equation}
  \label{eq:pval_var_intro}
  \pvp_T(\yvc) = \mathrm{Var} \left[ p_T(\yvc\mid\thetav) \mid \yv = \yvc\right].
\end{equation}
We interpret this variance as quantifying posterior uncertainty about the fitness of the unknown data-generating distribution $\p(\yvc\mid\thetav)$ (as measured by the statistic $T$).

In summary, falsifiability is related to the power of posterior predictive tests. The power of these tests relative to their calibrated counterparts is controlled by the variance \eqref{eq:pval_var_intro}.
In Section \ref{sec:id_and_fa}, we measure falsifiability with a quantity that can be viewed as generalizing the variance \eqref{eq:pval_var_intro} and which is independent of any particular test statistic $T$.

\subsection{Related Work}\label{subsec:related}
 Recently, statistical workflow has enjoyed increased attention as a discrete topic. This literature has developed a consistent framework and practical advice for each step of statistical analysis, including model expansion (see, e.g. \citet{BayesWorkflow1,BayesWorkflow2,BayesWorkflow3}).
 We seek to complement this work by studying model expansion as a distinct regime.

\citet{GreenlandId} defines a notion of model expansion which is similar to our Definition \ref{def:model_expansion} and studies nonidentification in the expanded model. Whereas Greenland considers strictly nonidentified cases, this work studies model expansion as a process which tends to weaken identifiability continuously. Our results reinforce Greenland’s conclusion that, in the presence of weak identification, “any analysis should thus be viewed as a part of a sensitivity analysis which depends on external plausibility considerations.”

Gustafson studies the asymptotics of posterior distributions for strictly nonidentified likelihood models, showing that the posterior may be substantially more informative than the prior in the infinite-data limit \citep{GustafsonId1,GustafsonId2}. Specifically, \citet{GustafsonId2} shows this can occur when the prior encodes dependence between identified and non-identified parameters. Our main result echoes this conclusion, showing that prior dependence between parameters can improve identification also in the preasymptotic regime.
Other methods for detecting and dealing with identification problems have been studied in, e.g. \citet{BayesLearning,DataCloning}.

 As we argued above, problems of falsifiability are directly connected to debates over the conservativity and power of the posterior predictive $p$-value. Various forms of this problem have been described, and possible solutions have been proposed in \citet{BayarriBerger99,BayarriBerger00,RobinsEtAl00,PostPPharma,SampledPValues,ChiSquaredGOF,PivotalSampled,PostProcessPValues}. This work complements these arguments by relating the degree of conservativity in posterior predictive checks to model complexity.

 Our approach to the problems of identifiability and falsifiability follows many previous successes in using information-theoretic tools to study the properties of statistical models.
\begin{enumerate}
 \item We quantify uncertainty and information with the (differential) entropy and mutual information respectively. \cite{MaxEntropyPriors} used the representation of uncertainty as entropy to argue for the use of maximum entropy priors. Likewise, \cite{LindleyMeasure} pioneered the application of mutual information to the problem of designing experiments for optimal information gain.
 \item The mutual information between data and model parameters has been extensively studied as an optimization target for problems in Bayesian inference. Reference priors are defined asymptotically by maximizing this mutual information under successive sampling of the data generating process \citep{RefPrior,RefPriorProduct,RefPriorPartial,ITAsymp,RefPriorAsymp}. This maximality property justifies viewing reference priors as containing minimal prior information about the parameter of interest. On the other hand, optimizations of the mutual information with respect to the likelihood function have proven useful in Bayesian experimental design and Bayesian optimization \citep{BayesOpt,BayesOED1,BayesOED2}.
 \item \citet{PCPriors} propose a method of prior specification that penalizes deviation from a base model which closely reflects the notion of model expansion used in this work. These penalized complexity priors impose a joint structure on the parameters by constructing a density which decays with an information-theoretic measure of model complexity. Similarly, \citet{R2D2M2} introduce a joint prior on multilevel regression parameters which is explicitly designed to scale in a controllable and interpretable manner as the number of regressors increases. More generally, \citet{BayesWorkflow1} emphasize the need to ``to think in terms of the joint prior over all the parameters in a model''. Our conclusions about prior specification in Section \ref{sec:thm1_formal} mirror and reinforce these ideas.
 \item The Rashomon effect, defined by \cite{TwoCultures}, occurs when many models achieve similar overall loss but provide very different predictions. We demonstrate that falsifiability is related to the multiplicity of plausible predictive distributions. Our measure of falsifiability also rests on a similar KL divergence to the Rashomon capacity, a metric for quantifying the Rashomon effect \citep{RashomonCapacity}.
 \end{enumerate}

 \subsection{Outline of Paper}
 The remainder of this paper is organized as follows. In Section \ref{sec:reg_ex}, we present a simple example to build intuition for the effect of model expansion on identifiability and falsifiability using familiar statistical quantities. Section \ref{sec:id_and_fa} begins by presenting our information-theoretic proxies for the concepts of identifiability and falsifiability, and concludes with the statement of our main result, which establishes a trade-off between these quantities under model expansion. We examine the implications of this result in Section \ref{sec:id_fa_expand} with three worked examples, and Section \ref{sec:conclusion} presents concluding remarks.

 \subsection{Notation}

 Random variables and vectors are denoted by upper-case letters, with Roman letters (e.g. $\yv$) for data and Greek letters (e.g. $\thetav$) for parameters. Corresponding densities are denoted with lower-case arguments (e.g. $f(\yvc,\thetavc)$ for the joint density of $(\yv,\thetav)$). Some expressions will require plugging random variables into their own densities. In this case, we write, e.g., $\p(\yv)$ for $\yv$ plugged into its density $\p(\yvc)$. 
 The full parameter vector of a model is written either $\thetav$ (for a base model) or $(\thetav,\lambdav)$ (for an expanded model). At times, we will wish to refer to an unspecified subset of these parameter vectors, which we denote by $\absparam$.
 Matrices are written in blackboard face (e.g. $\mathbb{X}$), with $\mathbb{I}_d$ denoting the $d\times d$ identity matrix. The 2-norm of vector $v$ is $\|v\|$; the $i^{\mathrm{th}}$ component is $[v]_i$; and orthogonality is denoted by $\perp$. Independence of random variables is denoted by $\ind$.
 
\section{A Toy Regression Example}\label{sec:reg_ex}
We now illustrate the connection between identifiability, falsifiability, and model expansion in a simple regression example. 
Consider a linear regression base model $\p_{\b}$ with three observations $\yv\in\RR^3$, two predictors $[x_1\;x_2]=\Xm_{\b}\in\RR^{3\times 2}$, and known noise variance equal to $1$. Using unit normal priors, $\p_{\b}$ becomes
\begin{equation}
  \label{eq:reg_ex_base}
  \yv\mid\thetav \sim \mathrm{normal}\left(\Xm_{\b}\thetav, \imat_3\right),\quad \thetav\sim\mathrm{normal}\left(0, \imat_2\right).
\end{equation}
We construct $x_1$ and $x_2$ to be linearly independent with unit norm, and with interaction $\xv_{\mathrm{int}} = ([\xv_1]_1[\xv_2]_1, [\xv_1]_2[\xv_2]_2)^T$ linearly independent of $\xv_1$ and $\xv_2$. 

We form an expansion $\p(\yvc, \thetavc, \lambdavc)$ of \eqref{eq:reg_ex_base} by adding a third predictor $\xv_3$ with coefficient $\lambdav$. Let $\Xm = [\Xm_{\b}\; \xv_3]$ be the expanded predictor matrix, so that the expansion $\p$ becomes
\begin{equation}
  \label{eq:reg_ex_exp}
  \yv\mid\thetav,\lambdav \sim\mathrm{normal}\left( \Xm
  \left[\begin{smallmatrix}
    \thetav\\\lambdav
  \end{smallmatrix}\right],
   \imat_3\right),\quad \left[\begin{smallmatrix}
    \thetav\\\lambdav
  \end{smallmatrix}\right]\sim\mathrm{normal}\left(0, \imat_3\right).
\end{equation}

The coefficients $\thetav$ are shared by the base and expanded models. We quantify the identification of $\thetav$ using the posterior standard deviations $\sqrt{\mathrm{Var}\left([\thetav]_i\mid \yv=\yvc\right)}$ for $i = 1,2$. Identification is then compared between models using the worst-case ratio of these:
\begin{equation}
  \label{eq:max_std_ratio}
  \mathrm{SR} \stackrel{\mathrm{def}}{=} \min_{i \in \{1,2\}}\frac{\sqrt{\mathrm{Var}_{\p_{\b}}\left([\thetav]_i\mid \yv=\yvc\right)}}{\sqrt{\mathrm{Var}_{\p}\left([\thetav]_i\mid \yv=\yvc\right)}}.
\end{equation}
Smaller values of \eqref{eq:max_std_ratio} indicate worse identification of $\thetav$ in the expanded model.

We study falsifiability by fixing an alternative model $\p_{\mathrm{alt}}(\yvc)$ with nonzero interaction:
\begin{equation}
  \label{eq:reg_ex_alt}
  \yv \sim \mathrm{normal}\left(\xv_1 + \xv_2 + 2\xv_{\mathrm{int}}, \imat_3\right).
\end{equation}
To test against this model, we choose test statistic $T(\yvc) = \xv_{\mathrm{int}}^T\yvc$ and consider the tests that reject when $p_T(\yv) < 0.1$ and $p_T^{\mathrm{cal}}(\yv) < 0.1$ respectively, where $p_T$ is the posterior predictive $p$-value \eqref{eq:pppv_def} and $p_T^{\mathrm{cal}}$ is the calibrated $p$-value \eqref{eq:post_pval_cal}. We then define the power under our two tests as
\begin{equation}
  \label{eq:reg_ex_power_def}
  \mathrm{Pow}^{\mathrm{post}}\left(T,\alpha\right) = \P_{\p_{\mathrm{alt}}}\Big(p_T(\yv) < \alpha\Big),\quad \mathrm{Pow}^{\mathrm{cal}}\left(T,\alpha\right) = \P_{\p_{\mathrm{alt}}}\Big(p^{\mathrm{cal}}_T(\yv) < \alpha\Big).
\end{equation}
In Section \ref{subsec:id_fa_informal}, we noted that a major threat to falsifiability is the power deficit of the uncalibrated test relative to the calibrated test. We quantify falsifiability in this example using the relative power of the uncalibrated test,  $\mathrm{Pow}^{\mathrm{post}}\left(T,\alpha\right) / \mathrm{Pow}^{\mathrm{cal}}\left(T,\alpha\right)$. We quantify  the change in falsifiability from base to expanded model with the ratio:
\begin{equation}
  \label{eq:reg_ex_power}
  \mathrm{PR} = \frac{\mathrm{Pow}^{\mathrm{post}}_{\p}\left(T,\alpha\right) / \mathrm{Pow}^{\mathrm{cal}}_{\p}\left(T,\alpha\right)}{\mathrm{Pow}^{\mathrm{post}}_{\p_{\b}}\left(T,\alpha\right) / \mathrm{Pow}^{\mathrm{cal}}_{\p_{\b}}\left(T,\alpha\right)}.
\end{equation}
Smaller values of this ratio correspond to expanded models which have worse falsifiability (using test statistic $\xv_{\mathrm{int}}^T\yvc$) relative to the base model.

To study the effect of different choices of $\xv_3$ on SR and PR, we sample $\xv_3$ uniformly from the sphere $S = \{\xv\mid \|\xv\| = 1\}$. We then compare SR and PR to the quantity:

\begin{equation}
  \label{eq:reg_ex_proj}
  \pi(\xv_3) = \frac{\xv_3^T \left[\imat_2 - \Xm_{\b} \left(\Xm_{\b}^T\Xm_{\b}\right)^{-1}\Xm_{\b}^T\right]\xv_3}{\|\xv_3\|^2}.
\end{equation}
The numerator of \eqref{eq:reg_ex_proj} is the squared norm of the projection of $\xv_3$ onto the orthogonal complement of the column space of $\Xm_{\b}$. Consequently, we have $\pi\in [0,1]$, with $\pi(\xv_3) = 1$ if $\xv_3\perp \xv_1,\xv_2$ (in which case $\xv_3$ is collinear with $\xv_{\mathrm{int}}$).
Figure \ref{fig:reg_ex} plots the values of $\mathrm{SR}$ and $\mathrm{PR}$ against $\pi$ when $\xv_3\sim\mathrm{uniform}(S)$. For almost all $\xv_3$, we have $\mathrm{SR} < 1$ and $\mathrm{PR} < 1$, indicating worse identifiability and falsifiability relative to the base model. We also observe a trade-off: SR tends to increase with $\pi$ whereas PR tends to decrease. 

\begin{figure}[t]
  \centering
  \includegraphics[scale=0.6]{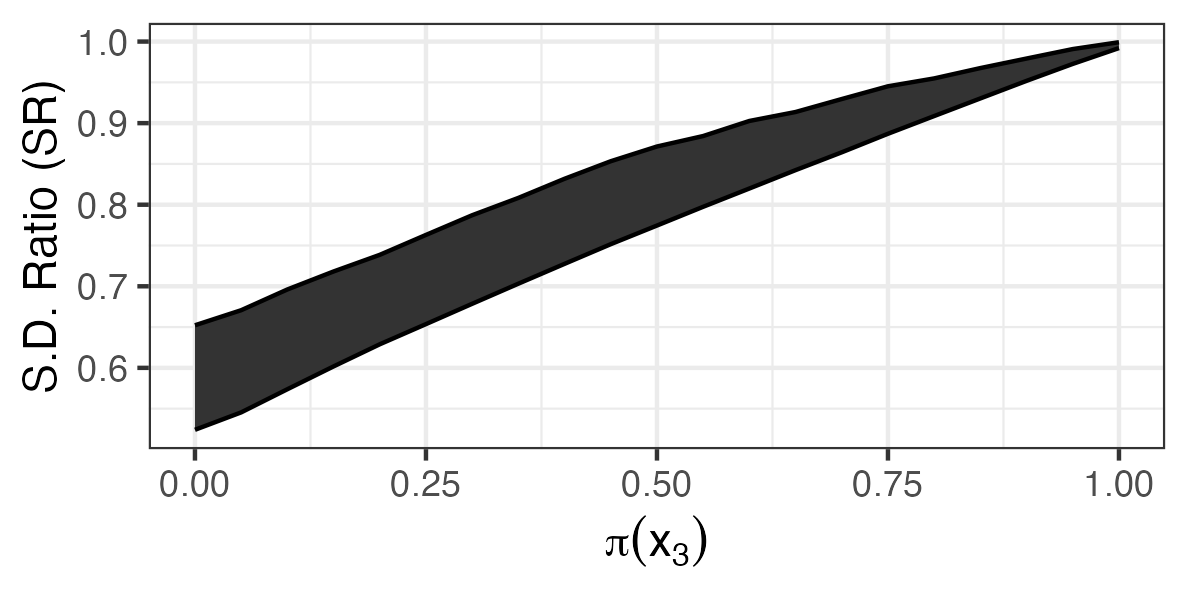}
  \includegraphics[scale=0.6]{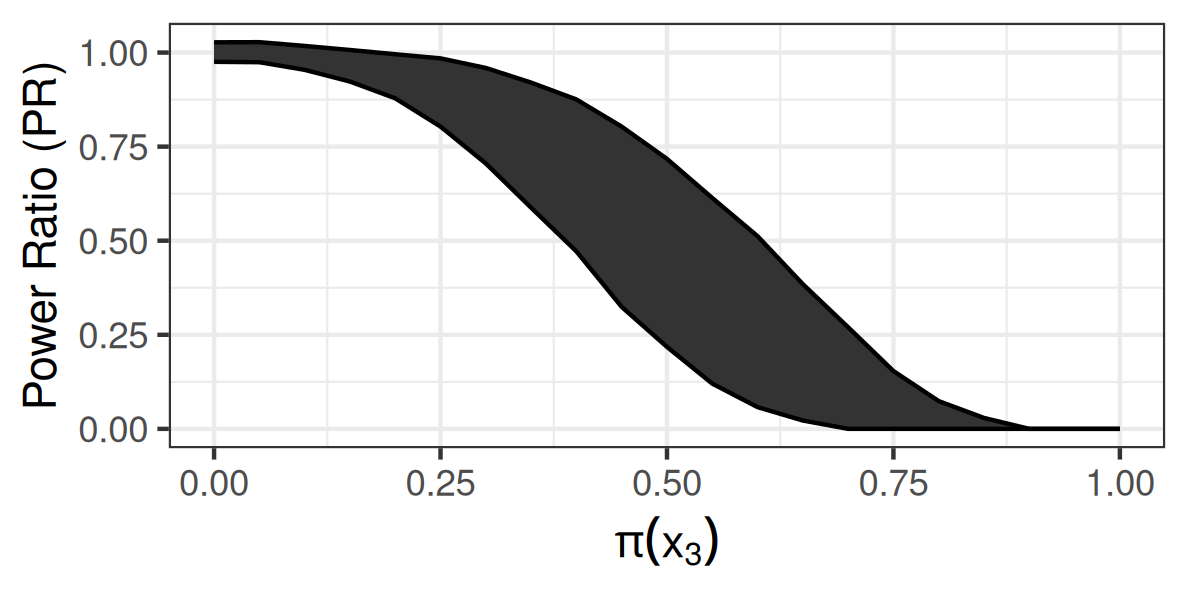}
  \caption{80\% quantile bands for the standard deviation ratio SR (left) and the power deficit ratio PR (right) against $\pi$. Identification increases with $\pi$, nearly matching the base model at $\pi =1$. Falsifiability falls with $\pi$, nearly matching the base model at $\pi=1$.}
  \label{fig:reg_ex}
\end{figure}

This trade-off can be explained as follows. We noted in Section \ref{subsec:id_fa_informal} that the power (relative to a calibrated test) falls as the posterior variance of the conditional $p$-values (PVP) increases. For our expanded model, we can write the conditional $p$-values as 
\begin{align}
  p_T(\yvc\mid \thetavc,\lambdavc)
  &= 1 - F\Big(\xv_{\mathrm{int}}^T\yvc\;\Big\vert\; [\thetavc\;\lambdavc]\Xm^T\xv_{\mathrm{int}}, \|\xv_{\mathrm{int}}\|^2\Big),\label{eq:reg_ex_cond_pval}
\end{align}
where $F(\cdot\mid \mu, \sigma^2)$ is the normal cumulative distribution function with mean $\mu$ and variance $\sigma^2$. Recall that $\Xm_{\b}$ was constructed so that $\Xm_{\b}^T\xv_{\mathrm{int}} = 0$. If $\pi=0$, then $\xv_3$ is collinear with $\xv_1$ and $\xv_2$, and the means $[\thetavc\;\lambdavc]\Xm^T\xv_{\mathrm{int}}$ are all identically zero. Thus, when $\pi =0$, the posterior variance of \eqref{eq:reg_ex_cond_pval} is $0$, the best case for falsifiability. However, when $\pi=0$, we also have that $\Xm$ is singular, the worst-case scenario for identifiability.

\section{The Trade-Off Between Identifiability and Falsifiability}\label{sec:id_and_fa}

We now turn to the relationship between identifiability, falsifiability, and model expansion in general. In order to establish our formal results, we need four mathematical quantities:
\begin{enumerate}
  \item For a base model $\p_{\b}$ and expansion $\p$, an overall \textbf{identifiability measure}, for which larger values are associated with better identifiability.
  \item For a base model $\p_{\b}$ and expansion $\p$, an overall \textbf{falsifiability measure}, for which larger values are associated with better falsifiability.
  \item For base and expanded models $\p_{\b}$ and $\p$, an \textbf{expansion measure} that quantifies how much complexity is added by the expansion parameters $\lambdav$. 
  \item For the expanded model $\p$, an \textbf{expansion threshold} which quantifies how much expansion is sufficient to force a trade-off between identifiability and falsifiability.
\end{enumerate}

Once we have established the definitions of these quantities, the statement (and proof) of the main result is straightforward. By contrast, it takes some effort to fully motivate the definitions and connect them to familiar statistical quantities. For this reason, we state Theorem \ref{thm:tradeoff} informally now and then ``fill in'' the necessary definitions, providing a fully formal restatement of the theorem in Section \ref{sec:thm1_formal}.

\begin{itheorem}[Identifiability - Falsifiability Trade-off]\label{thm:tradeoff_informal}
  Consider a base model and some expansion. If we have $\text{\textup{\textbf{expansion threshold}}} < 1$ and
  \begin{equation*}
    \label{eq:tradeoff_condition_informal}
    \text{\textup{\textbf{expansion measure}}} \geq \text{\textup{\textbf{expansion threshold}}},
  \end{equation*}
  then we have either that
  \begin{align*}
      \text{\textup{expanded model}} & \text{\textup{\textbf{ identifiability measure}}}\\
      \leq \text{\textup{base model}}& \text{\textup{\textbf{ identifiability measure}}},
  \end{align*}
  or that
  \begin{align*}
      \text{\textup{expanded model}} & \text{\textup{\textbf{ falsifiability measure}}}\\
      \leq \text{\textup{base model}}& \text{\textup{\textbf{ falsifiability measure}}},
  \end{align*}
\end{itheorem}

Theorem \ref{thm:tradeoff_informal} states that, if we expand a base model enough, either identifiability or falsifiability must decrease. This conclusion is less dire than what we observed in the regression example of Section \ref{sec:reg_ex}, where \textit{both} identifiability and falsifiability decreased in every expansion \eqref{eq:reg_ex_exp} (by our chosen metrics). Despite a leap in generality, Theorem \ref{thm:tradeoff_informal} does retain two key features of that example: (i) an inverse association between expansion and identifiability/falsifiability, and (ii) a trade-off between identifiability and falsifiability among expansions of the base model.

\subsection{Information-Theoretic Background}
We give a brief overview of concepts and measures from information theory which will be needed throughout. The reader may consult the Supplementary Materials for further details and background \citep{SuppMat}.

For a joint model $\p(\yvc, \thetavc)$, the (differential) entropy of $\thetav$ is defined as
\begin{equation}
  \label{eq:entropy_def}
  h_{\p(\thetavc)}(\thetav) = \E_{\thetav\sim\p(\thetavc)}\Big[-\log \p(\thetav)\Big].
\end{equation}
The function $-\log \p(\thetav)$ quantifies how atypical a random sample $\thetav$ is of its distribution $\p$, on average. For instance, among distributions supported on a compact interval $[a,b]$, the entropy is maximized by the uniform distribution over $[a,b]$. Since all sampled values from the uniform distribution are equally typical, we may say that no one sampled value is typical of the distribution as a whole. 
We note that while $h_{\p}(\thetav)$ may appear to be a function of the random variable $\thetav$, \eqref{eq:entropy_def} shows that $h_{\p}$ is an operator on random variables (like expectation and variance) yielding a fixed real number (which only depends on the density $\p(\thetavc)$). This notation is standard in information theory, and we adopt it here. This interpretational convention will also apply the subsequent information-theoretic quantities introduced in this section.

With $(\yv,\thetav)\sim\p\left( \yvc,\thetavc \right)$, the conditional entropy of $\thetav$ given $\yv$ is
\begin{equation}
  \label{eq:cond_entropy_def_app}
  h_{f(\yvc,\thetavc)}\left( \thetav\mid\yv \right) = \E\Big[-\log \p(\thetav\mid\yv)\Big].
\end{equation}
We note that the conditional entropy is also the average entropy of the conditional distributions $\p(\thetavc\mid\yv)$, i.e. $h_{\p(\yvc,\thetavc)}\left( \thetav\mid\yv \right) = \E_{\yv\sim\p(\yvc)}\left[ h_{\p(\thetavc\mid\yv)}(\thetav) \right]$.

The mutual information between $\yv$ and $\thetav$ is defined as
\begin{equation}
  \label{eq:mi_def}
  \MI_{\p(\yvc,\thetavc)}\left( \yv, \thetav \right) = h_{\p(\thetavc)}\left(\thetav\right) - h_{\p(\yvc, \thetavc)}\left(\thetav\mid \yv\right).
\end{equation}
This mutual information may be interpreted as the expected reduction in entropy (and gain in information) when passing from the prior to the posterior distribution. We note that the mutual information is symmetric in its arguments: $\MI_{\p}(\yv,\thetav) = \MI_{\p}(\thetav,\yv)$.
Sometimes, the joint model will extend over a replicated dataset $\yrep$. In this case, we may also define the conditional mutual information between $\thetav$ and $\yrep$ given $\yv$ as
\begin{equation}
  \label{eq:cmi_def}
  \MI_{\p(\yvc,\yrepc,\thetavc)}\left( \yrep, \thetav\mid \yv \right) = h_{\p(\yrepc, \thetavc)}\left(\thetav \mid \yv\right) - h_{\p(\yvc,\yrepc,\thetavc)}\left(\thetav\mid \yv, \yrep\right).
\end{equation}

The mutual information can also be defined in terms of a measure of discrepancy between distributions, the KL divergence, which is defined for densities $\p_1,\p_2$ as
\begin{equation}
  \label{eq:KL_div_def}
  \kldiv{\p_1}{\p_2} = \E_{\thetav \sim \p_1(\thetavc)}\left[\log \frac{\p_1(\thetav)}{\p_2(\thetav)}\right].
\end{equation}
The KL divergence measures discrepancy between $\p_1$ and $\p_2$ insofar as $\kldiv{\p_1}{\p_2} \geq 0$ with equality if and only if $\p_1 = \p_2$ $\p_1$-almost everywhere. While the KL divergence is not a distance (it is asymmetric and does not satisfy the triangle inequality), Pinsker's inequality states that $\sqrt{\kldiv{\p_1}{\p_2}/2}$ upper bounds the total variation distance. Using KL divergence, the mutual information can be written as
\begin{equation}
  \label{eq:MI_KL}
  \MI_{\p(\yvc,\thetavc)}\left(\yv, \thetav\right) = \E_{\yv\sim\p(\yvc)}\Big[\kldiv{\p(\thetavc\mid \yv)}{\p(\thetavc)}\Big].
\end{equation}
When distributions are clear from context, we may drop subscripts from entropies and mutual informations, writing e.g. $h(\thetav)$ and $\MI\left( \yv, \thetav \right)$.

\subsection{Definitions of Quantities for Theorem \ref{thm:tradeoff}}\label{sec:quantity_defs}

Using these information-theoretic ideas, we construct formal definitions for the four quantities used in the informal statement of Theorem \ref{thm:tradeoff_informal} above.

\subsubsection{Identifiability Measure}

For a vector of parameters $\absparam$, we define a measure of identifiability as follows.

\begin{definition}[Identifiability Mutual Information]\label{def:imi}
  For a model $\p$ over data $\yv$, let $\absparam$ denote (a subset of) the model's parameters. We define the identifiability mutual information (\imi) for $\absparam$ as 
  \begin{equation}
    \label{eq:epsilon_id_model}
    \imim_{\p}(\absparam) = \MI_{\p}\left(\yv, \absparam\right).
  \end{equation}
\end{definition}

Smaller values of $\imim_{\p}(\absparam)$ correspond to weaker identification of $\absparam$. In particular, if $\imim_{\p}(\absparam)$ is small, 
then \eqref{eq:MI_KL} implies that the posterior must be close to the prior distribution, i.e. $p(\thetavc\mid \yvc) \approx p(\thetavc)$. More concretely, if $\absparam$ is a scalar parameter with a symmetric, log-concave prior, then Theorem 1 of \citet{MadimanBound} implies that
\begin{equation}
  \label{eq:exp_sd_lb}
  \sqrt{\E \mathrm{Var}\left(\absparam\mid \yv\right)} \geq \left(\frac{6}{\pi e}\right)^{1/2} \exp\left(-\imim_{\p}(\absparam)\right)\mathrm{SD}\left(\absparam\right).
\end{equation}
For instance, if $\imim_p(\absparam) < 0.1$, then this tells us that $\sqrt{\E \mathrm{Var}\left(\absparam\mid \yv\right)}$ must be at least $\approx 3/4$ of the prior standard deviation of $\absparam$.

To compare identifiability between a base model $\p_{\b}$ and expanded model $\p$, we compute the \imi with respect to the shared parameters $\absparam = \thetav$. We will usually simplify the notation as $\imim_{\b} = \imim_{\p_{\b}}(\thetav)$ and $\imim = \imim_{\p}(\thetav)$. For the purpose of such comparisons, we must keep in mind that the \imi measures identifiability \textit{relative} to the prior of $\thetav$. Often, this is natural (e.g. when we are concerned with the contribution of a research finding to existing prior knowledge). However, if we expand a model in such a way that adds prior information about $\thetav$, then it is possible for both the posterior entropy of $\thetav$ and the identifiability mutual information to decrease. In other words, identification relative to the prior may decrease while the posterior becomes more concentrated relative to the base model. This disconnect between absolute and relative notions of identification can be avoided if we exclude from consideration expansions which decrease the prior entropy $h(\thetav)$, which will hold for most interesting cases of model expansion.

\subsubsection{Falsifiability Measure}

We now use information-theoretic constructs to quantify falsifiability. In Section \ref{sec:intro}, we discussed falsifiability in terms of posterior predictive $p$-values and tests, where falsifiability is represented by the power of such tests against alternative models. Here, we give a definition that quantifies falsifiability without reference to a test statistic.

\begin{definition}[Posterior Sampling Divergence]\label{def:psd}
  For data $\yv$ and model $p(\yvc, \absparamc)$, the posterior sampling divergence (PSD) is
  \begin{equation}
    \label{eq:psd}
    \psdm\left( \yvc \right) = \E\Big[ \kldiv{\p(\yrepc\mid\absparam)}{\p(\yrepc\mid\yvc)}\;\Big\vert\; Y = y \Big]. 
  \end{equation}
\end{definition}

Since the density $\p(\yrepc\mid \yvc)$ is equal to $\E \p(\yrepc\mid\absparam)$ for $\absparam\sim \p(\absparamc\mid\yvc)$, the \psd is just the mean discrepancy between a (randomly chosen) sampling distribution $\p(\cdot\mid\absparam)$ and its posterior average. In analogy with the variance, we thus interpret the posterior sampling divergence as measuring the variability of sampling distributions $\p(\cdot\mid \absparam)$ when $\absparam$ is drawn from the posterior. Averaging the PSD over the prior predictive distribution $\p(\yvc)$ yields another mutual information, which we use as our quantification of falsifiability.

\begin{definition}[Falsifiability Mutual Information]\label{def:fmi}
  For a model $\p(\yvc,\absparamc)$, the falsifiability mutual information (\fmi) is defined as
  \begin{equation}
    \label{eq:fmi_def}
    \fmim_{\p}(\absparam) \stackrel{\mathrm{def}}{=}-\E_{\yv\sim\p(\yvc)}\Big[ \psdm(\yv) \Big] = -\MI_{\p}\left(\yrep, \absparam\mid \yv \right),
  \end{equation}
  where $\yrep\mid\yv,\absparam\sim \p(\cdot\mid\absparam)$. 
\end{definition}

The mutual information $\MI_{\p}\left(\yrep, \absparam\mid \yv \right)$ is negated so that falsifiability falls as $\fmim(\absparam)$ falls
 (just as lower values of $\imim(\absparam)$ indicate worse identifiability). When considering base and expanded models $\p_{\b}(\yvc,\thetavc)$ and $\p(\yvc,\thetavc,\lambdavc)$, we take $\absparam = \thetav$ and $\absparam = (\thetav,\lambdav)$ respectively, and we abbreviate notation as $\fmim_{\b} = \fmim_{\p_{\b}}(\thetav)$ and $\fmim = \fmim_{\p}((\thetav,\lambdav))$.

While the mutual information underlying the \imi has been extensively studied in the Bayesian statistics literature, we are aware of only one prior occurrence of the mutual information \eqref{eq:fmi_def}, applied to study a problem of prediction for nonexchangeable data \citep{SoofiInfo}. Furthermore, while the \imi has a simple interpretation as expected information gain, the connections between the \fmi and model assessment are more subtle and various. We give two interpretations here, and a third in Section \ref{sec:alt_interp}.

\textbf{Testability of model predictions.} If $\fmim(\absparam) = 0$, then for almost all $\absparamc$ in the support of $\p(\absparamc\mid\yvc)$, we must have $\p(\yrepc \mid\absparamc) = \p(\yrepc\mid\yvc)$ (almost everywhere). In this case, the model makes a fully specific prediction about the true process $\p(\cdot\mid\absparamc)$ that generated the data, and testing the model reduces to evaluating this single predicted distribution (e.g. using a hypothesis test). On the other extreme, when $\fmim(\absparam) \gg 0$, there will be many pairs of $\absparamc_1$, $\absparamc_2$ with $\p(\absparamc_1\mid\yvc)\approx \p(\absparamc_2\mid\yvc)$, but for which the distributions $\p(\yrepc\mid\absparamc_1)$ and $\p(\yrepc\mid\absparamc_2)$ differ substantially. For a particular testing procedure, it may easily happen in this case that $\p(\yrepc\mid\absparamc_1)$ is rejected and $\p(\yrepc\mid\absparamc_2)$ is not. How the model as a whole should be evaluated in such cases is ambiguous unless we have some benchmark with which to compare our results - i.e. a means of ``calibrating'' the test.

This view bears some resemblance to Karl Popper's concept of the informative content of a scientific theory, whereby a theory which makes more precise predictions is more readily testable and has a higher corresponding informative content \citep{Popper}. By this analogy --- eliding the important differences between a scientific theory and a statistical model --- the \fmi can be thought of as measuring the ``informative content'' of a model.

\textbf{Connection with posterior predictive power.} The \fmi may also be related to the posterior predictive testing framework. Recall that, for a fixed significance threshold, the power of a posterior predictive test tends to decline relative to a calibrated test as the PVP increases, where this was given as $\pvp_T(\yvc)=\mathrm{Var}_{\thetav \sim \p(\thetavc\mid\yvc)}\left(p_T(\yvc\mid\thetav)\right)$. 
Like the \fmi, the PVP may be thought of as a kind of average discrepancy between sampling distributions. To illustrate this, we define for observed data $\yvc$, test statistic $T$, and a distribution $\p(\yrepc)$ the tail probability $S_T(\p) = \P_{\yrep \sim \p(\yrepc)}\left(|T(\yrep)| \geq |T(\yvc)|\right)$. Then we define a ``divergence'' $d^2_S$ between distributions $\p_1$ and $\p_2$ by
\begin{equation}
  \label{eq:stat_div}
  d^2_S\kldivx{\p_1}{\p_2} = \left(S_T(\p_1) - S_T(\p_2)\right)^2.
\end{equation}
Like the KL divergence, we have $d^2_S(\p_1,\p_2)\geq 0$ with equality if $\p_1 = \p_2$. However, we may have $d^2_S(\p_1,\p_2) = 0$ even if $\p_1\neq \p_2$. The PVP and \psd may now be expressed as:
\begin{equation}
  \label{eq:div_compare}
  \begin{split}
    \pvp_T(\yvc) &= \E \Big[ d^2_S\kldivxbig{\p(\yrepc\mid\thetav)}{\E\Big[ \p(\yrepc\mid\thetav)\;\Big\vert\; \yv=\yvc \Big]}\;\Big\vert\; \yv = \yvc \Big],\\
    \psdm(\yvc) &= \E \left[ \kldivbig{\p(\yrepc\mid\thetav)}{\E \left[ \p(\yrepc\mid\thetav)\;\Big\vert\; \yv=\yvc \right]} \;\Big\vert\; \yv=\yvc\right],
  \end{split}
\end{equation}
where all expectations are with respect to $\thetav\sim\p(\thetavc\mid\yvc)$. For any statistic $T$, the \psd cannot directly control the power of the corresponding test, since it is defined independently of any specific test quantity. However, the \psd does control an upper bound on the worst case $\pvp_T$ over all possible test statistics $T$. Specifically, the Bretagnolle–Huber inequality implies the following result.

\begin{lemma}\label{lem:psd_connection}
\begin{equation}
  \label{eq:pinsker_psd}
  \sup_{T:\dspace\to\RR}\pvp_T(\yvc) \leq 1 - \exp\left(-\psdm(\yvc)\right),
\end{equation}
where the supremum is taken over all measurable test statistics $T$.
\end{lemma}
\begin{proof}
  See Section 2 of the Supplementary Material \citep{SuppMat}.
\end{proof}

This bound is trivial when the right-hand side exceeds $1/4$ since Popoviciu's inequality tells us that $\sup_{T:\dspace\to\RR}\pvp_T(\yvc)\leq 1/4$. Nevertheless, in light of both \eqref{eq:pinsker_psd} and \eqref{eq:div_compare}, we associate a lower \fmi ($= -\E\left[ \psdm(\yv) \right]$) with an increased risk of low posterior predictive power, at least when testing with casually chosen test statistics $T$.
While we focus on these given interpretations of the \imi and \fmi, we note that alternative interpretations in terms of certain Bayes factors and out-of-sample fitness measures are also possible. We discuss these in more detail in Section \ref{sec:alt_interp}.

\subsubsection{Expansion Measure}

To define our expansion measure, we first decompose $\fmim$ as follows:
\begin{equation}
  \label{eq:cmi_decomp_excess}
  \fmim = -\MI\left(\yrep, (\thetav,\lambdav)\mid \yv\right) = -\MI\left(\yrep, \thetav\mid \yv\right) - \MI\left(\yrep, \lambdav\mid\yv,\thetav\right).
\end{equation}

The first term can be compared directly to the \fmi in the base model $\fmim_{\b} = -\MI_{\b}\left(\yrep, \thetav\mid\yv\right)$. To interpret the second term, we represent it as a divergence:
\begin{equation}
  \label{eq:excess_cmi_kl}
  \MI\left(\yrep, \lambdav\mid\yv,\thetav\right) = \E_{\yv \sim \p(\yvc)}\Big[\E\Big[ \kldiv{\p(\yrepc\mid \thetav,\lambdav)}{\p(\yrepc\mid \yv,\thetav)}\;\Big\vert\;\yv \Big]\Big].
\end{equation}
This representation shows that $\MI\left(\yrep, \lambdav\mid\yv,\thetav\right)$ measures the variability in the sampling distributions due to variation in the expansion parameters $\lambdav$, fixing $\thetav$. The inner expectation in \eqref{eq:excess_cmi_kl} is essentially the PSD \eqref{eq:psd} conditioned on $\thetav$. We refer to this inner expectation as the excess \psd (henceforth EPSD) due to the expansion parameters $\lambdav$:
\begin{equation}
  \label{eq:excess_psd}
  \mathrm{EPSD}_{\p}(\yvc) = \E\Big[ \kldiv{\p(\yrepc\mid \thetav,\lambdav)}{\p(\yrepc\mid \yvc,\thetav)} \;\Big\vert\; \yv = \yvc\Big].
\end{equation}
 In terms of the EPSD, we can write $\MI\left(\yrep,\lambdav\mid\yv,\thetav\right) = \E\left[ \mathrm{EPSD}(\yv) \right]$. With this, we can now define the expansion ratio of an expanded model.

\begin{definition}[Expansion Ratio]\label{def:exp_rat}
  For an expansion $\p(\yvc,\thetavc,\lambdavc)$ of $\p_{\b}(\yvc,\thetavc)$, we define the expansion ratio as:
\begin{equation}
  \label{eq:excess_ratio}
  \expratio = \frac{\E_{\yv \sim \p(\yvc)}\Big[ \mathrm{EPSD}_p(\yv) \Big]}{\E_{\yv \sim \p_{\b}(\yvc)}\Big[ \psdm_{\b}(\yv) \Big]} = \frac{\MI\left(\yrep, \lambdav\mid\yv,\thetav\right)}{-\fmim_{\b}}.
\end{equation}

\end{definition}

We think of $\expratio$ as measuring excess complexity introduced by the expansion parameters $\lambdav$ (after conditioning on $\thetav$) relative to the complexity of the base model (where complexity is expressed in terms of the sampling divergences PSD and EPSD).

\subsubsection{Expansion Threshold}

While the expansion ratio measures the amount of expansion, we now need to know how large $\expratio$ can get before forcing a trade-off between the \imi and \fmi. This threshold is given by the contraction coefficient -- a nonlinear analog of the squared correlation. If $\thetav = \thetavc$ and $\yv = \yvc$ are scalar and linearly dependent, the squared correlation between $\thetav$ and $\yv$ is the fraction of the variance in $\thetav$ explained by $\yv$. The contraction coefficient replaces this variance with the mutual information $\MI\left(\yv,\thetav\right)$ and quantifies how much of this is ``explained'' by $\yv$. 

To make this notion precise, we consider any alternative joint model $\palt(\yvc',\thetavc)$ satisfying the constraints $\MI_{\palt}\left(\yv',\thetav\right)=\MI_{\p}\left(\yv,\thetav\right)$ and $\E_{\palt}\|\thetav\|^2 = \E_{\p}\|\thetav\|^2$. (Subject to these constraints, we may have both $\palt(\thetavc) \neq \p(\thetavc)$ and $\palt(\yvc'\mid \thetavc) \neq \p(\yvc\mid\thetavc)$.) If we then sample $\yv\sim \p(\yvc\mid\thetavc)$, we can construct the auxiliary model:
\begin{equation}
  \label{eq:aux_model}
  \paux\left(\yvc,\yvc',\thetavc\right) = \p\left(\yvc\mid\thetavc\right)\palt\left(\yvc'\mid\thetavc\right)\palt\left(\thetavc\right).
\end{equation}
Roughly, this construction allows us to characterize the ``strength'' of the sampling distribution $\p(\yvc\mid\thetavc)$ by attaching it to various alternative models $\palt$ as in \eqref{eq:aux_model}. In particular, the data processing inequality from information theory tells us that
\begin{equation}
  \label{eq:dpi}
  \MI_{\paux}\left(\yv',\yv\right) \leq \MI_{\paux}\left(\yv',\thetav\right) \stackrel{(*)}{=} \MI_{\p}\left(\yv,\thetav\right),
\end{equation}
where $(*)$ follows from the constraints on $\palt$. When $\yv \sim \p(\yvc\mid\thetavc)$ accurately predicts $\thetav$, we have $\MI_{\paux}\left(\yv',\yv\right) / \MI_{\p}\left(\yv,\thetav\right) \approx 1$. On the other hand, if $\yv$ is uninformative about $\thetav$, then $\MI_{\paux}\left(\yv',\yv\right) / \MI_{\p}\left(\yv,\thetav\right) \ll 1$. This ratio is almost a useful measure of association, but it depends on a particular alternative model $\palt$. Taking the supremum over all $\palt$ satisfying our constraints removes this dependence and yields the contraction coefficient.

\begin{definition}[Contraction Coefficient]
  For a model $\p(\yvc,\thetavc)$, the contraction coefficient is defined as
  \begin{equation}
    \label{eq:contract_coef}
    \eta_{\p} = \frac{\Gamma\left(\MI_{\p}\left(\yv,\thetav\right),\E_{\p}\|\thetav\|^2\right)}{\MI_{\p}\left(\yv,\thetav\right)},
  \end{equation}
  where the function $\Gamma$ is the $F_I$ curve of \cite{SDPI_Nonlinear} and is given as
  \begin{equation}
    \label{eq:FI_curve}
    \Gamma(t;\gamma) = \sup_{\palt(\yvc',\thetavc)}\left\lbrace \MI_{\paux}\left(\yv, \yv'\right)\;\Bigg\vert\;\MI_{\palt}\left(\yv',\thetav\right) \leq t, \E_{\thetav\sim\palt(\thetavc)}\|\thetav\|^2\leq \gamma  \right\rbrace,
  \end{equation}
  and where $\paux\left(\yvc,\yvc',\thetavc\right)$ is the auxiliary model corresponding to $\palt$, as defined in \eqref{eq:aux_model}.
\end{definition}

It follows from the nonnegativity of mutual information and the data processing inequality \eqref{eq:dpi} that $0\leq \eta_{\p}\leq 1$. In light of our previous observations, we view $\eta_{\p}$ as the fraction of $\MI_{\p}\left(\yv,\thetav\right)$ explained by the sampling distribution $\p(\yvc\mid\thetavc)$.

\textit{Remarks.}
\begin{enumerate}
  \item The second moment constraint $\E_{\thetav\sim\palt(\thetavc)}\|\thetav\|^2 \leq \gamma$ in \eqref{eq:FI_curve} is often necessary for $\eta_{\p}  < 1$. This mirrors the properties of correlation, since, for scalar $\yvc$, $\thetavc$ and fixed conditional variance $\mathrm{Var}\left(\yvc\mid \thetavc\right)$, the correlation tends to $1$ as $\mathrm{Var}(\thetavc)\to\infty$. 
  \item On the other hand, for fixed marginal variance $\mathrm{Var}(\theta)$, the correlation tends to $1$ as $\mathrm{Var}(y\mid\theta)\to 0$. Similarly, the contraction coefficient approaches $1$ as (a sample from) $\p(\yvc\mid\thetavc)$ gives more accurate information about $\thetav$.
  \item For scalar $\yv$ and $\thetav$, it is shown in \cite{SDPI_Nonlinear} that if $\yv = \thetav + W$ with $W\in \RR$ supported on an infinite interval, then $\eta_{\p} < 1$. The authors state that this can be generalized to the case of multidimensional additive noise.
\end{enumerate}

\subsection{Main Result}\label{sec:thm1_formal}

With our four quantities fully defined, we now give a formal statement of Theorem \ref{thm:tradeoff}.

\begin{theorem}[Identifiability - Falsifiability Trade-off]\label{thm:tradeoff}
  Let $\p(\yvc,\thetavc,\lambdavc)$ be an expansion of $\p_{\b}(\yvc,\thetavc)$. Let $\eta_{\p}$ be the contraction coefficient of $\p$ with respect to $\thetav$ (i.e. the contraction coefficient computed with respect to the marginal $\p(\yvc,\thetavc) = \int \p(\yvc,\thetavc,\lambdavc)d\lambdavc$). If we have $\eta_{\p} < 1$ and
  \begin{equation}
    \label{eq:tradeoff_condition}
    \expratio \geq \eta_{\p},
  \end{equation}
  then there is a strict trade-off between identifiability and falsifiability in that
  \begin{equation}
    \label{eq:id_fa_trade}
    \begin{split}
      \imim \geq \imim_{\b} &\implies \fmim \leq \fmim_{\b}, \text{ and}\\
      \fmim \geq \fmim_{\b} &\implies \imim \leq \imim_{\b}.
    \end{split}
  \end{equation}
\end{theorem}

\begin{proof}
  See the Supplementary Material \citep{SuppMat} for a full proof. For now, we briefly sketch the main ideas. First, if we suppose (falsely) that, for all models $\p$,
  \begin{equation}
    \label{eq:supposition}
    \imim_{\p} = \MI_{\p}(\yrep,\thetav\mid\yv),
  \end{equation} 
  then the conclusion \eqref{eq:id_fa_trade} would hold for any model expansion. To see this, first consider the case where $\imim \geq \imim_{\b}$. Then \eqref{eq:cmi_decomp_excess} shows that
  \[
    \fmim = -\MI_{\p}\left( \yrep, (\thetav,\lambdav)\mid\yv \right) \stackrel{\mathrm{(a)}}{\leq} -\imim \leq -\imim_{\b} \stackrel{\mathrm{(b)}}{=} -\MI_{\p_{\b}}(\yrep,\thetav\mid\yv) = \fmim_{\b},
  \]
  where $\mathrm{(a)}$ and $\mathrm{(b)}$ follow by applying \eqref{eq:supposition} to the expanded and base models, respectively. Now consider the case where $\fmim \geq \fmim_{\b}$. We now get
  \[
    \imim \stackrel{\mathrm{(a)}}{=} \MI_{\p}(\yrep,\thetav\mid\yv) \leq \MI_{\p}\left( \yrep, (\thetav,\lambdav)\mid\yv \right) = -\fmim \leq -\fmim_{\b} \stackrel{\mathrm{(b)}}{=} \imim_{\b},
  \]
  where $\mathrm{(a)}$ and $\mathrm{(b)}$ follow exactly as above. 
  In reality, the supposition \eqref{eq:supposition} does not hold. However, we demonstrate (in Lemma 10 of the Supplementary Material \citep{SuppMat}) that, for all models with $\eta_{\p}<1$, we do have the constraint
  \begin{equation}
    \label{eq:main_constraint}
    \imim_{\p}(\thetav) \geq \MI_{\p}(\yrep,\thetav\mid\yv) \geq (1 - \eta_{\p})\imim_{\p}(\thetav).
  \end{equation}
  Because of the gap in the inequality \eqref{eq:main_constraint}, an increase in $\imim$ does not guarantee a decrease in $\fmim$. The sufficient condition \eqref{eq:tradeoff_condition} essentially fills this gap, forcing the trade-off \eqref{eq:id_fa_trade}.
\end{proof}

\textit{Remarks:}
\begin{enumerate}
\item The first implication of \eqref{eq:id_fa_trade} holds even if $\eta_p = 1$.
\item The contraction coefficient $\eta_p$ can be replaced with the ratio $\eta_{\p}^* = \frac{\MI_{\p}\left(\yv,\yv'\right)}{\MI_{\p}\left(\yv,\thetav\right)}$,
  where $\yv,\yv'\stackrel{iid}{\sim} \p(\yvc\mid\thetavc)$. Since $\eta^*_{\p}$ depends on the specific prior $\p(\thetavc)$, it does not enjoy the interpretation that $\eta_{\p}$ has in terms of the sampling distribution $\p(\yvc\mid\thetavc)$. 
  However, we have that $\eta_{\p}^* \leq \eta_{\p}$, and $\eta_{\p}^* = 1$ if and only if $\yv'$ and $\thetav$ are independent given $\yv$, i.e. if additional data cannot improve our knowledge of $\thetav$. Thus, we expect $\eta_{\p}^*< 1$ in all practical problems.
\item Both $\eta_{\p}$ and $\expratio$ depend on the expanded model. However, whereas $\eta_{\p}\leq 1$, $\expratio$ is unbounded above. Thus, as long as $\eta_{\p}<1$, a sufficient condition for the trade-off \eqref{eq:id_fa_trade} is for $\expratio \geq 1$, i.e. for the sampling distribution variability due to $\lambdav$ (the EPSD) to at least match the base variability (the base PSD).
\end{enumerate}

\subsubsection{Implications for Priors}

If we wish to avoid the challenges that model expansion poses for identifiability and falsifiability, Theorem \ref{thm:tradeoff} implies that we should construct expansions with small $\expratio$. Unfortunately, due to the difficulty of computing $\expratio$ in realistic models, this advice is hardly actionable. However, an important special case occurs when
\begin{equation}
  \label{eq:good_cind}
  \yv\ind\lambdav\;\mid\;\thetav.
\end{equation}
This conditional independence implies $\yrep\ind\lambdav\mid\thetav, \yv$, and therefore $\expratio = 0$. An expansion satisfies \eqref{eq:good_cind} if $\lambdav$ only enters the prior distribution and has no direct influence on the likelihood. We will thus refer to expansions satisfying \eqref{eq:good_cind} as prior expansions. While limiting ourselves exclusively to prior expansions would be overly restrictive, we may still  benefit from combining general expansions with prior expansions. Because prior expansions can achieve both $\fmim > \fmim_{\b}$ and $\imim >\imim_{\b}$ (see Section \ref{ex:4} for an example of this), combining them with general expansions may limit the overall downward influence of model expansion on $\imim$ and $\fmim$.

Prior expansions that use $\lambdav$ to impose a soft constraint on the complexity of the model are particularly useful for this purpose. In regression, for example, we could expand from an i.i.d. prior on the coefficients to a sparsifying prior, where $\lambdav$ limits the number of ``large'' coefficients. More generally, penalized complexity priors provide an explicit framework for constructing prior distributions $\p(\thetavc,\lambdavc)$ that limit departure from a simpler baseline model \citep{PCPriors}.

We also emphasize that while prior expansions usually encode nontrivial prior information, condition \eqref{eq:good_cind} does not require priors to be \textit{marginally} informative about any (scalar) parameter. For instance, suppose that the parameter vector $\thetav$ contains some particular parameters $\subtheta$ of substantive interest for which we want our inference to be minimally influenced by our choice of prior. The framework of reference priors defines a precise sense in which a prior $\p(\subthetac)$ can be minimally informative about $\subtheta$ \citep{RefPrior,RefPriorAsymp}. Because the reference prior for $\subtheta$ is defined marginally, we could construct a prior expansion using a reference prior on $\subtheta$ while choosing the conditional prior $\p(\lambdavc\mid \thetavc)$ freely (e.g. in a manner that limits model complexity).

\subsection{Alternative Interpretations of \texorpdfstring{$\imim$}{\imi} and \texorpdfstring{$\fmim$}{\fmi}}\label{sec:alt_interp}

Before presenting examples, we examine some additional interpretations of $\imim$ and $\fmim$.
\subsubsection{Measures of Generalizability}

As we discussed in Section \ref{sec:intro}, the power deficiency of posterior predictive tests can be explained by overfitting. It is thus unsurprising that the \fmi can be associated with a measure of out-of-sample generalizability, the expected log predictive density (ELPD):
\begin{equation}
  \mathrm{ELPD}(\yvc, \p_*) = \E_{\yval \sim \p_*} \Big[\log \p(\yv^{\mathrm{val}}\mid\yv = \yvc)\Big].
\end{equation}
Here, $\p_*(\cdot)$ is the density of the true data generating distribution, and $f(y^{\mathrm{val}}\mid \yvc)$ is the posterior predictive density. The ELPD measures how well we expect a fitted Bayesian model to predict the values in a hypothetical validation dataset $\yv^{\mathrm{val}}$ drawn from the same distribution as the observed $\yv$. The ELPD cannot be evaluated directly, but many model evaluation metrics have been proposed to approximate it under appropriate assumptions, including the WAIC \citep{LooWAIC} and approximate leave-one-out cross validation \citep{LOO}.

If we assume that the model is correctly specified, so that $\p_*(\cdot) = \p(\cdot\mid \absparamc)$ for some fixed $\absparam$, then we can relate the ELPD to the \fmi:
\begin{equation}
  \label{eq:elpd_fmi}
  \fmim(\absparam) = \E \Big[ \mathrm{ELPD}(\yv,\p(\cdot\mid\absparam)) \Big] + h\left(\yv\mid\absparam\right).
\end{equation}
The entropy term $h\left(\yv\mid\absparam\right)$ can be viewed as a measure of irreducible error -- unpredictability in the data generating process that remains after determination of all model parameters. Holding this entropy fixed (or at least nondecreasing), \eqref{eq:elpd_fmi} tells us that lower values of the \fmi are also associated with lower ELPD and hence worse out-of-sample generalizability (even if the model is correctly specified).

\subsubsection{Bayes Factors}

The Bayes factor is another common tool for model evaluation and comparison. For two models, $\p_1$ and $\p_2$ and a particular value of the observed data $\yvc$, the Bayes factor is the ratio of the marginal likelihoods: $\p_1(\yvc) / \p_2(\yvc)$. Despite the similarities in purpose between Bayes factors and posterior predictive $p$-values, the Bayes factor is \textit{not} directly connected to the \fmi. While the Bayes factor is an a priori comparison of models (i.e. the Bayes factor does not condition on the observed data), the \fmi relates to the performance of a posteriori model evaluations (including both posterior predictive $p$-values and ELPD approximations like cross validation).

Bayes factors can, however, be related to the \imi for certain choices of models $\p_1$ and $\p_2$. Consider a base model $\p_{\b}(\yvc,\thetavc)$ and a hypothesis that $\subtheta = \subthetac$, where $\subtheta$ is some subset of the total parameter vector $\thetav$ and $\subthetac$ is a particular value. To test this hypothesis, we construct the Bayes factor:
\begin{equation}
  \label{eq:bayes_factor_base}
  \frac{\p_{\b}(\yvc\mid \subtheta = \subthetac)}{\p_{\b}(\yvc)}.
\end{equation}
If we expand $\p_{\b}$ to a larger model $\p(\yvc,\thetavc,\lambdavc)$, our Bayes factor then becomes
\begin{equation}
  \label{eq:bayes_factor_exp}
  \frac{\p(\yvc\mid \subtheta= \subthetac)}{\p(\yvc)}.
\end{equation}
We note that while the hypothesis ($\subtheta = \subthetac$) is the same, both numerator and denominator have changed, since we now marginalize out $\lambdav$. 
If we write $\thetav = (\subtheta, \overline{\subtheta})$, then we have that $\p(\yvc, \subthetac, \lambdavc)$ is an expansion of $\p_{\b}(\yvc, \subthetac)$ (since integrating out $\overline{\subthetac}$ preserves the property \eqref{eq:expansion_def}). Thus, our results can be applied to the mutual informations $\imim_{\b}(\subtheta)$ and $\imim(\subtheta)$. In particular, using the KL divergence representation \eqref{eq:MI_KL}, we can write
\begin{equation}
  \label{eq:kl_bf}
  \begin{split}
  \imim_{\b}(\subtheta) &= \E\left[ \E \left[ \log\frac{\p_{\b}(\yv\mid \subtheta)}{\p_{\b}(\yv)}\;\Bigg\vert\; \subtheta \right] \right],\\
   \imim(\subtheta) &= \E\left[ \E \left[ \log\frac{\p(\yv\mid \subtheta)}{\p(\yv)}\;\Bigg\vert\; \subtheta \right] \right].
  \end{split}
\end{equation}
If the IMI decreases ($\imim(\subtheta)< \imim_{\b}(\subtheta)$), then \eqref{eq:kl_bf} implies that, on average over possible hypotheses $\subthetac$, the Bayes factor is expected to fall with expansion even if the simpler model is correct (since the inner expectations are taken over the simpler model). 

\section{Example Computations}\label{sec:id_fa_expand}

We now present three examples which illustrate the conclusions of Theorem \ref{thm:tradeoff}. Throughout, we work with the lower bound $\eta^*_{\p}$ in place of $\eta_{\p}$ as the former is much simpler to compute (see remarks after Theorem \ref{thm:tradeoff}). Detailed computations for all information-theoretic quantities can be found in the Supplementary Material \citep{SuppMat}.

\subsection{Example 1: Linear Regression}\label{ex:1}

\subsubsection{Models}
Take as our base model a linear regression with predictors $\Xm_{\b}\in\RR^{n\times k}$ (with standardized, unit-norm columns) and known, unit noise variance. Specifically, we take:
\begin{equation}
  \label{eq:ex3_reg_eq}
  \yv\mid \thetav \sim \mathrm{normal}\left(\Xm_{\b}\thetav, \imat_n\right),\quad \thetav \sim\mathrm{normal}\left(0, \tau^{-1}\imat_k\right).
\end{equation}

We expand this regression by adding a new predictor $\xv_{k+1}$ (again with $\|\xv_{k+1}\| = 1$). Denoting $\Xm = [\Xm_{\b}\;\xv_{k+1}]$ and letting $\lambdav$ be the coefficient for $\xv_{k+1}$, the expansion is
\begin{equation}
  \label{eq:ex3_exp_eq}
  \yv\mid\thetav,\lambdav \sim \mathrm{normal}\left(\Xm
    \left[\begin{smallmatrix}
      \thetav\\\lambdav
    \end{smallmatrix}\right]
    , \imat_{n}\right),\quad \left[\begin{smallmatrix}
      \thetav\\\lambdav
    \end{smallmatrix}\right] \sim\mathrm{normal}\left(0, \tau^{-1}\imat_{k+1}\right).
\end{equation}

\subsubsection{Effect on Identifiability/Falsifiability}
Before computing $\imim$ and $\fmim$, we build intuition using just expectations and variances.
First, for any scalar coefficient $[\thetav]_i$ ($1 \leq i\leq k$), we can decompose $\mathrm{Var}_{p}\left([\thetav]_i\mid\yv=\yvc\right)$ as
\begin{multline}
  \label{eq:exp_post_var}
  \mathrm{Var}_{\p}\Big([\thetav]_i\;\Big\vert\;\yv=\yvc\Big) =\\ \E\Big[ \mathrm{Var}\Big( [\thetav]_i\;\Big\vert\; \yv,\lambdav \Big)\;\Big\vert\; \yv=\yvc \Big] + \mathrm{Var}\Big( \E\Big[ [\thetav]_i \;\Big\vert\; \yv,\lambdav\Big]\;\Big\vert\; \yv=\yvc\Big).
\end{multline}
Because $\p([\thetavc]_i,\lambdavc\mid\yvc)$ is jointly normal, $\mathrm{Var}\left( [\thetav]_i\mid \yv = \yvc,\lambdav=\lambdavc \right)$ is free of $\lambdavc$, and we therefore have
\begin{multline}
  \label{eq:base_var_rel}
  \E\Big[ \mathrm{Var}\Big( [\thetav]_i\;\Big\vert\; \yv,\lambdav \Big) \;\Big\vert\; \yv = \yvc\Big] =\\ \mathrm{Var}\Big( [\thetav]_i\mid \yv = \yvc,\lambdav = \lambdavc_0 \Big) = \mathrm{Var}_{\p_{\b}}\Big( [\thetav]_i\;\Big\vert\; \yv=\yvc\Big).
\end{multline}
Combining \eqref{eq:base_var_rel} with \eqref{eq:exp_post_var}, we conclude that $\mathrm{Var}_{\p}\left([\thetav]_i\mid\yv = \yvc\right) \geq \mathrm{Var}_{\p_{\b}}\left( [\thetav]_i\mid \yv=\yvc\right)$, and we thus expect the expanded model to exhibit worse identifiability and lower $\imim$.

To understand the effect of expansion on falsifiability, consider new observations $\yv^{\mathrm{rep}}_1$ and $\yv^{\mathrm{rep}}_2$ with all base predictors equal. In this case, we have 
\begin{equation}
  \label{eq:base_preds_reg}
  \mathrm{Var}_{\p_{\b}}\Big( \E_{\p_{\b}}\Big[ \yv^{\mathrm{rep}}_1\;\Big\vert\;\thetav \Big] - \E_{\p_{\b}} \Big[ \yv^{\mathrm{rep}}_2\;\Big\vert\; \thetav \Big] \;\Big\vert\; \yv = \yvc\Big)= \mathrm{Var}\left( 0 \;\Big\vert\; \yv=\yvc\right) = 0.
\end{equation}
Now if $x^{\mathrm{rep}}_{i, k+1}$ is the value of the new predictor corresponding to $y^{\mathrm{rep}}_i$ for $i=1,2$, then
\begin{align}
  \mathrm{Var}_{\p}&\Big( \E_{\p}\Big[ \yv^{\mathrm{rep}}_1\;\Big\vert\;\thetav,\lambdav \Big] - \E_{\p} \Big[ \yv^{\mathrm{rep}}_2\;\Big\vert\; \thetav,\lambdav \Big] \;\Big\vert\; \yv
  = \yvc\Big)\nonumber\\ &= \left( x^{\mathrm{rep}}_{1, k+1} - x^{\mathrm{rep}}_{2, k+1} \right)^2\mathrm{Var}_{\p}\Big(  \lambdav\;\Big\vert\;\yv = \yvc\Big) \geq 0.\label{eq:exp_preds_reg}
\end{align}
In other words, the expanded model has more predictive flexibility than the base model, giving distinct predictions for observations which the base model treats equally. Since predictive flexibility is associated with worse falsifiability, we therefore expect $\fmim \leq \fmim_{\b}$.
Explicit computations confirm our predictions for $\imim$ and $\fmim$:
\begin{equation}
  \label{eq:ex3_mi_rel}
  \begin{split}
    \imim &= \imim_{\b} - \frac{1}{2}\log\left[\frac{1 + \tau}{\pi(\xv_{k+1},\tau) + \tau}\right],\\
    \fmim &= \fmim_{\b} - \frac{1}{2}\log\left[\frac{2\pi(\xv_{k+1},\tau/2) + \tau}{\pi(\xv_{k+1},\tau) + \tau}\right],
  \end{split}
\end{equation}
where we define the quantity $\pi$ as
\begin{equation}
  \label{eq:ex3_pi_def}
  \pi(\xv_{k+1}, \tau) = \xv_{k+1}^T\left(\imat - \Xm_{\b}\left(\Xm_{\b}^T\Xm_{\b} + \tau\imat\right)^{-1}\Xm_{\b}^T\right)\xv_{k+1}.
\end{equation}
Since $\|\xv_{k+1}\|^2=1$, \eqref{eq:ex3_pi_def} corresponds to the quantity $\pi$ in \eqref{eq:reg_ex_proj} from our initial regression example as the precision $\tau$ tends to $0$. As long as $\pi(\xv_{k+1},\tau)\in (0,1)$ for $\tau\geq 0$, we have
\begin{equation}
  \label{eq:asymp_reg}
  \log\left[\frac{1 + \tau}{\pi(\xv_{k+1},\tau) + \tau}\right]\ > 0, \;\text{and}\; \log\left[\frac{2\pi(\xv_{k+1},\tau/2) + \tau}{\pi(\xv_{k+1},\tau) + \tau}\right]\to \log 2 > 0 \text{ as } \tau \to 0.
\end{equation}
Thus, for a sufficiently noninformative prior, we have $\imim < \imim_{\b}$ and $\fmim < \fmim_{\b}$, as expected. We can also use \eqref{eq:ex3_mi_rel} to quantify the trade-off between $\imim$ and $\fmim$:
\begin{equation}
  \label{eq:reg_tradeoff}
  \left(\imim - \imim_{\b}\right) - \left(\fmim - \fmim_{\b}\right) = \frac{1}{2}\log\left[\frac{2\pi(\xv_{k+1},\tau/2) + \tau}{1 + \tau}\right] = \delta(\pi(\xv_{k+1}, \tau/2), \tau),
\end{equation}
where $\delta(\pi, \tau) = \log[(2\pi + \tau) / (1 + \tau)] / 2$.
The first panel of Figure \ref{fig:ex1_fig} plots $\delta(\pi, \tau)$ against $\pi$ for $\tau = 1/4$ (i.e. for $\mathrm{SD}([\thetav]_i) = 2$). 
We observe that $\pi(\xv_{k+1},\tau / 2)$ defines a continuous trade-off --- larger values yield relatively better $\imim$, smaller values relatively better $\fmim$.

In the special case of orthonormal $\Xm_{\b}$, the function $\pi$ takes a simpler form:
\begin{equation}
  \label{eq:reg_pi_simple}
  \pi\left(\xv_{k+1}, \tau\right) = 1 - \frac{\|\Xm_{\b}^T\xv_{k+1}\|^2}{1 + \tau}.
\end{equation}

Plugging this into \eqref{eq:ex3_mi_rel}, it is easy to see that $\imim \leq \imim_{\b}$ and $\fmim \leq \fmim_{\b}$ for all $\tau$. In this case, both $\imim - \imim_{\b}$ and $\fmim - \fmim_{\b}$ are functions of just $\pi(\xv_{k+1}, \tau / 2)$. These are plotted in the second panel of Figure \ref{fig:ex1_fig}. Note that these curves mirror the trade-off between SR and PR observed in the introductory regression example in Section \ref{sec:reg_ex}.

\begin{figure}[t]
  \centering
  \includegraphics[scale=0.64]{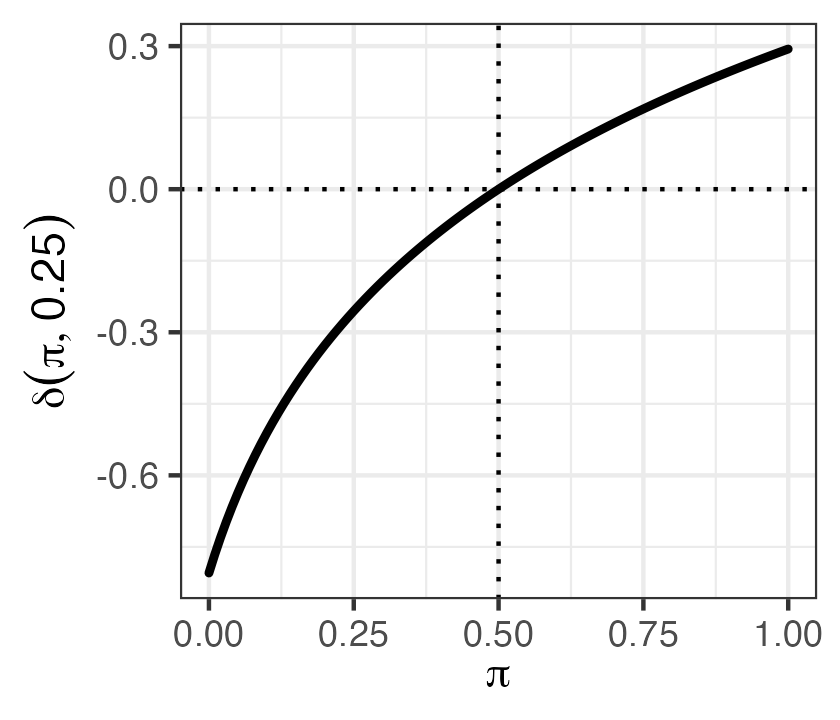}
  \includegraphics[scale=0.64]{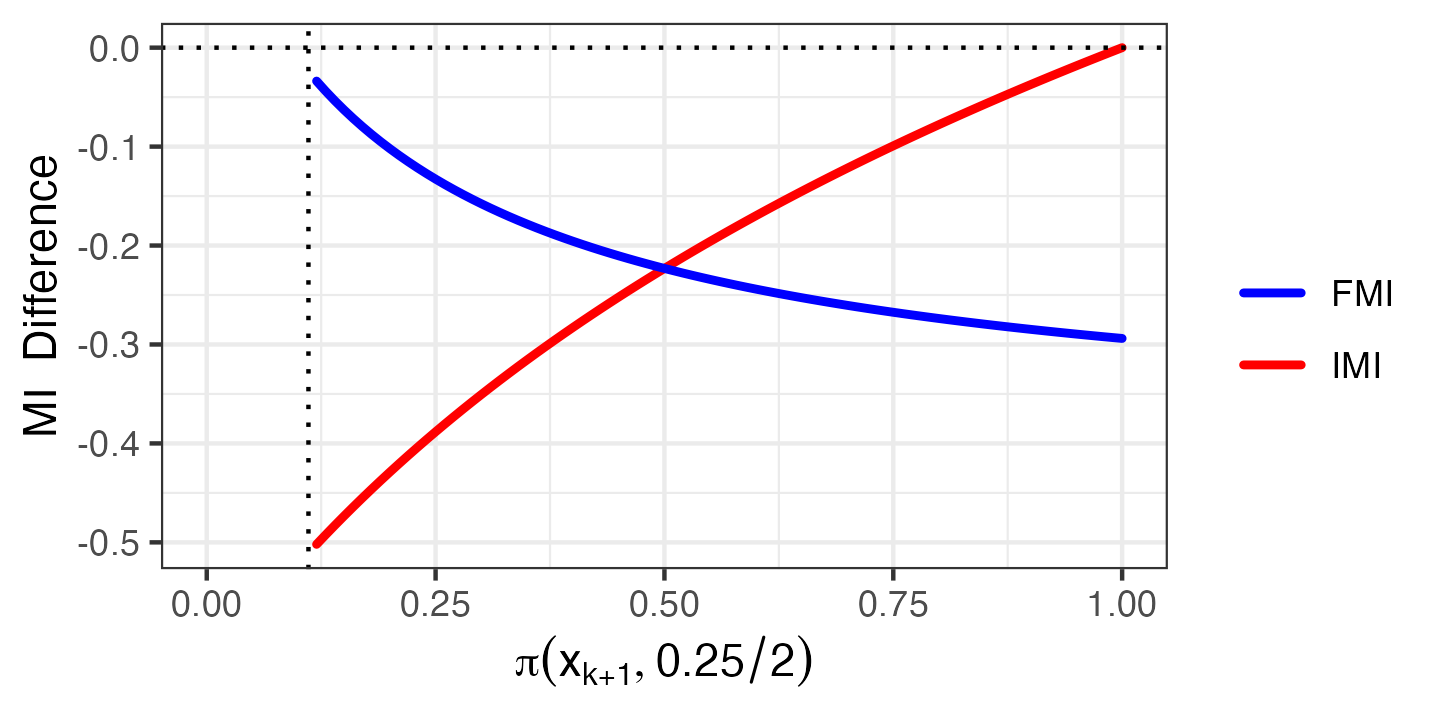}
  \caption{Left: Comparison function $\delta(\pi, \tau)$ for $\tau = 1/4$. We see a greater fall in \imi when $\delta <0$, and a greater fall in \fmi when $\delta > 0$. Right: Change in \imi (red) and \fmi (blue) from base to expanded model, plotted against $\pi(\xv_{k+1}, 0.25 / 2)$ in the special case of orthonormal $\Xm_{\b}$. When $\tau = \frac{1}{4}$, $\pi(\xv_{k+1}, \tau / 2)$ is constrained to $\left[1/9, 1\right]$.}
  \label{fig:ex1_fig}
\end{figure}

\subsubsection{Comparison with Theorem \ref{thm:tradeoff}}

In the case where $\Xm_{\b}^T\Xm_{\b} = \imat$, Theorem \ref{thm:tradeoff} holds trivially for $\pi < 1$ (since both $\imim-\imim_{\b}<0$ and $\fmim-\fmim_{\b}<0$). For $\pi = 1$, however, $\imim-\imim_{\b} = 0$ (see second panel of Figure \ref{fig:ex1_fig}), so the theorem holds nontrivially.
However, this conclusion is not always predicted by Theorem \ref{thm:tradeoff}. Suppose $n \geq k+1$ and all predictors (including $\xv_{k+1}$) are orthonormal, so that $\pi(\xv_{k+1}, \tau) = 1$ regardless of $\tau$. In this case, we have
\begin{equation*}
  \expratio = \frac{1}{k},\qquad \eta^*_{\p} = 2 - \frac{\log(1 + 2\tau^{-1})}{\log(1+\tau^{-1})}.
\end{equation*}

If the base model contains only a single predictor (i.e. $k=1$), then we will have $\expratio > \eta^*_{\p}$ for any finite prior precision $\tau>0$. On the other hand, for any fixed $\tau$, we can always make $\expratio < \eta^*_{\p}$ for $k$ large enough. In this case, the condition \eqref{eq:tradeoff_condition} of Theorem \ref{thm:tradeoff} will fail, but the conclusion \eqref{eq:id_fa_trade} will continue to hold (nontrivially since $\pi = 1$). 
This demonstrates that the condition \eqref{eq:tradeoff_condition} is sufficient but not necessary for a trade-off between the \fmi and \imi to occur. More specifically, for any fixed value of $\tau$, we can say that there is some number of predictors $k$ in the base model such that adding just one more predictor is ``too small'' an expansion for Theorem \ref{thm:tradeoff} to kick in.
   
\subsection{Example 2: Unknown Variance}\label{ex:3}

\subsubsection{Models}
For $\yv\in\RR^n$, we define the normal-normal base model:
   \begin{equation}
     \label{eq:ex4_base}
     [\yv]_i\mid\thetav \stackrel{iid}{\sim}\mathrm{normal}\left(\thetav, 1\right) \text{ for } 1\leq i\leq n,\quad \thetav \sim\mathrm{normal}\left(0, 1\right).
   \end{equation}
   We expand this model by letting the variance $\lambdav=\mathrm{Var}([\yv]_i)$ be unknown:
   \begin{equation}
     \label{eq:ex4_exp}
     \begin{gathered}
     [\yv]_i\mid\thetav,\lambdav\stackrel{iid}{\sim}\mathrm{normal}\left( \thetav,\lambdav \right) \text{ for } 1\leq i\leq n,\\
     \thetav \sim\mathrm{normal}(0 ,1), \quad\lambdav\sim\mathrm{gamma}(2\mu_{\lambdav},2).
     \end{gathered}
   \end{equation}
   Here, the prior mean and variance of $\lambdav$ is given as $\E[\lambdav] = \mu_{\lambdav}$ and $\mathrm{Var}(\lambdav) = \mu_{\lambdav}/2$.  Conditioning on $\lambdav = 1$ recovers the base model \eqref{eq:ex4_base}.

\subsubsection{Effect on Identifiability/Falsifiability}
For the base model, we have $\mathrm{Var}_{\p_{\b}}(\thetav\mid \yv = \yvc) = (1+n)^{-1}$. In the expanded model, by the law of total variance, we get
\begin{align}
  \mathrm{Var}_{\p}\Big( \thetav\;\Big\vert\;\yv=\yvc \Big) &= \E\Big[ \mathrm{Var}\Big( \thetav\;\Big\vert\; \yv,\lambdav \Big) \;\Big\vert\; \yv = \yvc \Big] + \mathrm{Var}\Big( \E\Big[ \thetav\;\Big\vert\;\yv,\lambdav \Big]\;\Big\vert\; \yv=\yvc\Big)\nonumber\\
  &= \E\Big[ \left( 1 + n\lambdav^{-1} \right)^{-1} \;\Big\vert\; \yv=\yvc\Big] + \left( \overline{\yvc}^2 \right)\mathrm{Var} \Big[ (\lambdav + n)^{-1}\;\Big\vert\; \yv=\yvc \Big].\label{eq:ex2_post_var_cond}
\end{align}
We expect $\mathrm{Var} \left[ (\lambdav + n)^{-1}\mid\yv=\yvc \right]\approx 0$ when $\mu_{\lambdav}\uparrow\infty$ (since $(\lambdav + n)^{-1} \approx 0$) and when $\mu_{\lambdav}\downarrow 0$ (since $(\lambdav+n)^{-1}\approx n^{-1}$). Combining this with the fact that $\lambda\mapsto\left( 1 + n\lambda^{-1} \right)^{-1}$ is increasing with range $(0,1)$, we approximate
\begin{equation}
  \label{eq:ex2_id_conc1}
  \begin{split}
  \mathrm{Var}\Big( \thetav\;\Big\vert\;\yv = \yvc \Big) &\approx \E\left[ \left( 1 + n\lambdav^{-1} \right)^{-1}\;\Big\vert\; \yv=\yvc \right]\\ &\approx \begin{cases}
    0 < \mathrm{Var}_{p_{\b}}(\thetav\mid \yv = \yvc), & \mu_{\lambdav} \ll 1\\
    1 > \mathrm{Var}_{p_{\b}}(\thetav\mid \yv=\yvc), & \mu_{\lambdav} \gg 1.
  \end{cases}
\end{split}
\end{equation}
Therefore, we predict $\imim<\imim_{\b}$ for $\mu_{\lambdav}$ large, and $\imim > \imim_{\b}$ for $\mu_{\lambdav}$ small. 

While, in the previous example, $\lambdav$ primarily influenced the location of the data, $\lambdav$ now primarily influences the scale of the data. Consequently, we must account for this scale explicitly when considering falsifiability. To do this, we study the ratio
\begin{equation}
  \label{eq:ex2_false_ratio}
  \frac{\mathrm{Var}(\E\left[ \yv^{\mathrm{rep}}\mid \thetav,\lambdav \right]\mid \yv = \yvc)}{\mathrm{Var}(\yv^{\mathrm{rep}}\mid \yv=\yvc)},
\end{equation}
i.e. the ratio of the spread of predicted values to the overall spread of replicated or future data given observed data $\yv$. Larger values of \eqref{eq:ex2_false_ratio} indicate a greater variety of predictions, which we associated with lower $\fmim$. Applying the law of total variance to the denominator, and recognizing $\E\left[ \yv^{\mathrm{rep}}\mid \thetav,\lambdav \right] = \thetav$ and $\mathrm{Var}\left[ \yv^{\mathrm{rep}}\mid \thetav,\lambdav \right] = \lambdav$, we reexpress \eqref{eq:ex2_false_ratio} for $\mu_{\lambdav}$ small or large as
\begin{equation}
  \label{eq:ex2_false_ratio2}
  \frac{\mathrm{Var}(\thetav\mid\yv = \yvc)}{\mathrm{Var}(\thetav\mid\yv = \yvc) + \E\left[ \lambdav \mid \yv=\yvc \right]} \approx \frac{\E\left[ (1 + n\lambdav^{-1})^{-1} \mid \yv= \yvc \right]}{\E\left[ (1 + n\lambdav^{-1})^{-1} \mid \yv = \yvc \right] + \E\left[ \lambdav \mid \yv = \yvc \right]},
\end{equation}
where the approximation follows by plugging in \eqref{eq:ex2_post_var_cond} for $\mathrm{Var}(\thetav\mid\yv = \yvc)$ with the assumption $\mathrm{Var}\left[ (\lambda + n)^{-1}\mid \yv=\yvc \right] \approx 0$ as above.
Since $\lambdavc \mapsto (1 + n\lambdavc^{-1})^{-1}$ is sublinear, and $\mu_{\lambdav} = \E\lambdav$, we expect \eqref{eq:ex2_false_ratio2} and \eqref{eq:ex2_false_ratio} to be decreasing in $\mu_{\lambdav}$. Thus, we predict $\fmim < \fmim_{\b}$ for $\mu_{\lambdav} \ll 1$ and $\fmim > \fmim_{\b}$ for $\mu_{\lambdav} \gg 1$.

While exact calculation of $\imim$ and $\fmim$ is not possible in this model, the low dimensional parameter space allows accurate numerical approximation. For $n=2$, Figure \ref{fig:ex3_fig} plots $\imim - \imim_{\b}$ and $\fmim - \fmim_{\b}$ against $\mu_{\lambdav}$. As in the previous example, and as expected from our calculations, $\imim$ and $\fmim$ trade off as $\mu_{\lambdav}$ varies. But unlike in Section \ref{ex:1}, we have $\imim > \imim_{\b}$ for $\mu_{\lambdav}$ small and $\fmim > \fmim_{\b}$ for $\mu_{\lambdav}$ large, as we predicted. 

  \begin{figure}[t]
     \centering
     \includegraphics[scale=0.66]{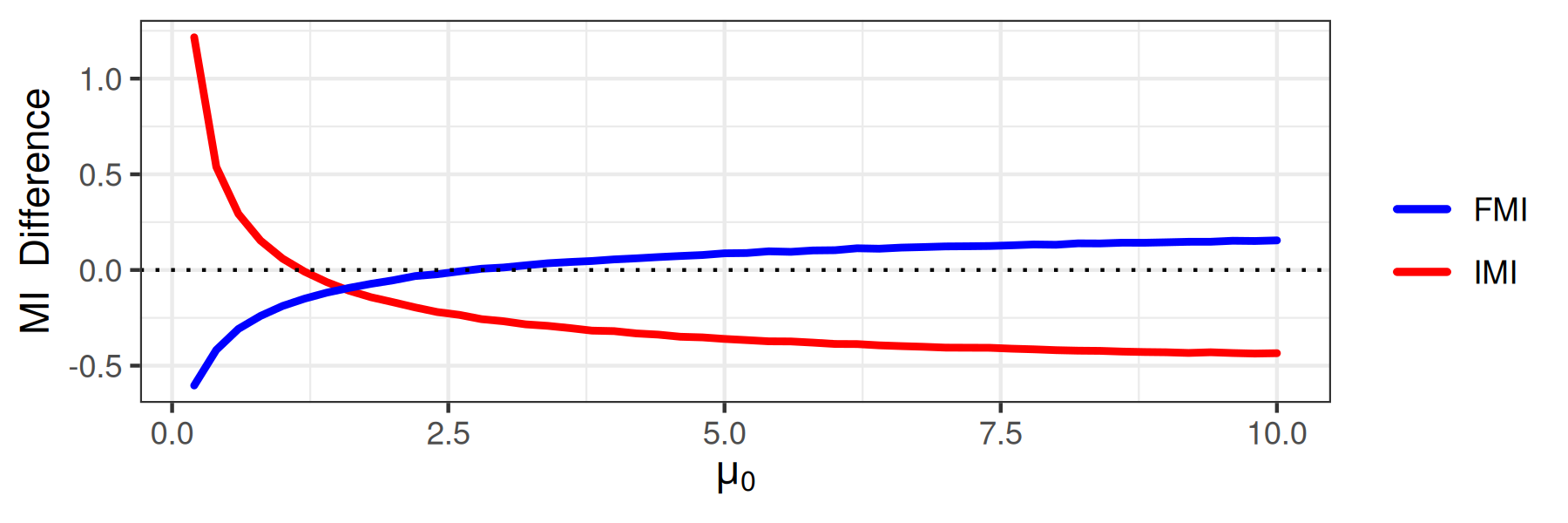}
     \caption{Change in \imi (red) and \fmi (blue) from base to expanded model against $\mu_{\lambdav}$. \imi increases relative to the base model for $\mu_{\lambdav}$ smaller and decreases for $\mu_{\lambdav}$ larger, whereas \fmi increases relative to base model for $\mu_{\lambdav}$ larger and decreases for $\mu_{\lambdav}$ smaller.}
     \label{fig:ex3_fig}
   \end{figure}

\subsubsection{Comparison with Theorem \ref{thm:tradeoff}}

   \begin{figure}[t]
  \centering
  \includegraphics[scale=0.2]{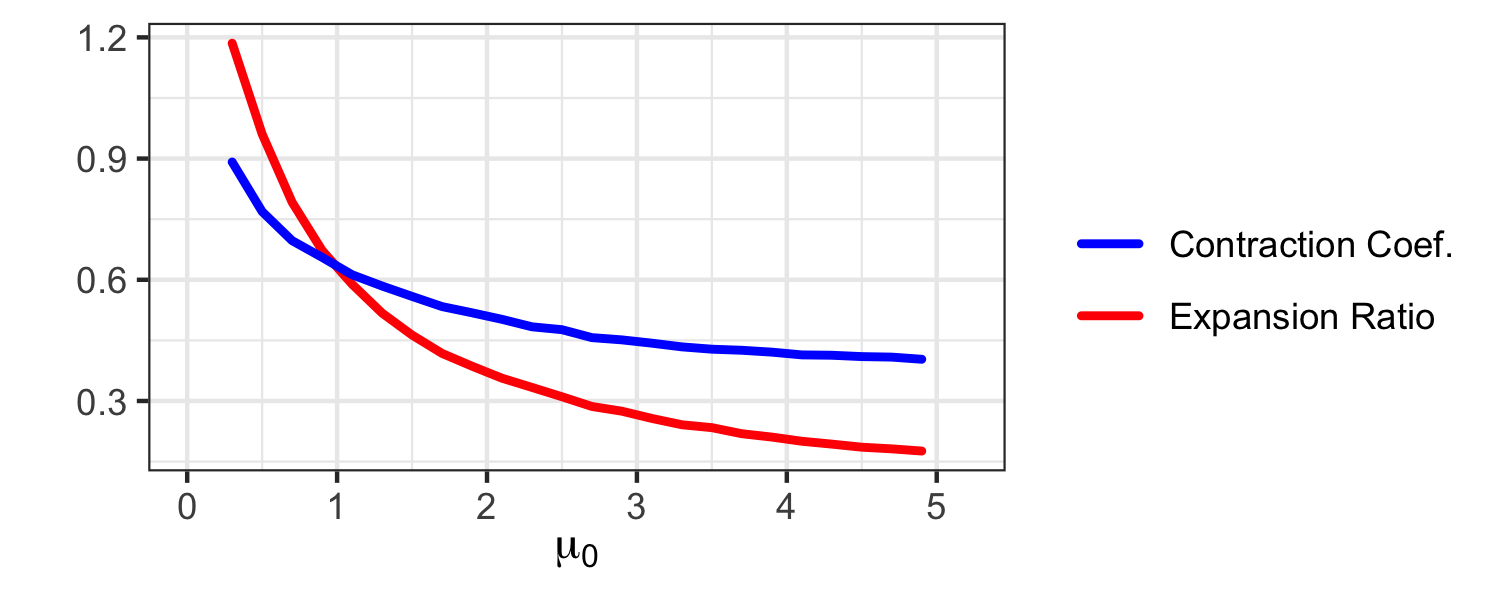}
  \caption{$\expratio$ (red) and $\eta^*_{\p}$ (blue) against $\mu_{\lambdav} = \E \mathrm{Var}([\yv]_i\mid\thetav, \lambdav)$. For $\mu_{\lambdav}$ sufficiently small, $\expratio > \eta_{\p}$, and Theorem \ref{thm:tradeoff} implies a trade-off between the \imi and \fmi. For larger $\mu_{\lambdav}$, $\expratio < \eta_{\p}$, and Theorem \ref{thm:tradeoff} does not apply.}
  \label{fig:ex3_tradeoff}
  \end{figure}

  Since at least one of $\imim < \imim_{\b}$ and $\fmim < \fmim_{\b}$ holds for all $\mu_{\lambdav}$, we again have that all expanded models satisfy the conclusions of Theorem \ref{thm:tradeoff}. For small or large values of $\mu_{\lambdav}$, only one of these inequalities holds, and the conclusions are thus satisfied nontrivially.
And as in the previous example, we find that Theorem \ref{thm:tradeoff} predicts this trade-off for some but not all $\mu_{\lambdav}$. In Figure \ref{fig:ex3_tradeoff}, we see that $\expratio > \eta^*_p$ for $\mu_{\lambdav} \leq 1$ (and so the condition \eqref{eq:tradeoff_condition} correctly predicts the trade-off). However, we have $\expratio < \eta^*_{\p}$ for $\mu_{\lambdav} > 1$ (and so the trade-off holds without the condition \eqref{eq:tradeoff_condition}).

\subsection{Example 3: Hierarchical Prior}\label{ex:4}

\subsubsection{Models}
  Consider data $\yv = [\yv^1\;\yv^2]$ partitioned into groups, $\yv^1,\yv^2\in\RR^n$. We define a base model with separate means for each group.
  \begin{equation}
     \label{eq:ex_counter_base}
     \begin{split}
       [\yv^j]_i\mid\thetav&\stackrel{iid}{\sim}\mathrm{normal}\left([\thetav]_j, 1\right) \text{ for } 1\leq i\leq n\text{ and } j=1,2,\\
       [\thetav]_1,[\thetav]_2 &\stackrel{iid}{\sim} \mathrm{normal}\left(0, \sigma^2_0\right).
     \end{split}
   \end{equation}
   We expand this model by allowing the correlation $\lambdav = \mathrm{Cor}\left( [\thetav]_1,[\thetav]_2 \right)$ to be positive:
   \begin{equation}
     \label{eq:ex_counter_exp}
     \thetav\mid\lambdav \sim\mathrm{normal}\left(0,\frac{\sigma_0^2}{\sqrt{1-\lambdav^2}}\left[
         \begin{matrix}
           1 & \lambdav\\
           \lambdav & 1
         \end{matrix}
       \right]\right), \qquad \lambdav \sim\mathrm{Beta}\left(99\mu_{\lambdav}, 99(1-\mu_{\lambdav})\right).
   \end{equation}
   The likelihood is unchanged, and the prior on $\thetav$ is parametrized so that the entropy $h_{\p(\thetav\mid\lambdav = \lambdavc)}(\thetav)$ is independent of $\lambdavc$.
   In this parametrization, $\lambdav$ has prior mean $\mu_{\lambdav}$ and standard deviation $\sqrt{\mu_{\lambdav} (1-\mu_{\lambdav})} / 10$. Conditioning on $\lambdav = 0$ recovers the base model.

\subsubsection{Effect on Identifiability/Falsifiability}

In the base model, $[\thetav]_i$ is only informed by $\yv^i$ for $i=1,2$. In the expanded model however, conditioning on any $\lambdav>0$, $\yv^1$ is informative about $[\thetav]_2$ and $\yv^2$ about $[\thetav]_1$. This data sharing between the two groups, which increases with $\lambdav$, improves identification of both means by increasing the amount of data which can be used to estimate them. We check this intuition by partially computing $\imim$, finding that
\begin{equation}
  \label{eq:counter_imim_form}
  \imim = \E\Bigg[ \underbrace{\frac{1}{2}\log\left(\frac{(1 + n\sigma_0^2\sqrt{1 - \lambdav^2})^2 - \lambdav^2}{1 - \lambdav^2}\right)}_{\phi(\lambdav)}\Bigg] + \MI\left(\lambdav, \yv\right).
\end{equation}
The function $\phi(\lambdavc)$ measures the identifiability of $\thetav$ conditional on $\lambdav$. As we expect, $\phi(\lambdavc)$ is increasing in $\lambdavc$. Furthermore, $\phi(0) = \imim_{\b}$, so \eqref{eq:counter_imim_form} implies that
\begin{equation}
  \imim\geq \E [\phi(\lambdav)] \geq \phi(0) = \imim_{\b},
\end{equation}
 where we have used that $\MI\left(\lambdav,\yv\right)\geq 0$. Numerically estimating $\MI\left(\lambdav,\yv\right)$, we plot $\imim - \imim_{\b}$ against $\mu_{\lambdav} = \E\lambdav$ in Figure \ref{fig:ex_counter_fig}, which confirms that $\imim - \imim_{\b}\geq 0$ increases with $\mu_{\lambdav}$.

Turning to $\fmim$, we note that $[\thetav]_1$ and $[\thetav]_2$ are a priori independent in the base model. In the expanded model, as $\lambdav\uparrow 1$, we will have $[\thetav]_1 \approx [\thetav]_2$ with overwhelming probability, effectively reducing the degrees of freedom in specifying the sampling distribution from 2 to nearly 1. Because falsifiability is directly connected to sampling distribution variety (as we argued in Section \ref{sec:quantity_defs}), we expect $\fmim$ to increase with $\mu_{\lambdav}$.
As above, we check this by decomposing $\fmim$:
\begin{equation}
  \label{eq:counter_fmim_form}
  \fmim = \E\Bigg[\underbrace{\frac{1}{2}\log\left(\frac{[1 + n\sigma_0^2\sqrt{1 - \lambdav^2}]^2 - \lambdav^2}{[1 + 2n\sigma_0^2\sqrt{1 - \lambdav^2}]^2 - \lambdav^2}\right)}_{\psi(\lambdav)}\Bigg] - \MI\left(\lambdav, \yrep\mid \yv\right).
\end{equation}
Now the function $\psi(\lambdavc)$ measures the posterior variety of sampling distributions conditional on $\lambdav$. For $\sigma_0 = 1$, $\psi(\lambdavc)$ again increases with $\lambdavc$, as expected. (In general, $\psi(\lambdavc)$ will be increasing for $n\sigma_0^2$ sufficiently large.) As before, we also have $\psi(0) = \fmim_{\b}$.

But unlike the analysis of $\imim$, these properties do not establish $\fmim \geq \fmim_{\b}$, as the mutual information on the right-hand side of \eqref{eq:counter_fmim_form} is now \textit{subtracted}. This reflects a small defect in the intuition sketched above. Increasing $\lambdav$ does effectively reduce the degrees of freedom in specifying $\thetav$, but adding $\lambdav$ as a new parameter also adds a degree of freedom to the model. 
However, we may still expect our original intuition to be close to correct --- since $\lambdav$ is independent of $\yv$ given $\thetav$, the added degree of freedom should be ``small''. 

To confirm this, we numerically estimate $\MI\left(\lambdav, \yrep\mid \yv\right)$ and plot $\fmim - \fmim_{\b}$ against $\mu_{\lambdav}$ in Figure \ref{fig:ex_counter_fig}. We again see $\fmim - \fmim_{\b}$ increasing with $\mu_{\lambdav}$, and exceeding $0$ for $\mu_{\lambdav}$ sufficiently large, as guessed. Because both \imi and \fmi increase in the expanded model for $\mu_{\lambdav}$ large enough, the trade-off \eqref{eq:id_fa_trade} does not occur in this example.

    \begin{figure}[t]
     \centering
     \includegraphics[scale=0.66]{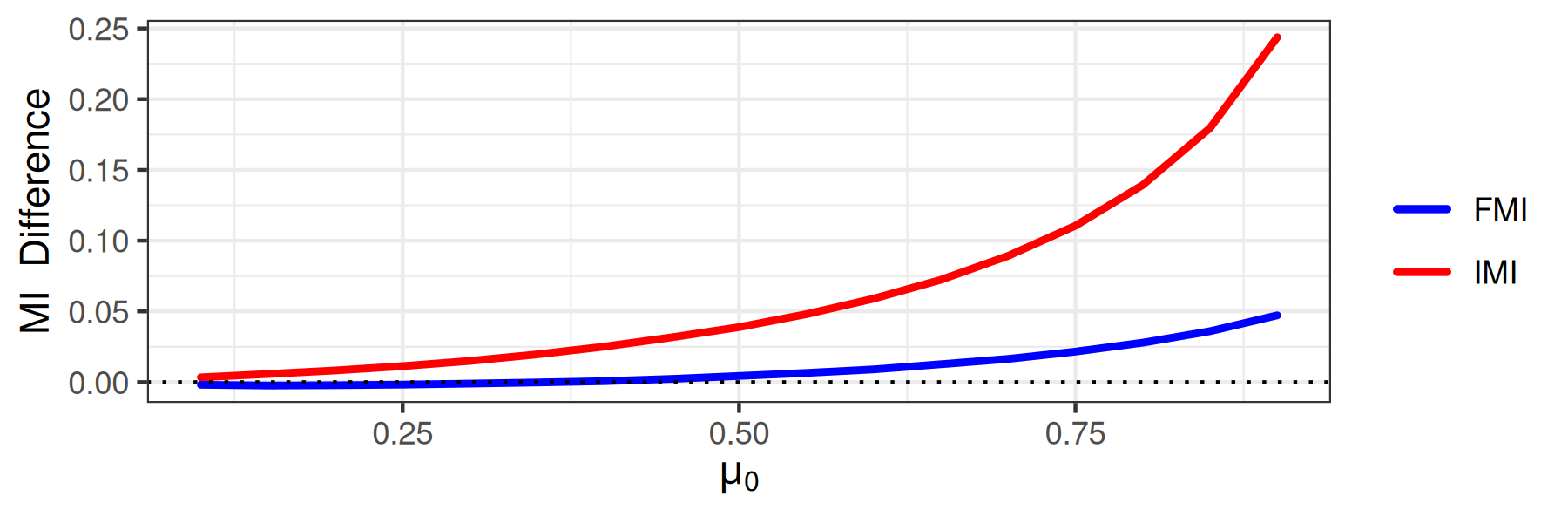}
     \caption{Change in \imi (red) and \fmi (blue) from base to expanded model against $\mu_{\lambdav}$. For $\mu_{\lambdav}$ large enough, both the \imi and \fmi improve from base to expanded model.}
     \label{fig:ex_counter_fig}
   \end{figure}

\subsubsection{Comparison with Theorem \ref{thm:tradeoff}}

Since $\lambdav$ does not enter the likelihood of the expanded model, this is a prior expansion, which means that $\yv \ind \lambdav \;\mid\; \thetav$ for almost all $\thetav$.
As we remarked after Theorem \ref{thm:tradeoff}, this conditional independence implies that the expansion ratio vanishes, i.e. $\expratio = 0$. On the other hand, we have $\eta_{\p} > 0 = \expratio$, so the condition \eqref{eq:tradeoff_condition} does not hold, as required by our observation in Figure \ref{fig:ex_counter_fig} that both $\imim > \imim_{\b}$ and $\fmim > \fmim_{\b}$.

While the previous two examples showed that the hypotheses of Theorem \ref{thm:tradeoff} can be stronger than needed for the conclusion \eqref{eq:id_fa_trade} to hold, this example shows that we cannot expect the trade-off \eqref{eq:id_fa_trade} to hold in general without a nontrivial lower bound on $\expratio$. Informally, we may conclude that when an expanded model is \textit{less} complex than its base model (as in this example), we may expect both identifiability and falsifiability to improve.

\section{Conclusions} \label{sec:conclusion}

When constructing a model, a statistician should balance various desiderata, including:
\begin{enumerate}[label=(\arabic*)]
\item predictions compatible with what is known about the world;
\item inferences sufficiently well identified to support nontrivial conclusions;
\item fitness checks powerful enough to reveal frictions between model and data.
\end{enumerate}

When fitness checks reveal deficiencies in the current model, item $(1)$ is no longer satisfied, and a better model should be sought. In practice, this is often achieved by expanding the current model. If such expansions are not accompanied by sufficiently strong prior information (e.g. in the form of prior dependence between parameters), then our results demonstrate that a tension may easily arise between items $(2)$ and $(3)$ as the model dimension and complexity grows.

This tension also underscores the importance of developing models with the particular goals of a given data analysis in mind. In particular cases, identifiability may be sacrificed (e.g. in problems where the model itself is of independent scientific interest), whereas, in other cases, falsifiability may be sacrificed (e.g. in some ``pure'' prediction problems). This work suggests that understanding these trade-offs early in the modeling process may be an essential aid in navigating the space of potential models.

Our analysis is limited in some important respects, however. The mutual information-based quantities that we study involve averages over the assumed distribution of the data $\p(\yvc)$. This both abstracts our results away from the details of a particular dataset and fails to account for the realities of misspecification (in which case our observed data will not be drawn from the assumed $\p(\yvc)$). Furthermore, our information-theoretic quantities are difficult to compute in realistically complex models, limiting our ability to monitor them in practice. Thus, a complete understanding of the challenges of model expansion will require both more realistic assumptions and more computationally tractable measures.

\bibliographystyle{plainnat} 
\bibliography{model-expansion-bayes}

\appendix

\section{Summary of Definitions and Notation}
The following table summarizes the definitions and interpretations of the most important quantities used throughout the paper.
\begin{center}
  \begin{tabular}{ |m{2cm}|m{1.7cm}|m{6.8cm}|m{1.05cm}| }\hline
    Quantity & Notation & Definition & Details\\\hline
    \small\makecell{ Identifiability\\ mutual\\  information\\ (IMI)}&
    \makecell{\gape{$\imim_{\b}$}\\ \gape{$\imim$} }&
    \makecell{\gape{$\MI_{\p_{\b}}\left( \yv, \thetav \right)$}\\\gape{$\MI_{\p}\left( \yv, \thetav \right)$}}&
    \hyperlink{imi}{(A)}
    \\\hline
    \small\makecell{Falsifiability\\ mutual\\ information\\ (FMI)}&
    \makecell{\gape{$\fmim_{\b}$} \\ \gape{$\fmim$}}&
    \makecell{\gape{$\MI_{\p_{\b}}\left(\yrep, \thetav\mid\yv \right)$} \\ \gape{$\MI_{\p}\left(\yrep, (\thetav, \lambdav)\mid\yv \right)$}}& 
    \hyperlink{fmi}{(B)}
    \\\hline
    \small\makecell{Expansion\\ ratio}&
    \makecell{\gape{$\expratio$}}&
    \makecell{\gape{$\MI_{\p}\left( \yrep, \lambdav\mid \yv,\thetav \right)/\fmim_{\b}$}} &
    \hyperlink{erat}{(C)}
    \\\hline
    \small\makecell{Contraction\\ coefficient}&
    \makecell{\gape{$\eta_{\p}$}}&
    \makecell{\gape{$\Gamma\left(\MI_{\p}\left(\yv,\thetav\right),\E_{\p}\|\thetav\|^2\right) / \MI_{\p}\left(\yv,\thetav\right)$}}&
    \hyperlink{ccoef}{(D)}
    \\\hline
    \small\makecell{$F_I$ curve}&
    \makecell{\gape{$\Gamma(t,\gamma)$}} & \makecell{\gape{$\displaystyle \begin{aligned}
    &\underset{\palt}{\sup}\left\lbrace \MI_{\paux}\left( \yv, \yv' \right) \;\Bigg\vert\; \begin{aligned}\MI_{\palt}\left(\yv',\thetav\right) &\leq t,\\ \E\|\thetav\|^2&\leq \gamma  \end{aligned}  \right\rbrace\\
    &\text{where }\paux = \p(\yvc\mid\thetavc)\palt(\yvc'\mid\thetavc)\palt(\thetavc)
    \end{aligned}$}}&
    \hyperlink{ficurve}{(E)}
    \\\hline
    \small\makecell{Contraction\\ coefficient\\ lower bound}&
    \makecell{\gape{$\eta^*_{\p}$}}&
    \makecell{\gape{$\MI_{\p}\left( \yv,\yv' \right) / \MI_{\p}\left( \yv,\thetav \right)$}}&
    \hyperlink{ccoeflb}{(F)}
    \\\hline
    \small\makecell{Posterior\\ sampling\\ divergence}&
    \makecell{\gape{$\psdm_{\p}(\yvc)$}}&
    \makecell{\gape{$\E\Big[ \kldiv{\p(\yrepc\mid\thetav)}{\p(\yrepc\mid\yvc)}\;\Big\vert\; Y = y \Big]$}}&
    \hyperlink{psd}{(G)}
    \\\hline
    \small\makecell{Excess\\ posterior\\ sampling\\ divergence}&
    \makecell{\gape{$\mathrm{EPSD}_{\p}(\yvc)$}}&
    \makecell{\gape{$\E\Big[ \kldiv{\p(\yrepc\mid \thetav,\lambdav)}{\p(\yrepc\mid \yvc,\thetav)} \;\Big\vert\; \yv = \yvc\Big]$}}&
    \hyperlink{epsd}{(H)}
    \\\hline
    \small\makecell{Posterior\\ variance\\ of $p$-values}&
    \makecell{\gape{$\pvp_T(\yvc)$}}&
    \makecell{\gape{$\mathrm{Var} \Big[ p_T(\yvc\mid\thetav) \;\Big\vert\; \yv = \yvc\Big]$}}&
    \hyperlink{pvp}{(I)}
    \\\hline
    \end{tabular}
\end{center}
\noindent
\textbf{\hypertarget{imi}{(A)} Identifiability mutual information.}
The identifiability mutual information (IMI) represents the (average) amount of information gained from observing data, where information is measured by entropy. Smaller values indicate that we expect the posterior to be very close to the prior, and for the corresponding parameters to therefore be weakly identified. The IMI can be expressed as follows as a mutual information or, equivalently, an entropy difference:
\[
  \imim_{\p}(\absparam) = \MI_{\p}\left( \yv,\absparam \right) = h_{\p(\absparam)}\left(\absparam\right) - h_{\p(\absparam, \yvc)}\left(\absparam\mid \yv\right),
\]
where $\absparam$ is any vector of model parameters. We usually take $\absparam = \thetav$, and in this case we write $\imim_{\p}$ for $\imim_{\p}(\thetav)$. When there is a particular pair of base model $\p_{\b}$ and expanded model $\p$ which is clear from context, we will also write $\imim$ for $\imim_{\p}$ and $\imim_{\b}$ for $\imim_{\p_{\b}}$.

\noindent
\textbf{\hypertarget{fmi}{(B)} Falsifiability mutual information.}
The falsifiability mutual information (FMI) represents the (negative) expected posterior uncertainty about, or variability of, the sampling distribution $\p(\yrep\mid \absparam)$. The FMI may be expressed as a conditional mutual information or, equivalently, as an expected divergence between a randomly chosen sampling distribution $\p(\yrepc\mid \absparam)$ and its posterior average $p(\yrepc\mid\yv)$.
\[
  \fmim_{\p}\left( \absparam \right) = -\MI\left( \yrep, \absparam\mid \yv \right) = -\E_{\yv\sim\p(\yvc)}\left[ \E\Big[ \kldiv{\p(\yrepc\mid\absparam)}{\p(\yrepc\mid\yv)}\;\Big\vert\; \yv \Big] \right].
\]
The inner expectation on the right-hand side is also the posterior sampling divergence (see \hyperlink{psd}{(G)} below). We define the FMI with the negative sign above so that, like the IMI, larger values are associated with better outcomes (e.g. better falsifiability). When the FMI is lower (i.e. more negative), the model becomes harder to check (see Section 3.2 for details). We typically consider the FMI with $\absparam$ taken to be a model's full parameter vector. Thus, for a base model $\p_{\b}$, we take $\absparam = \thetav$ and write $\fmim_{\b}$ for $\fmim_{\p_{\b}}(\thetav)$. For an expanded model $\p$, we take $\absparam = (\thetav,\lambdav)$ and write $\fmim = \fmim_{\p}((\thetav,\lambdav))$.

\noindent
\textbf{\hypertarget{erat}{(C)} Expansion ratio.}
For a base model $\p_{\b}$ and expanded model $\p$, the expansion ratio $\expratio$ measures the amount of uncertainty about the sampling distribution $\p(\yrep\mid \thetav,\lambdav)$ due to the expanded parameters $\lambdav$ relative to the uncertainty about the sampling distribution in the base model. This is given by the ratio
\[
\expratio = \frac{\MI\left( \yrep,\lambdav\mid \yv,\thetav \right)}{-\fmim_{\b}}.
\]
In the numerator, we condition on $\thetav$ in order to count only the posterior uncertainty about $p(\yrep\mid\thetav,\lambdav)$ which is due to or explainable by uncertainty in $\lambdav$. The numerator is also the expected value of the excess posterior sampling divergence (see \hyperlink{epsd}{(H)} below).

\noindent
\textbf{\hypertarget{ccoef}{(D)} Contraction coefficient.}
The contraction coefficient $\eta_{\p}$ is a measure of association between data $\yv$ and parameters $\thetav$, analogous to a nonlinear and multidimensional generalization of the squared correlation. The mutual information $\MI_{\p}\left( \yv,\thetav \right)$ depends on both the sampling distribution $\p(\yvc\mid\thetavc)$ and the prior $\p(\thetavc)$. The contraction coefficient measures how much of this mutual information can be attributed to the sampling distribution $\p(\yvc\mid\thetav)$. In other words, the contraction coefficient is a measure of how much information the distribution $\p(\yvc\mid\thetavc)$ preserves about its parameter $\thetavc$. The contraction coefficient is defined as 
\[
\eta_p = \frac{\Gamma\left( \MI_{\p}(\yv,\thetav), \E_{\p}\|\thetav\|^2 \right)}{\MI_{\p}(\yv,\thetav)},
\]
where $\Gamma$ is the $F_I$ (see \hyperlink{ficruve}{(E)} below). We have that $\eta_p\in [0,1]$, with $\eta_p$ larger when $\yv\sim\p(\yvc\mid\thetavc)$ is a better predictor of $\thetavc$.

\noindent
\textbf{\hypertarget{ficurve}{(E)} $F_I$ curve.}
For a sampling distribution $\p(\yvc\mid\thetavc)$, the $F_I$ curve, introduced in \cite{SDPI_Nonlinear} to study strong data processing inequalities, is given as follows.
\[
\Gamma(t;\gamma) = \sup_{\palt(\yvc',\thetavc)}\left\lbrace \MI_{\paux}\left(\yv, \yv'\right)\;\Bigg\vert\;\MI_{\palt}\left(\yv',\thetav\right) \leq t, \E_{\thetav\sim\palt(\thetavc)}\|\thetav\|^2\leq \gamma  \right\rbrace,
\]
where, for any model $\palt(\yvc',\thetavc)$, we define
\[
\paux(\yvc,\yvc',\thetavc) = \p(\yvc\mid\thetavc)\palt(\yvc'\mid\thetavc)\palt(\thetavc).
\]
Since, for any model $\paux$, $\yv'\to\thetav\to\yv$ forms a Markov chain, the data processing inequality implies that $\MI(\yv,\yv')\leq\MI(\yv',\thetav)$. The gap in this inequality depends on how much information the distribution $\p(\yvc\mid\thetavc)$ preserves about its parameter $\thetavc$. The $F_I$ curve exploits this idea to give a measure of the ``strength'' of $\p(\yvc\mid\thetavc)$ by finding the minimal gap given only the constraining parameters $t$ and $\gamma$ (and hence removing the dependence on the particular alternative model $\palt$). In particular, given any alternative model, we have the strong data processing inequality
\[
\MI_{\palt}(\yv,\yv')\leq F_I\left( \MI_{\palt}(\yv',\thetav), \E_{\palt}\|\thetav\|^2 \right).
\]

\noindent
\textbf{\hypertarget{ccoeflb}{(F)} Contraction coefficient lower bound.}
The quantity $\eta^*_{\p} = \MI(\yv,\yv')/\MI(\yv,\thetav)$ lower bounds the contraction coefficient $\eta_{\p}$ (as is easily seen from the definition of the $F_I$ curve, see \hyperlink{ficurve}{(I)} above). Unlike the contraction coefficient, $\eta_{\p}^*$ depends directly on the prior distribution $\p(\thetavc)$, and therefore cannot be interpreted as a property of the sampling distribution $\p(\yvc\mid\thetavc)$. However, in some cases where $\eta_{\p}$ has no closed form, $\eta_{\p}^*$ may be computed analytically. We use $\eta_{\p}^*$ throughout the examples in Section 4 for this reason (though we note that in most realistic cases, neither $\eta_{\p}$ nor $\eta_{\p}^*$ will be computable).

\noindent
\textbf{\hypertarget{psd}{(G)} Posterior sampling divergence.}
The posterior sampling divergence (PSD) measures, for a particular observed data set $\yvc$, the posterior variability in the sampling distribution $\p(\yrepc\mid \absparam)$. This variability is quantified using the KL divergence between a randomly chosen sampling distribution $\p(\yrepc\mid \absparam)$ and its posterior average $\p(\yrepc\mid\yvc)$. Specifically, we define
\[
  \psdm(\yvc) = \E\Big[ \kldiv{\p(\yrepc\mid\absparam)}{\p(\yrepc\mid\yvc)}\;\Big\vert\; \yv = \yvc \Big].
\]
The PSD can be viewed as a statistic-free analog of the posterior variance of $p$-values (see \hyperlink{pvp}{(I)} below). Averaging the PSD over the prior predictive distribution $\p(\yvc)$ and negating yields the falsifiability mutual information (see \hyperlink{psd}{(B)} above).

\noindent
\textbf{\hypertarget{epsd}{(H)} Excess posterior sampling divergence.}
For an expanded model $\p(\yvc,\thetavc,\lambdavc)$ and fixed observed dataset $\yvc$, the excess posterior sampling divergence (EPSD) measures the amount of posterior uncertainty about the sampling distribution $\p(\yrep\mid\thetavc,\lambdavc)$ which is explainable by excess uncertainty in $\lambdavc$ (i.e. uncertainty conditional on $\thetav$). We define this using the following divergence.
\[
  \mathrm{EPSD}(\yvc) = \E\Big[ \kldiv{\p(\yrepc\mid \thetav,\lambdav)}{\p(\yrepc\mid \yvc,\thetav)} \;\Big\vert\; \yv = \yvc\Big].
\]
We note that $\p(\yrepc\mid \yvc,\thetav) = \E_{\lambdav\sim p(\lambdavc\mid\yvc,\thetav)}\left[ \p(\yrepc\mid \thetav, \lambdav) \right]$.

\noindent
\textbf{\hypertarget{pvp}{(I)} Posterior variance of $p$-values.} 
For fixed observed data $\yvc$ and a test statistic $T$, the posterior variance of $p$-values (PVP) measures the posterior variance of the conditional $p$-values $p_T(y\mid\absparam) = \P\left( |T(\yrep)|\geq |T(\yvc)|\mid \absparam \right)$. Specifically, we have
\[
\mathrm{PVP}_T(\yvc) = \mathrm{Var}\left[ p_T(\yvc\mid\thetav)\mid \yv = \yvc \right].
\]
Larger values of $\mathrm{PVP}_T$ indicate higher posterior uncertainty about how well the model fits the data, as measured by the statistic $T$. When $\mathrm{PVP}_T$ is higher, we also expect the distribution of the (unconditional) posterior predictive $p$-value $p_T(\yv)$ to be more concentrated around $1/2$ and less uniform (when sampling $\yv\sim\p(\yvc)$).

\section{Basic Quantities and Relations from Information Theory}\label{app:it_overview}
In this section, we provide statements of the basic results from information theory that we make use of throughout this paper. Proofs of these results can be found in any introductory course on information theory. We state all results in terms of conditional entropies and mutual informations when appropriate since these contain the non-conditional statements as special cases. First we review relevant definitions. We state these results in terms of abstract random variables $\uv$, $\vv$, and $\wv$, which we substitute with (combinations of) the model quantities $\yv$, $\yrep$, $\thetav$, and $\lambdav$ in the main defintions and results of this work.
\begin{definition}[Basic Quantities of Information Theory]
  Let $\p(v,u)$ be some joint model. Then the entropy of $\uv$ is defined as
  \begin{equation}
    \label{eq:entropy_def_app}
    h_{\p(u)}(\uv) = -\E_{U\sim \p(u)}\log \p(\uv).
  \end{equation}
  The conditional entropy of $\uv$ given $\vv$ is just the average entropy of the conditional distributions:
  \begin{equation}
    \label{eq:cond_entropy_def}
    h_{\p(v,u)}\left( \uv\mid\vv \right) = \E_{V\sim \p(v)}\left[ h_{\p(u\mid \vv)}\left( \uv \right) \right] = -\E_{(V,U) \sim \p(v,u)}\log \p(\uv\mid\vv).
  \end{equation}
  The mutual information between $\uv$ and $\vv$ is the amount by which entropy is expected to decrease after conditioning $\vv$:
  \begin{equation}
    \label{eq:mi_def_app}
    \MI_{\p}\left( \vv,\uv \right) = h_{\p(u)}\left( \uv \right) - h_{\p(v,u)}\left( \uv\mid\vv \right).
  \end{equation}
  Finally, if we extend our joint model to $\p(u,v,w)$ where $\wv$ is any additional quantity, then the conditional mutual information given $\wv$ is just the difference of the corresponding conditional entropies:
  \begin{equation}
    \label{eq:cmi_def_app}
    \MI_{\p}\left( \vv,\uv\mid\wv \right) = h_{\p(w,u)}\left( \uv\mid\wv \right) - h_{\p(v,w,u)}\left( \uv\mid\vv,\wv \right).
  \end{equation}
\end{definition}

The first important result allows us to break up an entropy or mutual information expression additive over the components of vector arguments.
\begin{lemma}[Chain Rule for Entropy and Mutual Information]\label{lem:it_chain}
  Let $\p(u,v,w)$ be a joint model and suppose that $\uv$ can be partitioned into sub-vectors $\left( \uv_1,\ldots,\uv_m \right)$ for some $m\geq 1$. Then we have that
  \[
    h_{\p}\left( \uv\mid\vv \right) = \sum_{i=1}^m h_{\p}\left( \uv_i\mid \uv_{< i},\vv \right),
  \]
  where $\uv_{<i}=\left( \uv_1,\ldots,\uv_{i-1} \right)$ for $i\geq 2$, and $\uv_{<1} = \{\}$. Furthermore, we have that
  \[
    \MI_{\p}\left( \vv,\uv\mid\wv \right) = \sum_{i=1}^m \MI\left( \vv,\uv_i\mid \wv,\uv_{<i} \right).
  \]
\end{lemma}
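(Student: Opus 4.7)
The plan is to derive both chain rules directly from the factorization of conditional densities, which is the standard route for this lemma. First I would observe that for the partition $\xv = (\xv_1,\ldots,\xv_m)$, the conditional joint density factors as
\[
q(\xv \mid \yv) \;=\; \prod_{i=1}^m q(\xv_i \mid \xv_{<i}, \yv),
\]
which is nothing more than iterated conditioning. Taking logarithms, negating, and integrating against the joint density $q(\xv,\yv)$ gives
\[
h_q(\xv \mid \yv) \;=\; -\E_{q(\xv,\yv)} \log q(\xv \mid \yv) \;=\; -\sum_{i=1}^m \E_{q(\xv,\yv)} \log q(\xv_i \mid \xv_{<i}, \yv).
\]
For each term in the sum, marginalizing over the components of $\xv$ not appearing in the conditioning reduces the expectation to one against $q(\xv_{\leq i}, \yv)$, which by definition is exactly $h_q(\xv_i \mid \xv_{<i}, \yv)$. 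This yields the first claim.

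Next, for the conditional mutual information chain rule I would start from the definition
\[
\MI_q(\yv, \xv \mid \zv) \;=\; h_q(\xv \mid \zv) - h_q(\xv \mid \zv, \yv),
\]
and apply the entropy chain rule established in the previous paragraph to each of the two terms, once with conditioning variable $\zv$ and once with conditioning variable $(\zv,\yv)$. The two resulting sums share the index set, so pairing them gives
\[
\MI_q(\yv,\xv \mid \zv) \;=\; \sum_{i=1}^m \bigl[ h_q(\xv_i \mid \xv_{<i}, \zv) - h_q(\xv_i \mid \xv_{<i}, \zv, \yv) \bigr].
\]
The bracketed quantity is precisely the conditional mutual information $\MI_q(\yv, \xv_i \mid \zv, \xv_{<i})$ by the definition \eqref{eq:cmi_def}, so the second identity follows immediately.

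The only genuine subtlety is measure-theoretic bookkeeping: exchanging sums and expectations requires that the individual conditional entropies be well defined and finite (or at least absolutely integrable), and that the conditional densities exist in the senses used above. Since the paper works throughout with joint models admitting the relevant conditional densities and finite (differential) entropies, I would simply invoke these as background regularity assumptions rather than impose them as new hypotheses. There is no conceptual obstacle, and the argument is a clean two-step reduction: factorize the density, then subtract two applications of the resulting entropy identity.
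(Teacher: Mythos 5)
Your proof is correct and is precisely the standard argument (factorize the conditional density, take expectations, then subtract two applications of the entropy chain rule) that the paper itself defers to, since it states this lemma without proof and refers the reader to introductory information theory texts. Nothing further is needed.
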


Next, it can be useful to express the (conditional) mutual information in terms of the KL divergence, which quantifies discrepancy between two probability distributions $\p_1(v)$ and $\p_2(v)$. In particular, the KL divergence is given as
\[
\kldiv{\p_1(v)}{\p_2(v)} = \E_{\vv\sim \p_1(v)}\log\left[ \frac{\p_1(\vv)}{\p_2(\vv)} \right].
\]
The mutual information can be related to the KL divergence in two different ways.
\begin{lemma}[Mutual Information as KL Divergence]\label{lem:it_div}
  Let $\p(u,v,w)$ be a joint model. Then we have
  \begin{align*}
    \MI(\vv,\uv\mid\wv) &= \E\left[ \kldiv{\p(u,v\mid \wv)}{\p(u\mid \wv) p( v\mid \wv)} \right]\\
    &= \E\left[ \kldiv{p(u\mid\vv,\wv)}{p(u\mid\wv)} \right].
  \end{align*}
\end{lemma}

It is of fundamental importance that the KL divergence is always nonnegative, which follows by an application of Jensen's inequality.
\begin{lemma}[Nonnegativity of the KL Divergence]
  For any densities $\p_1(v)$ and $\p_2(v)$, we have
  \[
    \kldiv{\p_1(v)}{\p_2(v)}\geq 0
  \]
  with equality if and only if $\p_1(v)=\p_2(v)$ $\p_1$-almost surely.
\end{lemma}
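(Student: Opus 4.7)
The plan is to apply Jensen's inequality to the concave function $\log$ against the ratio $q(\yv)/p(\yv)$ treated as a random variable under $p$. First I would address the support issue: if there is a measurable set $A$ with $\int_A p(\yv)\,d\yv > 0$ but $\int_A q(\yv)\,d\yv = 0$, then by the standard convention $0\log 0 = 0$ and $p\log(p/q) = +\infty$ on $\{q=0, p>0\}$, so $D(p\|q)=+\infty$ and the bound holds trivially with strict inequality. We may therefore restrict attention to the case in which $q$ is absolutely continuous with respect to $p$ on the support of $p$, so that $\log(q(\yv)/p(\yv))$ is well defined $p$-almost everywhere.

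In that case I would write
\[
  -D\!\left(p(\yv) \,\|\, q(\yv)\right) \;=\; \E_{p(\yv)} \log\!\left(\frac{q(\yv)}{p(\yv)}\right),
\]
and apply Jensen's inequality using concavity of $\log$:
\[
  \E_{p(\yv)} \log\!\left(\frac{q(\yv)}{p(\yv)}\right) \;\leq\; \log \E_{p(\yv)}\!\left[\frac{q(\yv)}{p(\yv)}\right] \;=\; \log \int_{\{p>0\}} q(\yv)\,d\yv \;\leq\; \log 1 \;=\; 0.
\]
Rearranging yields $D(p\|q) \geq 0$, which is the desired inequality.

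For the equality case, I would invoke the strict concavity of $\log$: Jensen's inequality is an equality for a strictly concave function precisely when its argument is constant almost surely with respect to the integrating measure. Hence equality in the display forces $q(\yv)/p(\yv) = c$ for some constant $c$, $p$-almost surely. Integrating this identity against $p$ gives $c = \int q(\yv)\,d\yv = 1$ provided the complementary support set $\{p=0, q>0\}$ has $q$-measure zero — which is exactly what is forced by the second inequality above being an equality. Therefore $p(\yv) = q(\yv)$ $p$-almost surely. The only subtle step is this bookkeeping about supports and the equality condition in Jensen's inequality; the rest is a single application of concavity, which is why the paper introduces this lemma with the remark that it follows directly from Jensen.
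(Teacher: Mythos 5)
Your proof is correct and follows exactly the route the paper indicates: the paper gives no detailed argument, remarking only that the lemma ``follows by an application of Jensen's inequality,'' and your write-up is the standard careful version of that argument, including the support bookkeeping and the strict-concavity equality case. Nothing to add.
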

This immediately implies nonnegativity of the mutual information, and in turn the fact that
\[
  h_{\p(u,v)}\left(\vv\mid\uv  \right)\leq h_{\p(v)}\left( \vv \right)
\]
for any joint distribution $\p(u,v)$.

It is often useful to know how these metrics operate under certain transformations of the random quantities in terms of which they are defined. This is characterized by the following result.
\begin{lemma}[Entropy and Mutual Information Under Transformation]
  Let $\Am\in\RR^{d\times d}$ be any invertible matrix and let $\vv^\prime=\Am\vv$. Then we have
  \[
    h_{\p(v^\prime)}\left( \vv^\prime \right) = h_{\p(v)}\left( \vv \right) + \log \left\lvert \det\Am \right\rvert.
  \]
  Furthermore, if $\vv^\prime = \vv + c$ for any $c\in\RR$, then $h\left( \vv \right)=h\left( \vv^\prime \right)$. Thus, the entropy is invariant under translations and orthogonal transformations. The mutual information satisfies the stronger property of invariance under arbitrary smooth reparametrizations of the individual arguments. Specifically, let $\phi,\psi$ be smooth, invertible maps, and define $\vv^\prime=\phi(\vv)$ and $\uv^\prime = \psi(\uv)$. Then we have that
  \[
    \MI(\vv^\prime,\uv^\prime\mid\wv) = \MI(\vv,\uv\mid\wv).
  \]
\end{lemma}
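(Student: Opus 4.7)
The plan is to reduce every claim to the change-of-variables formula for densities together with the integral definition of differential entropy \eqref{eq:entropy_def} and the KL representation of mutual information from Lemma \ref{lem:it_div}. First I would handle the linear statement: if $\yv' = \Am\yv$ with $\Am$ invertible, the change-of-variables formula gives $q_{\yv'}(\yv') = q_{\yv}(\Am^{-1}\yv')/\lvert\det\Am\rvert$. Substituting into the definition of entropy and changing the integration variable back to $\yv = \Am^{-1}\yv'$ (which contributes a compensating $\lvert\det\Am\rvert$ in the volume element), the density factor cancels and the $-\log\lvert\det\Am\rvert^{-1}$ piece pulls out of the expectation as $+\log\lvert\det\Am\rvert$, yielding $h(\yv') = h(\yv) + \log\lvert\det\Am\rvert$. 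Translation by $c\in\RR^d$ is the special case where the Jacobian is the identity, so $q_{\yv'}(\yv') = q_{\yv}(\yv'-c)$ and the entropy is unchanged; orthogonal transformations fall out immediately since $\lvert\det\Am\rvert = 1$.

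For the mutual-information statement I would use Lemma \ref{lem:it_div} to write
\[
  \MI(\yv,\xv\mid\zv) = \E_{q(\zv)} D\bigl(q(\yv,\xv\mid\zv)\,\big\vert\big\vert\, q(\yv\mid\zv)\,q(\xv\mid\zv)\bigr),
\]
so it suffices to show the KL divergence inside is invariant under $(\yv,\xv)\mapsto(\phi(\yv),\psi(\xv))$ for each fixed $\zv$. Fix $\zv$ and apply the joint change of variables with Jacobian $\lvert\det J_\phi(\yv)\rvert\,\lvert\det J_\psi(\xv)\rvert$. The transformed joint conditional density picks up a factor $\bigl(\lvert\det J_\phi\rvert\,\lvert\det J_\psi\rvert\bigr)^{-1}$, and each transformed marginal picks up exactly one of these factors; hence the Jacobian contributions cancel in the ratio appearing inside $\log$, and the Jacobian in the integration measure matches the density transformation. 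Both the integrand (pointwise) and the integration measure are therefore preserved, so the KL is unchanged and, after averaging over $q(\zv)$, so is the conditional mutual information.

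There is no genuine obstacle here beyond bookkeeping; the only thing requiring care is the conditional version of the MI statement, since we want to use an unconditional change-of-variables argument. The resolution is exactly that $\zv$ is not transformed, so we can treat the KL divergence inside the outer expectation as an ordinary unconditional KL between densities on $(\yv,\xv)$-space and apply the invariance there pointwise in $\zv$. This also clarifies why MI invariance is stronger than entropy invariance: the $\log\lvert\det\Am\rvert$ term that survives in the entropy calculation appears identically in the joint and the product-of-marginals and thus cancels inside the $\log$-ratio defining the KL divergence, which is why arbitrary smooth invertible reparametrizations (not just volume-preserving ones) leave mutual information fixed.
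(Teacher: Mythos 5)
Your proof is correct and is the standard textbook argument: change of variables for the density plus a substitution in the entropy integral for the linear case, and cancellation of the Jacobian factors inside the log-ratio of the KL representation (Lemma~\ref{lem:it_div}), applied pointwise in $\zv$, for the conditional mutual information. The paper itself gives no proof of this lemma --- it sits in Appendix~\ref{app:it_overview} among background facts whose proofs are explicitly deferred to ``any introductory course on information theory'' --- so there is nothing to compare against; your argument is precisely the one such a course would supply.
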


The general behavior of the mutual information under potentially noninvertible transformations is characterized by the data processing inequality.
\begin{lemma}[Data Processing Inquality]
  Let $\p(u,v,w)$ be any distribution, and suppose that $\uv$ and $\wv$ are conditionally independent given $\vv$. Then we have that
  \[
    \MI(\uv,\vv) \geq \MI(\uv,\wv).
  \]
  In particular, the above inequality holds if $\wv = \psi(\vv)$ for any function $\psi$.
\end{lemma}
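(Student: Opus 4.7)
The plan is to derive the inequality from the chain rule for mutual information (Lemma \ref{lem:it_chain}) combined with the nonnegativity of mutual information. Both of these are results that the excerpt has already stated, so they are available for use without further justification. The proof is essentially a bookkeeping argument: compute the mutual information between $\xv$ and the pair $(\yv,\zv)$ in two different ways and compare the results.

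Specifically, I would first apply the chain rule to the joint $\MI(\xv,(\yv,\zv))$ in the two natural orderings, obtaining
\begin{equation*}
\MI(\xv,\yv) + \MI(\xv,\zv\mid\yv) \;=\; \MI(\xv,(\yv,\zv)) \;=\; \MI(\xv,\zv) + \MI(\xv,\yv\mid\zv).
\end{equation*}
The conditional independence hypothesis says that $q(\xv,\zv\mid\yv) = q(\xv\mid\yv)q(\zv\mid\yv)$, which by the KL-divergence form of the conditional mutual information (Lemma \ref{lem:it_div}) forces $\MI(\xv,\zv\mid\yv)=0$. On the other side of the equation, $\MI(\xv,\yv\mid\zv)\geq 0$ by the nonnegativity result. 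Rearranging then yields $\MI(\xv,\yv) = \MI(\xv,\zv) + \MI(\xv,\yv\mid\zv) \geq \MI(\xv,\zv)$.

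For the ``in particular'' clause, I would verify that whenever $\zv = \psi(\yv)$ for some (measurable) $\psi$, the conditional independence hypothesis is automatically satisfied: conditional on $\yv$, the quantity $\zv$ is a constant, so $q(\zv\mid\yv,\xv) = q(\zv\mid\yv)$ as point masses, and hence $\xv$ and $\zv$ are trivially conditionally independent given $\yv$. The inequality $\MI(\xv,\yv)\geq \MI(\xv,\psi(\yv))$ then follows from the general statement.

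I do not anticipate a real obstacle here: every ingredient (chain rule, KL characterization, nonnegativity) has been stated earlier in Appendix \ref{app:it_overview}, and the proof reduces to a one-line algebraic rearrangement after applying them. The only mild care needed is to ensure the chain rule is being invoked in the conditional form appropriate to pairing $\xv$ against $(\yv,\zv)$, which is exactly the $m=2$ case of Lemma \ref{lem:it_chain} with $\xv_1=\yv$, $\xv_2=\zv$, and no conditioning variable.
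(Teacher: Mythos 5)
Your argument is correct and is the standard textbook proof of the data processing inequality: expanding $\MI(\xv,(\yv,\zv))$ by the chain rule in both orders, killing $\MI(\xv,\zv\mid\yv)$ via the conditional-independence hypothesis and the KL characterization, and invoking nonnegativity of $\MI(\xv,\yv\mid\zv)$. The paper itself states this lemma in Appendix \ref{app:it_overview} without proof, deferring to standard references, so there is nothing to compare against; your derivation, including the observation that $\zv=\psi(\yv)$ trivially satisfies the conditional-independence hypothesis, is exactly the argument those references give.
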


Finally, certain distributions maximize the entropy under certain conditions. For our purposes, it suffices to note that normal distributions on $\RR^d$ maximize the entropy among all distributions with fixed covariance matrix and support equal to $\RR^d$.
\begin{lemma}[Maximum Entropy of Normal]
  Let $\p(v)$ be any probability distribution supported on $\RR^d$, and let $\p_Z(\vv)$ be a normal distribution with any mean and covariance matrix equal to the covariance $\covm$ of $\p_Z(v)$. Then we have
  \[
    h_{\p(v)}\left( \vv \right)\leq h_{\p_Z(v)}\left( \vv \right) = \frac{1}{2}\log\left( \det\left( 2\pi e \covm \right) \right).
  \]
\end{lemma}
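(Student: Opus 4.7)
The plan is the standard Gibbs-inequality argument, which reduces the maximum entropy claim to nonnegativity of KL divergence. First I would compute $h_{p(\yv)}(\yv)$ directly. Taking $\muv$ to be the mean of $q$ (and hence also, by hypothesis, the mean of $p$), the log density of the normal with covariance $\covm_q$ is
\[
  \log p(\yv) = -\frac{d}{2}\log(2\pi) - \frac{1}{2}\log\det\covm_q - \frac{1}{2}(\yv-\muv)^T\covm_q^{-1}(\yv-\muv).
\]
Taking $-\E_p$ of both sides, the quadratic form contributes $\frac{1}{2}\E_p\tr\left[\covm_q^{-1}(\yv-\muv)(\yv-\muv)^T\right] = \frac{1}{2}\tr\left(\covm_q^{-1}\covm_q\right) = d/2$, so after combining constants I obtain $h_{p(\yv)}(\yv) = \tfrac{1}{2}\log\det(2\pi e\,\covm_q)$, which gives the claimed equality on the right-hand side.

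The key observation for the inequality is that the integrand in $-\E_q \log p(\yv)$ is a polynomial of degree at most two in $\yv$, so its expectation depends on $q$ only through the first and second moments of $q$. Since $q$ and $p$ share the same mean $\muv$ and the same covariance $\covm_q$ by assumption, I get
\[
  -\E_{q(\yv)}\log p(\yv) = -\E_{p(\yv)}\log p(\yv) = h_{p(\yv)}(\yv).
\]
This is the critical step: it lets the normal reference density absorb any other distribution with matching second moments.

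Now I invoke the nonnegativity of the KL divergence, which is stated as a lemma in the excerpt. Specifically,
\[
  0 \le D\bigl(q(\yv)\,\Vert\,p(\yv)\bigr) = \E_{q(\yv)}\log q(\yv) - \E_{q(\yv)}\log p(\yv) = -h_{q(\yv)}(\yv) + h_{p(\yv)}(\yv),
\]
and rearranging yields $h_{q(\yv)}(\yv) \le h_{p(\yv)}(\yv) = \tfrac{1}{2}\log\det(2\pi e\,\covm_q)$, which is exactly the statement.

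There is no real obstacle here; the only mild subtlety is the moment-matching step, i.e.\ verifying that replacing the expectation under $q$ with the expectation under $p$ in $\E[\log p(\yv)]$ is valid because $\log p$ is at most quadratic. One should also note for rigor that if $\covm_q$ is singular, then $q$ is not supported on all of $\RR^d$ (it is concentrated on an affine subspace), and the right-hand side $\tfrac{1}{2}\log\det(2\pi e\,\covm_q)$ diverges to $-\infty$ while $h_{q(\yv)}$ is $-\infty$ in the differential-entropy sense as well; restricting to nonsingular $\covm_q$ (which is implicit in the full-support hypothesis on $q$) avoids this edge case.
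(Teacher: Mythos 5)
Your proof is correct and is exactly the standard Gibbs-inequality argument (moment-matching plus nonnegativity of KL divergence) that this appendix lemma implicitly relies on; the paper itself states it without proof, deferring to standard references. The only cosmetic remark is that the lemma allows $p$ to have \emph{any} mean, but since the entropy of a normal is translation-invariant your choice of matching the mean of $q$ loses no generality.
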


\section{Proof of Lemma 1}
\begin{proof}
  First, we note that the Bretagnolle-Huber inequality tells us that
  \begin{equation}
    \label{eq:BH_ineq}
    d^2_{\mathrm{TV}}\left(\p\left(\yrepc\mid\thetavc\right), \p\left(\yrepc\mid\yvc\right)\right) \leq 1 - \exp\left( - \kldiv{\p\left(\yrepc\mid\thetavc\right)}{\p\left(\yrepc\mid\yvc\right)}\right),
  \end{equation}
  where $d_{\mathrm{TV}}$ is the total variation distance between probability distributions.  Noting that $x \mapsto 1 - \exp(-x)$ is concave, Jensen's inequality and the definition of the posterior sampling divergence along with \eqref{eq:BH_ineq} gives us that
  \begin{equation}
    \label{eq:PSD_ub}
    \E \left[ d^2_{\mathrm{TV}}\left(\p\left(\yrepc\mid\thetav\right), \p\left(\yrepc\mid\yvc\right)\right) \mid \yv=\yvc \right] \leq 1 - \exp\left( - \mathrm{PSD}(\yvc)\right).
  \end{equation}
  Expanding the definition of the total variation distance, we can lower bound the left-hand side of \eqref{eq:PSD_ub} as
  \begin{align}
    \E&\left[ d^2_{\mathrm{TV}}\left(\p\left(\yrepc\mid\thetav\right), \p\left(\yrepc\mid\yvc\right)\right) \mid\yv = \yvc \right]\nonumber\\
                          & = \E\left[ \sup_{E}\left[\P_{\p(\yrepc\mid\thetav)}(E) - \P_{\p(\yrepc\mid\yvc)}(E)\right]^2\;\Big\vert\; \yv=\yvc \right]\nonumber\\
                          &\geq \E\Big[ \left[\P_{p(\yrepc\mid\thetav)}(|T(\yrep)| \geq |T(\yvc)|) - \P_{p(\yrepc\mid\yvc)}(|T(\yrep)| \geq |T(\yvc)|)\right]^2\;\Big\vert\;\yv=\yvc \Big]\label{eq:tv_lb1}\\
                          &= \E\left[ \left[p_T(\yvc\mid\thetav) - p_T(\yvc)\right]^2\;\Big\vert\;\yv=\yvc \right]\label{eq:tv_lb2}\\
                          &= \E\left[ \left[p_T(\yvc\mid\thetav) - \E\left[ p_T(\yvc\mid\thetav)\mid\yv=\yvc \right]\right]^2\;\Big\vert\;\yv=\yvc \right]\label{eq:tv_lb3}\\
    &= \mathrm{Var}\left[p_T(\yvc\mid\thetav)\;\Big\vert\;\yv=\yvc\right],\nonumber
  \end{align}
where \eqref{eq:tv_lb1} follows by plugging in the particular event $E = \{|T(\yrep)| \geq |T(\yv)|\}$, \eqref{eq:tv_lb2} follows by the definitions of the posterior predictive and conditional $p$-values $p_T(\yvc)$ and $p_T(\yvc\mid\thetavc)$ respectively, and \eqref{eq:tv_lb3} follows from the identity $p_T(\yvc) = \E\left[ p_T(\yvc\mid\thetav)\mid \yv=\yvc \right]$. Combining the above with \eqref{eq:PSD_ub} gives the desired conclusion.
\end{proof}

\section{Proof of Theorem 1}

In order to prove our main result on the tradeoff of identifiability and falsifiability under model expansion, we first establish the following lemma.

\begin{lemma}\label{lem:main_it_lem}
  Assume that $\E_{\p}\thetav^2<\infty$, and let $\eta_{\p}$ be the contraction coefficient for the sampling distribution $\p\left(\yvc\mid\thetavc\right)$. Then, we have that
  \begin{equation}
    \label{eq:cmi_mi_lb}
    \MI\left(\yv,\thetav\right)\geq \MI\left(\yrep,\thetav\mid \yv\right) \geq \left(1 - \eta_{\p}\right)\MI\left(\yv, \thetav\right).
  \end{equation}
\end{lemma}

\begin{proof}
  The first inequality follows simply as
  \begin{align}
    \MI\left(\yv,\thetav\right) &= h\left(\yv\right) - h\left(\yv\mid\thetav\right)\nonumber\\
                                &= h\left(\yrep\right) - h\left(\yrep\mid\thetav\right)\label{eq:lmi3}\\
                                &\geq h\left(\yrep\mid\yv\right) - h\left(\yrep\mid\thetav\right)\label{eq:lmi4}\\
                                &= \MI\left(\yrep,\thetav\mid \yv\right),\label{eq:lmi5}
  \end{align}
  where \eqref{eq:lmi3} follows from the fact that $(\yv,\thetav) \stackrel{d}{=} (\yrep,\thetav)$, \eqref{eq:lmi4} follows from the fact that conditioning reduces entropy, and \eqref{eq:lmi5} follows from the fact that $\yv$ and $\yrep$ are conditionally independent given $\thetav$.

  Now the second inequality follows as
  \begin{align}
    \MI\left(\yrep,\thetav\mid\yv\right) &= \MI\left(\yv,\thetav\right) - \MI\left(\yv,\yrep\right)\label{eq:lmi1}\\
                                         &=\MI\left(\yv,\thetav\right)\left[1 - \frac{\MI\left(\yv, \yrep\right)}{\MI\left(\yv,\thetav\right)}\right]\nonumber\\
    &\geq \MI\left(\yv,\thetav\right)\left[1 - \eta_{\p}\right].\label{eq:lmi2}
  \end{align}
  Here, \eqref{eq:lmi1} follows from the chain rule for mutual information and that fact that $\MI\left(\yrep, (\yv,\thetav)\right) = \MI\left(\yrep, \thetav\right) = \MI\left(\yv,\thetav\right)$ by the conditional independence of $\yrep$ and $\yv$ given $\thetav$. Then \eqref{eq:lmi2} follows by the fact that $p(\yrep,\thetav)$ lies within the set of distributions
  over which the $F_I$ curve is defined.
\end{proof}

With this lemma established, we can now prove the main tradeoff result.

\begin{proof}
  Recall our main assumption:
  \begin{equation}
    \label{eq:tmi1}
    \frac{\MI\left(\yrep, \lambdav\mid \yv,\thetav\right)}{\MI_{\b}\left(\yrep, \thetav\mid \yv\right)} \geq \eta_{\p}.
  \end{equation}

  First suppose that falsifiability is nondecreasing in the sense that
  \begin{equation}
    \label{eq:tmi2}
    \fmim = -\MI\left(\yrep, (\thetav,\lambdav)\mid \yv\right) \geq -\MI_{\b}\left(\yrep,\thetav\mid \yv\right) = \fmim_{\b}.
  \end{equation}

Then we have that
\begin{align}
  \imim = \MI\left(\yv,\thetav\right) &\leq (1-\eta_{\p})^{-1}\MI\left(\yrep, \thetav\mid \yv\right)\label{eq:tmi3}\\
                              &= (1-\eta_{\p})^{-1}\left[\MI\left(\yrep,(\thetav,\lambdav)\mid \yv\right) - \MI\left(\yrep,\lambdav\mid \yv,\thetav\right) \right] \label{eq:tmi4}\\
                              &\leq (1-\eta_{\p})^{-1}\left[\MI_{\b}\left(\yrep,\thetav\mid \yv\right) - \eta_{\p}\MI_{\b}\left(\yrep,\thetav\mid\yv\right)\right]\label{eq:tmi5}\\
                              &= \MI_{\b}\left(\yrep, \thetav\mid\yv\right)\nonumber\\
  &\leq \MI_{\b}\left(\yv,\thetav\right) = \imim_{\b}.\label{eq:tmi6}
\end{align}

In the above, \eqref{eq:tmi3} follows directly from Lemma \ref{lem:main_it_lem}, \eqref{eq:tmi4} follows from the chain rule for (conditional) mutual information, \eqref{eq:tmi5} follows from the nondecreasing falsification assumption \eqref{eq:tmi2} (for the first term) as well as the core assumption \eqref{eq:tmi1} (for the second term), and finally \eqref{eq:tmi6} follows again by Lemma \ref{lem:main_it_lem}.

Next, suppose that identification is nondecreasing in the sense that
\begin{equation}
  \label{eq:tmi7}
  \imim = \MI\left(\yv,\thetav\right) \geq \MI_{\b}\left(\yv,\thetav\right) = \imim_{\b}.
\end{equation}

With this assumption, we have
\begin{align}
 - \fmim = \MI\left(\yrep, (\thetav,\lambdav)\mid \yv\right) &= \MI\left(\yrep,\thetav\mid\yv\right) + \MI\left(\yrep,\lambdav\mid\yv,\thetav\right)\label{eq:tmi8}\\
                                                    &\geq (1-\eta_{\p}) \MI\left(\yv,\thetav\right) + \eta_{\p}\MI_{\b}\left(\yrep,\thetav\mid\yv\right)\label{eq:tmi9}\\
                                                    &\geq (1-\eta_{\p}) \MI_{\b}\left(\yv,\thetav\right) + \eta_{\p}\MI_{\b}\left(\yrep,\thetav\mid\yv\right)\label{eq:tmi10}\\
                                                    &\geq (1-\eta_{\p}) \MI_{\b}\left(\yrep,\thetav\mid\yv\right) + \eta_{\p}\MI_{\b}\left(\yrep,\thetav\mid\yv\right)\label{eq:tmi11}\\
  &=  \MI_{\b}\left(\yrep,\thetav\mid\yv\right) = -\fmim_{\b}\nonumber.
\end{align}

In the above, \eqref{eq:tmi8} follows from the chain rule for conditional mutual information, \eqref{eq:tmi9} follows from Lemma \ref{lem:main_it_lem} (for the first term) and from the core assumption \eqref{eq:tmi1} (for the second term), \eqref{eq:tmi10} follows from the nondecreasing identification assumption \eqref{eq:tmi7}, and finally \eqref{eq:tmi11} follows again from Lemma \ref{lem:main_it_lem}, completing the proof of the theorem.

\end{proof}

\section{Decompositions of \texorpdfstring{$\imim$}{IMI} and \texorpdfstring{$\fmim$}{FMI}}

We present a pair of simple decompositions for the $\imim$ and $\fmim$. These decompositions establish a certain bias towards falling $\imim$ and $\fmim$ under model expansion. They will also be used in the next section to provide another perspective on the analysis of the unknown variance example presented in Section 4.2.

\begin{lemma}[Decomposition of $\imim$]\label{lem:mi_decomp}
For base model $\p_{\b}$ and expanded model $\p$, we have
\begin{equation}
  \label{eq:mi_decomp}
  \imim = \imim_{\b} + \ddelid + \daddid,
\end{equation}
where we define
\begin{align*}
  \ddelid &= \E_{\lambdav \sim \p(\lambdavc)}\left[\MI_{\p(\yvc,\thetavc\mid\lambdav)}\left( \yv,\thetav \right) - \MI_{\p(\yvc,\thetavc\mid\lambdav = \lambdavc_0)}\left( \yv,\thetav\right)\right]\\
  \daddid &= \MI_{\p(\yvc,\thetavc,\lambdavc)}\left(\thetav,\lambdav\right) - \MI_{\p(\yvc,\thetavc,\lambdavc)}(\thetav,\lambdav\mid\yv).
\end{align*}
\end{lemma}
\begin{proof}
Using the chain rule for mutual information (Lemma \ref{lem:it_chain}) twice, we have that
\begin{equation}
  \label{eq:imi_chain}
  \MI\left(\yv, \lambdav\mid\thetav\right) + \MI\left(\yv, \thetav\right) = \MI\left(\yv,(\thetav,\lambdav)\right) = \MI\left(\yv, \thetav\mid\lambdav\right) + \MI\left(\yv,\lambdav\right).
\end{equation}
Rearraging, this is equivalent to
\begin{equation}
  \label{eq:imi_rearr}
  \imim = \MI\left(\yv, \thetav\right) = \MI\left(\yv, \thetav\mid\lambdav\right) + \left[\MI\left(\yv,\lambdav\right) - \MI\left(\yv, \lambdav\mid\thetav\right)\right].
\end{equation}
Again, a double application of Lemma \ref{lem:it_chain} gives
\begin{equation}
  \label{eq:imi_chain2}
  \MI\left(\yv, \lambdav\right) + \MI\left(\thetav,\lambdav\mid\yv\right) = \MI\left((\yv,\thetav),\lambdav\right) = \MI\left(\thetav,\lambdav\right) + \MI\left(\yv,\lambdav\mid\thetav\right).
\end{equation}
Rearranging, we see that
\begin{equation}
  \label{eq:imi_rearr2}
  \MI\left(\yv,\lambdav\right) - \MI\left(\yv, \lambdav\mid\thetav\right) = \MI\left(\thetav,\lambdav\right) - \MI\left(\thetav,\lambdav\mid\yv\right) = \daddid.
\end{equation}
Combining \eqref{eq:imi_rearr} and \eqref{eq:imi_rearr2}, we have that
\begin{equation}
  \imim = \MI\left(\yv, \thetav\mid\lambdav\right) + \daddid.
\end{equation}
Now, it is easy to see by Lemma \ref{lem:it_div} that $\MI\left(\yv,\thetav\mid\lambdav\right) =\E_{\lambdav \sim \p(\lambdavc)}\MI_{\p(\yvc,\thetavc\mid\lambdav)}\left(\yv, \thetav\right)$. Adding and subtracting $\MI_{\p(\yvc,\thetavc\mid\lambdav = \lambdav_0)}\left(\yv, \thetav\right) = \imim_{\b}$ thus gives
\begin{equation}
  \imim = \imim_{\b} + \E_{\lambdav\sim \p(\lambdavc)}\left[\MI_{\p(\yvc,\thetavc\mid\lambdav)}\left(\yv, \thetav\right) - \MI_{\p(\yvc,\thetavc\mid\lambdav = \lambdavc_0)}\left(\yv, \thetav\right)\right] + \daddid.
\end{equation}
Recognizing the second term on the right-hand side as the definition of $\ddelid$ gives the desired result.
\end{proof}

The $\ddelid$ term is the difference in the amount of information about $\thetav$ contained in $\yv$ given $\lambdav$ (averaging over $p(\lambdav)$) and given $\lambdav = \lambdavc_0$. Whether $\ddelid$ is positive or negative depends on the model. The $\daddid$ term is the difference in the amount of information $\lambdav$ provides about $\thetav$ before and after observing $\yv$.
The effect of $\daddid$ depends on the prior. If $\thetav$ and $\lambdav$ are a priori independent, i.e. if $\p(\thetavc,\lambdavc) = \p(\thetavc)\p(\lambdavc)$, then $\MI_{\p(\yv,\thetav,\lambdav)}\left(\lambdav,\thetav\right) = 0$. In this case,
\begin{equation*}
  \daddid = - \MI_{\p(\yvc,\thetavc,\lambdavc)}(\thetav,\lambdav\mid\yv) \leq 0,
\end{equation*}
with equality if and only if $\p(\thetavc,\lambdavc\mid\yvc) = \p(\thetavc\mid\yvc)p(\lambdavc\mid \yvc)$ for almost all $\yvc$. If we do have this posterior independence, then in fact we also have $\ddelid=0$, and \eqref{eq:mi_decomp} implies that $\imim$ will be unchanged from base to expanded model. Such posterior independence only occurs when the likelihood also factorizes over $\thetavc$ and $\lambdavc$ -- a rare occurrence in natural model expansions. Thus, when we have prior independence between $\thetav$ and $\lambdav$, we regard $\daddid$ as creating a downward bias on identifiability.
When $\thetav$ and $\lambdav$ are \textit{not} independent under the prior, the effect of $\daddid$ is more subtle. Some such expansions are able to escape the conclusions of Theorem 1 (e.g. the example presented in Section 4.3).

The corresponding decomposition for the $\fmim$ has a similar form.

\begin{lemma}[Decomposition of $\fmim$]\label{lem:fmi_decomp}
  For base model $\p_{\b}$ and expansion $\p$, we have
\begin{equation}
  \label{eq:fmi_decomp}
  \fmim = \fmim_{\b} +\ddelfa + \daddfa,
\end{equation}
where we define
\begin{align*}
  \ddelfa &= -\E_{\lambdav \sim \p(\lambdavc)}\left[ \MI_{\p(\yvc,\yrepc,\thetavc\mid\lambdav)}(\yrep,\thetav\mid\yv) - \MI_{\p(\yvc,\yrepc,\thetavc\mid\lambdav = \lambdavc_0)}(\yrep,\thetav\mid\yv) \right],\\
  \daddfa &= -\MI_{\p}\left(\yrep,\lambdav\mid\yv\right).
\end{align*}
\end{lemma}
\begin{proof}
Using the chain rule for mutual information (Lemma \ref{lem:it_chain}), we have that
\begin{equation}\label{eq:fmi_chain}
  \fmim = -\MI\left(\yrep, (\thetav,\lambdav)\mid \yv\right) = - \MI\left(\yrep, \thetav\mid \yv,\lambdav\right) - \MI\left(\yrep,\lambdav\mid\yv\right).
\end{equation}
Note that the last term is exactly $\daddfa$.
Using Lemma \ref{lem:it_div}, it is easy to see that
\begin{equation}
  \label{eq:it_cond_rep}
  \MI\left(\yrep, \thetav\mid \yv,\lambdav\right) = \E_{\lambdav\sim\p(\lambdavc)}\left[\MI_{\p(\yvc,\yrepc,\thetavc\mid\lambdav)}\left(\yrep,\thetav\mid\yv\right)\right].
\end{equation}
Now we add and subtract $-\MI_{\p(\yvc,\yrepc,\thetavc\mid\lambdav = \lambdavc_0)}\left(\yrep,\thetav\mid\yv\right)$ and note that this term is exactly $-\MI_{\b}\left(\yrep,\thetav\mid\yv\right) =  \fmim_{\b}$ by the definition of model expansion. The right-hand side of \eqref{eq:fmi_chain} becomes
\begin{equation}
  \fmim_{\b} -\E_{\lambdav\sim\p(\lambdavc)}\left[\MI_{\p(\yvc,\yrepc,\thetavc\mid\lambdav)}\left(\yrep,\thetav\mid\yv\right) - \MI_{\p(\yvc,\yrepc,\thetavc\mid\lambdav = \lambdavc_0)}\left(\yrep,\thetav\mid\yv\right)\right] + \daddfa.
\end{equation}
Recognizing the second term on the right-hand side as $\ddelfa$ gives the result.
\end{proof}

As with $\ddelid$ in \eqref{eq:mi_decomp}, $\ddelfa$ can be positive or negative depending on the model. Unlike $\daddid$ however, we always have $\daddfa \leq 0$, so $\fmim$ is always biased downward.

\section{Computations and Additional Details for Examples in Section 4}

We present computations and additional details for the information-theoretic quantities given in our three worked examples.

\subsection{Example 1: Linear Regression}

In this example, the multivariate normal form of the posterior permits explicit calculations of our information-theoretic quantities.

\subsubsection{Expression for \texorpdfstring{$\imim$}{IMI}}

First we derive the expression 
\[
  \imim = \imim_{\b}- \frac{1}{2}\log\frac{1+\tau}{\pi(\tau) + \tau}.
\]
For the expanded regression model, the marginal posterior of $\thetav$ is normal with covariance matrix $\left(\left[\Xm^T\Xm + \tau\imat_{k+1}\right]^{-1}\right)_{-(k+1)}$, where, for a matrix $\mathbb{M}\in\RR^{(k+1)\times (k+1)}$, $\mathbb{M}_{-(k+1)}$ denotes the $k\times k$ submatrix obtained from $\mathbb{M}$ by removing the $(k+1)^{\mathrm{th}}$ row and column, and where $\imat_{k+1}$ is the $(k+1)\times (k+1)$ identity matrix. Writing $\Pi_{k+1} = \Xm^T\Xm + \tau\imat_{k+1}$, the IMI for this expanded model is given as
\begin{equation}
  \label{eq:imi_reg_exp}
  \imim = h(\thetav) - h(\thetav\mid\yv) = -\frac{k}{2}\log\tau - \frac{1}{2}\log \det  \left(\left[\Pi_{k+1}^{-1}\right]_{-(k+1)}\right).
\end{equation}

In order to simplify the determinant on the right-hand side, we first express $\Pi_{k+1}$ as a block matrix:
\begin{equation}
  \label{eq:lambda_block}
  \Pi_{k+1} = \left[\begin{matrix}
  \Pi_k & v \\ v^T & 1 + \tau
  \end{matrix}\right],
\end{equation}
where $\Pi_k = \left[\Pi_{k+1}\right]_{-(k+1)} = \Xm_{\b}^T\Xm_{\b} + \tau\imat_k$, and $v = \Xm_{\b}^T\xv_{k+1}$. We note that $\Pi_{k}$ is the posterior precision of $\thetav$ in the base regression model. Now, inverting the block matrix \eqref{eq:lambda_block} yields
\begin{equation}
  \label{eq:marg_cov_expression}
  \left[\Pi_{k+1}^{-1}\right]_{-(k+1)} = \left[\Pi_k - \frac{vv^T}{1+\tau}\right]^{-1}. 
\end{equation}  
Now, usual expressions for the determinant of a block matrix yield
\begin{equation}
  \label{eq:block_det}
  \begin{split}
    \det\left(\Pi_{k+1}\right) &= \det\left(\Pi_k\right)\left(\pi(\tau) + \tau\right),\\
    \det\left(\Pi_{k+1}\right) &= \det\left(\Pi_k - \frac{vv^T}{1 + \tau}\right)\left(1 + \tau\right),
  \end{split}
\end{equation}
where $\pi(\tau)$ is given by
\begin{equation}
  \label{eq:pi_tau}
  \pi(\tau) = \xv_{k+1}^T\left[\imat_k - \Xm_{\b}\left(\Xm_{\b}^T\Xm_{\b} + \tau\imat_k\right)^{-1}\Xm_{\b}^T\right]\xv_{k+1}.
\end{equation}
Combining \eqref{eq:block_det} with \eqref{eq:marg_cov_expression} and using the fact that $\det(\mathbb{M}^{-1}) = 1/\det(\mathbb{M})$ for all invertible matrices $\mathbb{M}$, we arrive at
\begin{equation}
  \label{eq:det_expression}
  \det\left(\left[\Pi_{k+1}^{-1}\right]_{-(k+1)}\right) = \det\left(\Pi_k^{-1}\right)\left[\frac{1 + \tau}{\pi(\tau) + \tau}\right].
\end{equation}
 Now combining \eqref{eq:det_expression} with \eqref{eq:imi_reg_exp}, we get
 \begin{align}
  \imim &= -\frac{k}{2}\log\tau -\frac{1}{2}\log\det\left(\Pi_k^{-1}\right) - \frac{1}{2}\log\frac{1+\tau}{\pi(\tau) + \tau}\nonumber\\
  &= \imim_{\b}- \frac{1}{2}\log\frac{1+\tau}{\pi(\tau) + \tau}\label{eq:base_imim},
 \end{align}
 where \eqref{eq:base_imim} follows from the fact that the base model posterior is multivariate normal with covariance matrix $\Pi_k^{-1}$.

\subsubsection{Expression for \texorpdfstring{$\fmim$}{FMI}}

Next, we derive the following expression for $\fmim$:
\[
 \fmim = \fmim_{\b} - \frac{1}{2}\log\frac{2\pi(\tau) + \tau}{\pi(\tau) + \tau}.
\]
First, we decompose the $\fmim$ of the expanded model as follows:
\begin{equation}
  \label{eq:decomp_fmim}
  \fmim = -h\left((\thetav,\lambdav)\mid \yv\right) + h\left((\thetav,\lambdav)\mid \yv,\yrep\right) = -\frac{1}{2}\log\det\left(\Pi_{k+1}^{-1}\right) + \frac{1}{2}\log\det\left(\widetilde{\Pi}_{k+1}^{-1}\right),
\end{equation}
where $\Pi_{k+1}$ is defined as in the last section, and $\widetilde{\Pi}_{k+1} = 2\Xm^T\Xm + \tau\imat_{k+1}$. Now, again using block determinant expressions, we have
\begin{equation}
  \label{eq:fmim_block_det}
  \begin{split}
    \det\left(\Pi_{k+1}\right) &= \det\left(\Pi_k\right)\left(\pi(\tau) + \tau\right),\\
    \det\left(\widetilde{\Pi}_{k+1}\right) &= \det\left(\widetilde{\Pi}_k\right)\left(2\pi(\tau/2) + \tau\right),
  \end{split}
\end{equation}
where $\widetilde{\Pi}_k = 2\Xm_{\b}^T\Xm_{\b} + \tau\imat_k$. Combining \eqref{eq:fmim_block_det} with \eqref{eq:decomp_fmim}, we get
\begin{align}
  \fmim &= -\frac{1}{2}\log\det\left(\Pi_k^{-1}\right) + \frac{1}{2}\log\det\left(\widetilde{\Pi}^{-1}_{k}\right) - \frac{1}{2}\log\frac{2\pi(\tau/2) + \tau}{\pi(\tau)+\tau}\\
  &=\fmim_{\b} - \frac{1}{2}\log\frac{2\pi(\tau/2) + \tau}{\pi(\tau)+\tau}.\label{eq:fmim_conc},
\end{align}
where \eqref{eq:fmim_conc} follows from the fact that $\Pi_k^{-1}$ and $\widetilde{\Pi}_k^{-1}$ are the covariance matrices of $\p_{\b}(\thetavc\mid\yvc)$ and $\p_{\b}(\thetavc\mid\yvc,\yrepc)$ respectively.

\subsubsection{Expression for \texorpdfstring{$\expratio$}{expansion ratio}}

We now derive the expression $\expratio = \frac{1}{k}$ in the special case where $\Xm^T\Xm = \imat_{k+1}$. First, recall that the expansion ratio is given by the formula
\begin{equation}
  \label{eq:er_recall}
  \expratio = \frac{\MI\left(\lambdav, \yrep\mid \thetav, \yv\right)}{\MI_{\b}\left(\thetav,\yrep\mid\yv\right)}.
\end{equation}
We can express the numerator of \eqref{eq:er_recall} as 
\begin{equation}
  \label{eq:expr_num}
  \MI\left(\lambdav, \yrep\mid \thetav, \yv\right) = h\left(\lambdav\mid \yv,\thetav\right) - h\left(\lambdav\mid\yv,\yrep,\thetav\right) = \frac{1}{2}\E\log\frac{\mathrm{Var}(\lambdav\mid\yv,\thetav)}{\mathrm{Var}(\lambdav\mid\yv,\yrep,\thetav)}.
\end{equation}
Recall that if $\Pi$ is a precision matrix for a random vector $\xv\in\RR^{k+1}$, then
\begin{equation}
  \label{eq:prec_cond_var_rel}
  1/\Pi_{k+1,k+1} = \mathrm{Var}([\xv]_{k+1}\mid [\xv]_1,\ldots,[\xv]_k).
\end{equation} 
Applying this identity to $\xv = (\thetav,\lambdav)$ with $\Pi$ equal to $\Pi_{k+1}$ and $\widetilde{\Pi}_{k+1}$ (defined in the previous section), we obtain
\begin{align}
  \MI\left(\lambdav, \yrep\mid \thetav, \yv\right) &= \frac{1}{2}\E\log\frac{\left[\widetilde{\Pi}_{k+1}\right]_{k+1,k+1}}{\left[\Pi_{k+1}\right]_{k+1,k+1}}\nonumber\\
  &= \frac{1}{2}\log\frac{2\xv_{k+1}^T\xv_{k+1} + \tau}{\xv_{k+1}^T\xv_{k+1} + \tau}\label{eq:rnum_1}\\
  &= \frac{1}{2}\log\frac{2 + \tau}{1 + \tau}.\label{eq:rnum_2},
\end{align}
where \eqref{eq:rnum_1} follows by the definitions of $\Pi_{k+1}$ and $\widetilde{\Pi}_{k+1}$, and \eqref{eq:rnum_2} follows from the fact that $\Xm^T\Xm = \imat_{k+1}$.

Next we express the denominator of \eqref{eq:er_recall} as 
\begin{equation}
  \label{eq:expr_den}
  \MI_{\b}\left(\thetav,\yrep\mid\yv\right) = h_{\b}\left(\thetav\mid \yv\right) - h_{\b}\left(\thetav\mid\yv,\yrep\right) = \frac{1}{2}\log\frac{\det\widetilde{\Pi}_k}{\det\Pi_k},
\end{equation}
where we have used the fact that $\Pi_k$ and $\widetilde{\Pi}_k$ are the posterior precision matrices for the base model given $\yv$ and $(\yv,\yrep)$ respectively, as well as the fact that $\det\left(\mathbb{M}^{-1}\right) = 1 / \det\left(\mathbb{M}\right)$ for any invertible $\mathbb{M}$.

Now we observe that
\begin{equation}
  \label{eq:rexp_den_simpl}
  \Pi_k = \Xm_{\b}^T\Xm_{\b} + \tau\imat_k = (1+\tau)\imat_k,\qquad \widetilde{\Pi}_k = 2\Xm_{\b}^T\Xm_{\b} + \tau\imat_k = (2 + \tau)\imat_k
\end{equation}
Combining \eqref{eq:rexp_den_simpl} with \eqref{eq:expr_den} yields
\begin{equation}
  \label{eq:rexp_den_final}
  \MI_{\b}\left(\thetav,\yrep\mid\yv\right) = \frac{1}{2}\log\frac{(2+\tau)^k}{(1+\tau)^k} = \frac{k}{2}\log\frac{2+\tau}{1 + \tau}.
\end{equation}

Finally, combining \eqref{eq:rexp_den_final}, \eqref{eq:rnum_2}, and \eqref{eq:er_recall} yields
\begin{equation}
  \label{eq:rexp_final}
  \expratio = \frac{1}{k},
\end{equation} 
as claimed.

\subsubsection{Expression for \texorpdfstring{$\eta^*_{\p}$}{contraction coefficient lower bound}}

Finally, we show that $\eta^*_{\p} = 2 - \frac{\log(1 + 2\tau^{-1})}{\log(1 + \tau^{-1})}$. First, recall that
\begin{equation}
  \label{eq:eta_recall}
  \eta^*_{\p} = \frac{\MI(\yv,\yrep)}{\MI(\thetav,\yv)} = 1 - \frac{\MI\left(\thetav,\yrep\mid\yv\right)}{\MI\left(\thetav,\yv\right)},
\end{equation}
where the second inequality follows from the chain rule for mutual information and the fact that $\MI\left(\yrep, (\yv,\thetav)\right) = \MI\left(\yrep, \thetav\right) = \MI\left(\yv,\thetav\right)$ by the conditional independence of $\yrep$ and $\yv$ given $\thetav$.

For the term $\MI\left(\thetav,\yrep\mid\yv\right)$ on the right-hand side of \eqref{eq:eta_recall}, we observe that the chain rule for mutual information gives us that
\begin{equation}
  \label{eq:eta_num_decomp}
  \MI\left(\thetav,\yrep\mid\yv\right) = \MI\left((\thetav,\lambdav), \yrep\mid \yv\right) - \MI\left(\lambdav, \yrep\mid \yv,\thetav\right).
\end{equation}
Using the fact that $\Pi_{k+1}$ and $\widetilde{\Pi}_{k+1}$ are the posterior precision matrices of $(\thetav,\lambdav)$ given $\yv$ and $(\yv,\yrep)$ respectively, as well as the relation between the precision and conditional variances \eqref{eq:prec_cond_var_rel}, we express \eqref{eq:eta_num_decomp} as
\begin{align}
  \MI\left(\thetav,\yrep\mid\yv\right) &= \frac{1}{2}\log\frac{\det\widetilde{\Pi}_{k+1}}{\det\Pi_{k+1}} -\frac{1}{2}\log\frac{2 + \tau}{1+\tau}\nonumber\\
  &= \frac{k}{2}\log\frac{2 + \tau}{1 + \tau}\label{eq:eta_num_simpl}\\
  &= \frac{k}{2}\log\frac{1 + 2\tau^{-1}}{1 + \tau^{-1}},\label{eq:eta_num_final}
\end{align}
where \eqref{eq:eta_num_simpl} follows from the fact that $\Pi_{k+1} = (1 + \tau)\imat_{k+1}$, and $\widetilde{\Pi}_{k+1} = (2 + \tau)\imat_{k+1}$.

Next for the term $\MI\left(\thetav,\yv\right)$ on the right-hand side of \eqref{eq:eta_recall}, we proceed similarly, writing
\begin{align}
  \MI\left(\thetav,\yv\right) &= \MI\left((\thetav,\lambdav), \yv\right) - \MI\left(\lambdav, \yv\mid\thetav\right)\nonumber\\
  &= \frac{1}{2}\log\frac{\det\Pi_{k+1}}{\det (\tau\imat_{k+1})} - \frac{1}{2}\log\frac{1 + \tau}{\tau}\label{eq:eta_den_decomp}\\
  &= \frac{k+1}{2}\log\frac{1+\tau}{\tau} - \frac{1}{2}\log\frac{1+\tau}{\tau}\nonumber\\
  &= \frac{k}{2}\log\frac{1+\tau}{\tau}\nonumber\\
  &= \frac{k}{2}\log (1 + \tau^{-1}),\label{eq:eta_denom_final}
\end{align}
where \eqref{eq:eta_den_decomp} follows from the fact that $\tau\imat_{k+1}$ and $\Pi_{k+1}$ are the prior and posterior precision matrices of $(\thetav,\lambdav)$ respectively, and $\tau$ and $1+\tau$ are prior and posterior conditional variances of $\lambdav$ given $\thetav$.

Finally, combining \eqref{eq:eta_denom_final}, \eqref{eq:eta_num_final}, and \eqref{eq:eta_recall}, we obtain
\begin{equation}
  \eta^*_{\p} = 1 - \frac{\log\left(1 + 2\tau^{-1}\right) - \log\left(1 + \tau^{-1}\right)}{\log\left(1 + \tau^{-1}\right)} = 2 - \frac{\log\left(1 + 2\tau^{-1}\right)}{\log\left(1 + \tau^{-1}\right)},
\end{equation}
as claimed.

\subsection{Example 2: Unknown Variance}

For this example, we illustrate the effect of expansion graphically using the decompositions presented in the previous section.

\begin{figure}[H]
  \centering
  \begin{tabular}{m{1.8in} m{0.4in} m{1.8in}}
    \includegraphics[scale = 0.42]{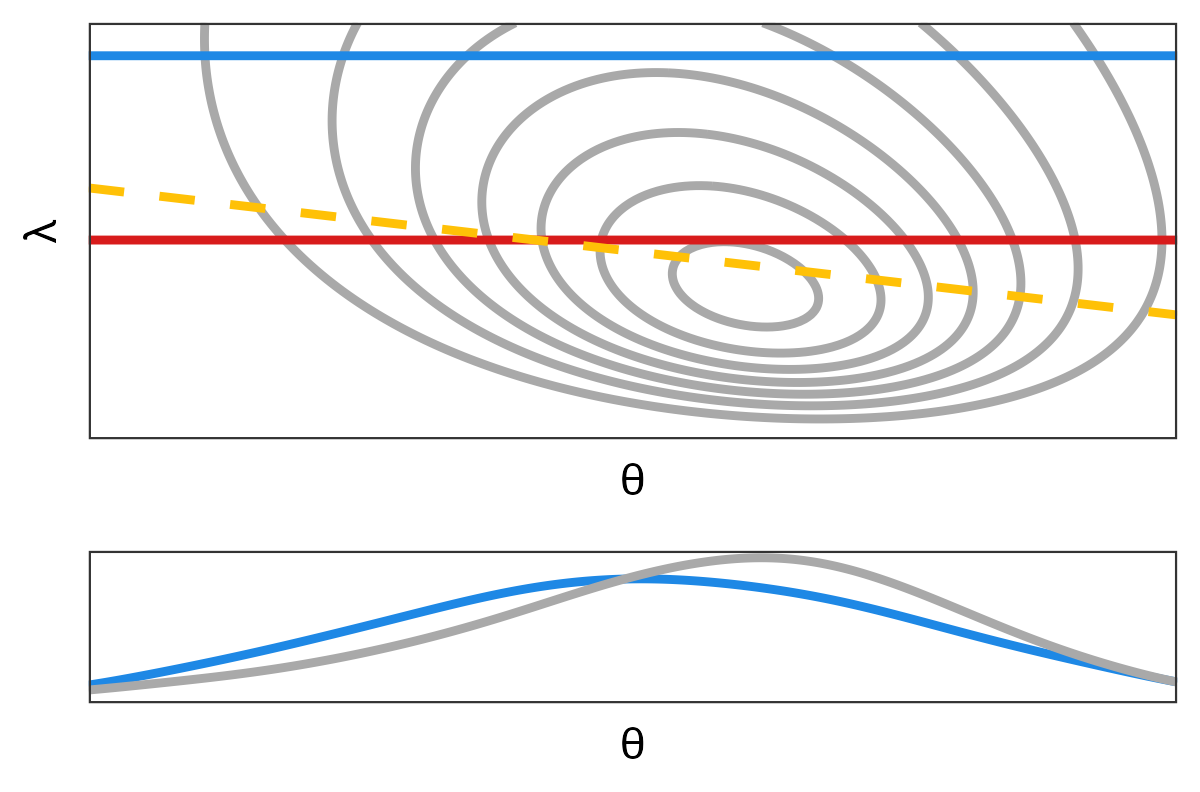} & $\textcolor{sigcolor}{\boldsymbol{\xrightarrow{-\daddid}}}$& \includegraphics[scale = 0.42]{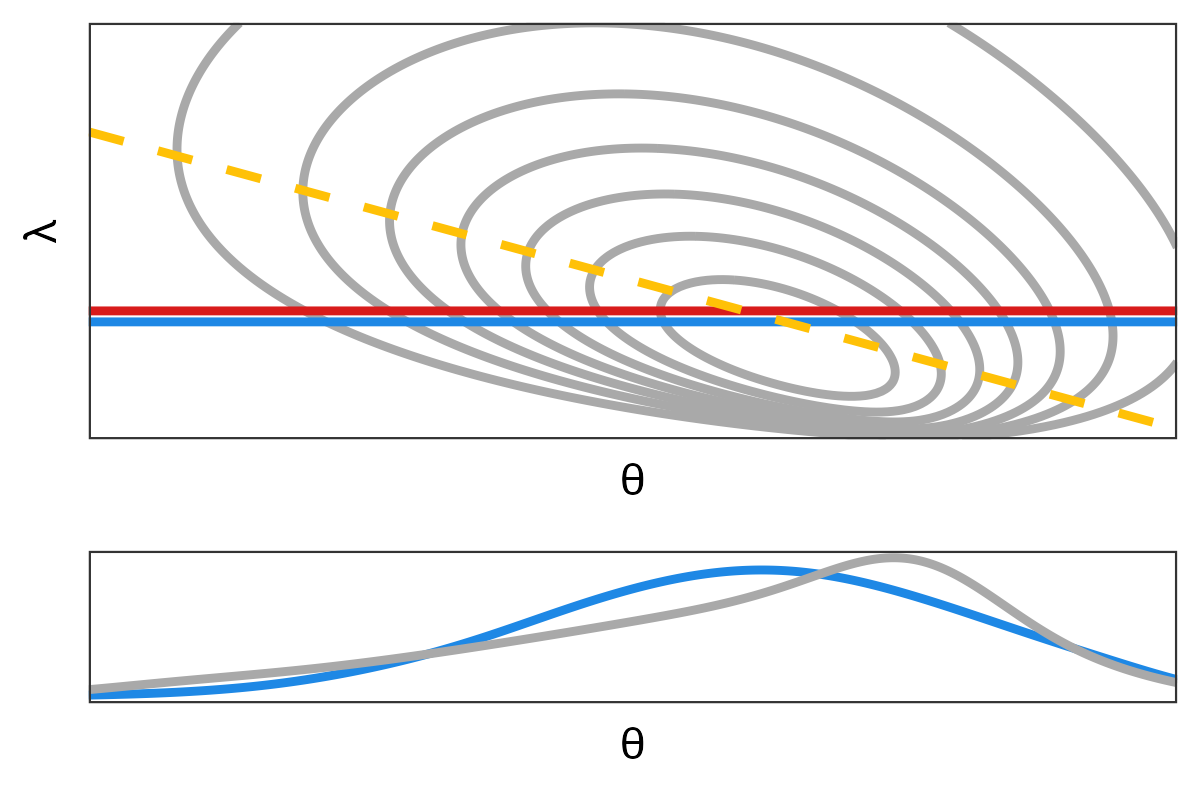}\\
    \vspace{0.1in}\hspace{0.9in}$\textcolor{delcolor}{\boldsymbol{\downarrow -\ddelid}}$\vspace{0.1in} && \vspace{0.1in}\hspace{0.9in}$\textcolor{delcolor}{\boldsymbol{\downarrow -\ddelid}}$\vspace{0.1in}\\
    \includegraphics[scale = 0.42]{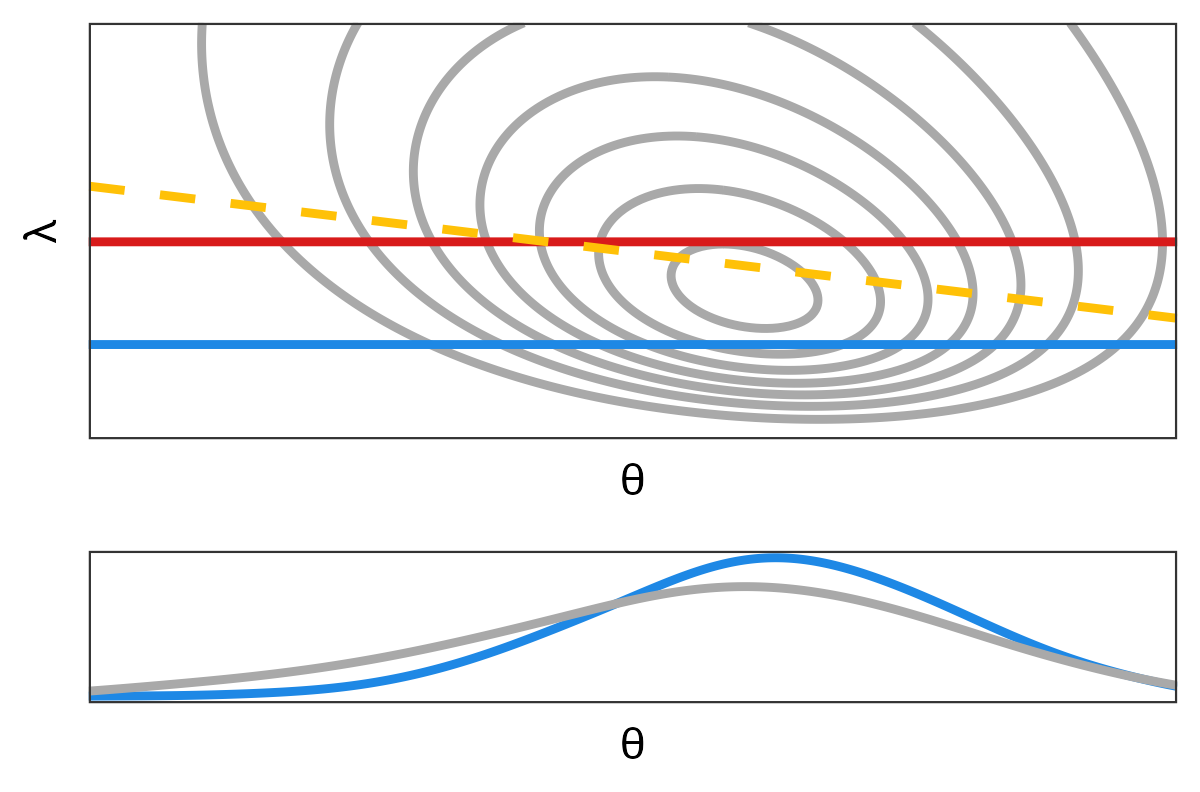} & $\textcolor{sigcolor}{\boldsymbol{\xrightarrow{-\daddid}}}$& \includegraphics[scale = 0.42]{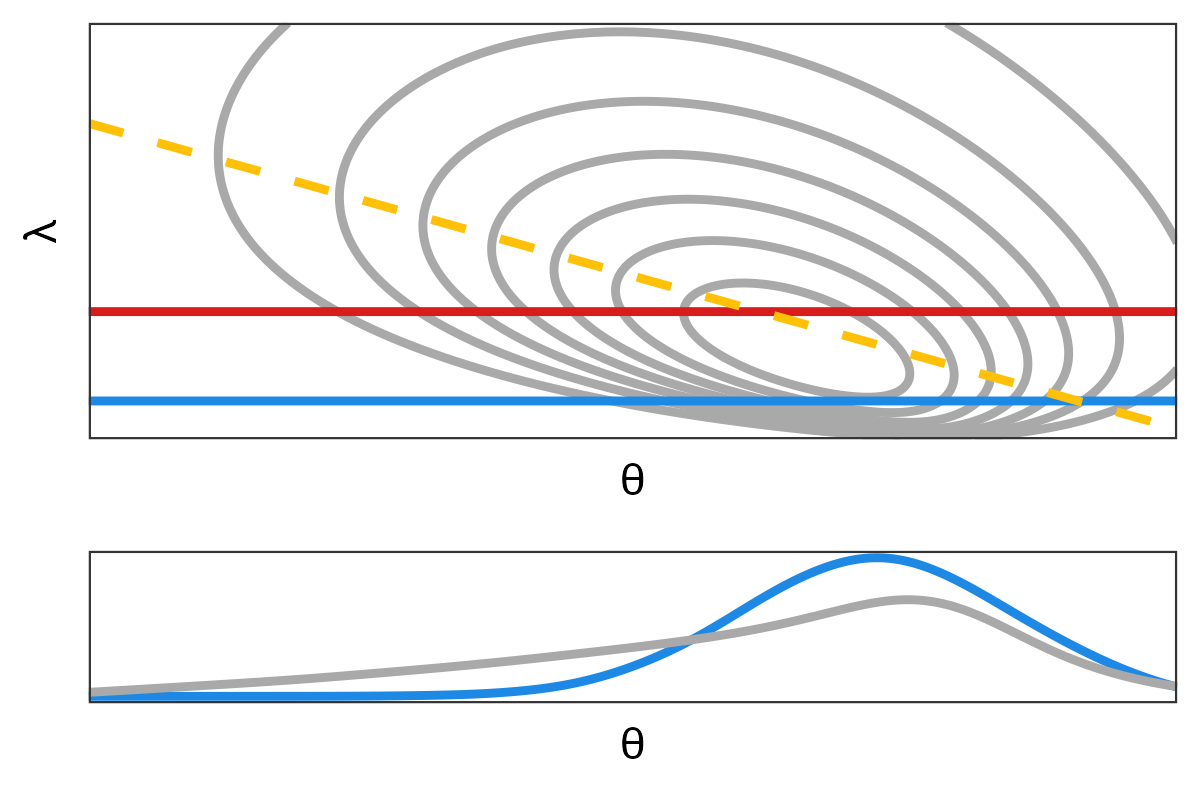}\\
  \end{tabular}
  \caption{Visualization of the effect of the terms $\ddelid$ and $\daddid$ defined in Lemma \ref{lem:mi_decomp} using the base and expanded models defined in the unknown variance example presented in Section 4.2.}
  \label{fig:imi_decomp}
\end{figure}

Figure \ref{fig:imi_decomp} shows the effect of varying the base and expanded models on the identifiability mutual information using its decomposition from Lemma \ref{lem:mi_decomp}. Each of the four panels corresponds to a different choice of base and expanded model (i.e. to a different choice of hyperparameters in (56) and (57)). In each panel, the top plot displays the posterior $\p(\thetavc,\lambdavc\mid \yvc)$ for a typical value of $\yvc$ from the prior predictive distribution $\p(\yvc)$. The bottom plot in each panel displays the marginal posterior distribution of $\thetav$ for both the base (blue) and expanded model (grey).

The three lines in top plots roughly illustrate the quantities $\ddelid$ and $\daddid$ defined in Lemma \ref{lem:mi_decomp}. The quantity $\ddelid$ measures how much better or worse $\thetav$ is identified in the expanded model conditional on $\lambdav$. We can thus get a rough sense of $\ddelid$ by comparing $\p(\thetavc\mid \yvc, \lambdavc_{0}) = \p_{\b}(\thetavc\mid \yvc)$ (indicated by the blue horizontal line) to $\p(\thetavc\mid \yvc, \lambdavc_{(0.5)})$ where $\lambdavc_{(0.5)}$ is the median of $\p(\lambdavc\mid \yvc)$ (indicated by the red horizontal line). In all four panels, the identification of $\thetav$ conditional on $\lambdav = \lambdavc$ decreases as $\lambdavc$ increases, as seen by the fact that the contour plots grow wider as $\lambdavc$ increases. Thus, we expect $\ddelid$ to be more negative (positive) the farther the red line is above (below) the blue line. As we pass from the top two panels to the bottom two panels, $\ddelid$ decreases (and $\daddid$ is unchanged), and in both cases we find the red line further above the blue line, as expected. The negative effect on overall identification is also confirmed by comparing the marginal posteriors of $\thetav$ in the bottom plots passing again from the top row to the bottom row.

The quantity $\daddid$, on the other hand, measures the posterior dependence between $\thetav$ and $\lambdav$. As $\daddid$ becomes more negative, $\thetav$ and $\lambdav$ become more dependent, decreasing the marginal identification of $\thetav$.
This dependence is indicated roughly by a regression of $\lambdav$ on $\thetav$, which is indicated by the yellow dashed line. Passing from the left column to the right column, $\daddid$ increases (while $\ddelid$ is unchanged), which is confirmed by the steeper slopes of the regression lines. And again the overall negative effect on the marginal identification of $\thetav$ is seen in the bottom plots, with the expanded posterior distributions becoming more spread out relative to the posterior distribution in the base model.

\begin{figure}[H]
  \centering
  \begin{tabular}{m{1.8in} m{0.5in} m{1.8in}}
    \includegraphics[scale = 0.42]{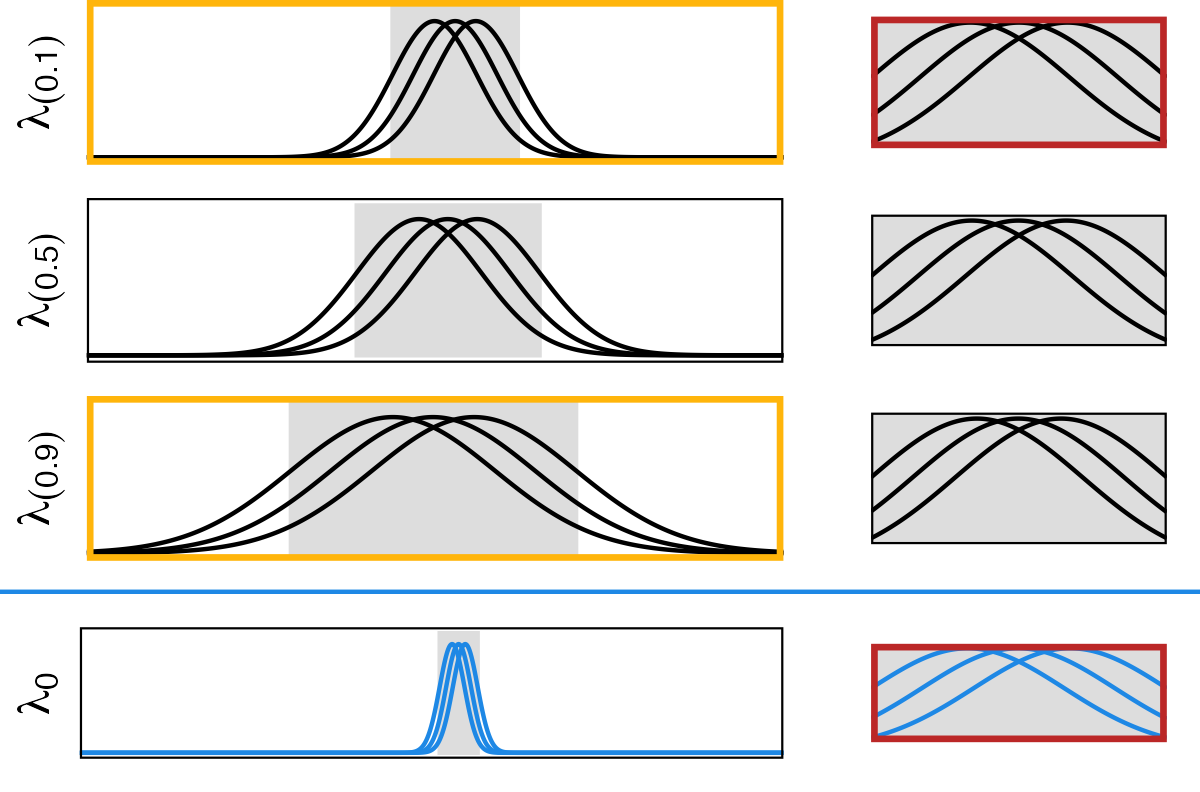} & $\textcolor{sigcolor}{\boldsymbol{\xrightarrow{-\daddfa}}}$& \includegraphics[scale = 0.42]{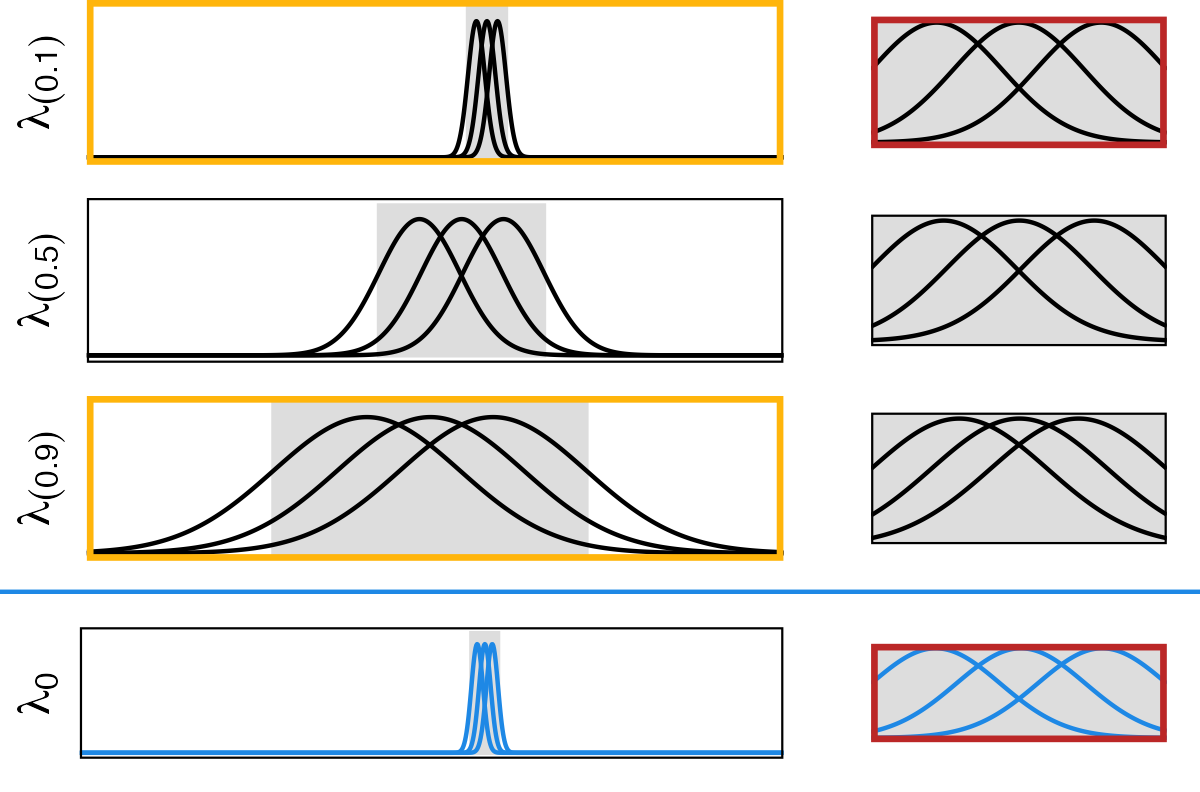}\\
    \vspace{0.1in}\hspace{0.9in}$\textcolor{delcolor}{\boldsymbol{\downarrow -\ddelfa}}$\vspace{0.1in} && \vspace{0.1in}\hspace{0.9in}$\textcolor{delcolor}{\boldsymbol{\downarrow -\ddelfa}}$\vspace{0.1in}\\
    \includegraphics[scale = 0.42]{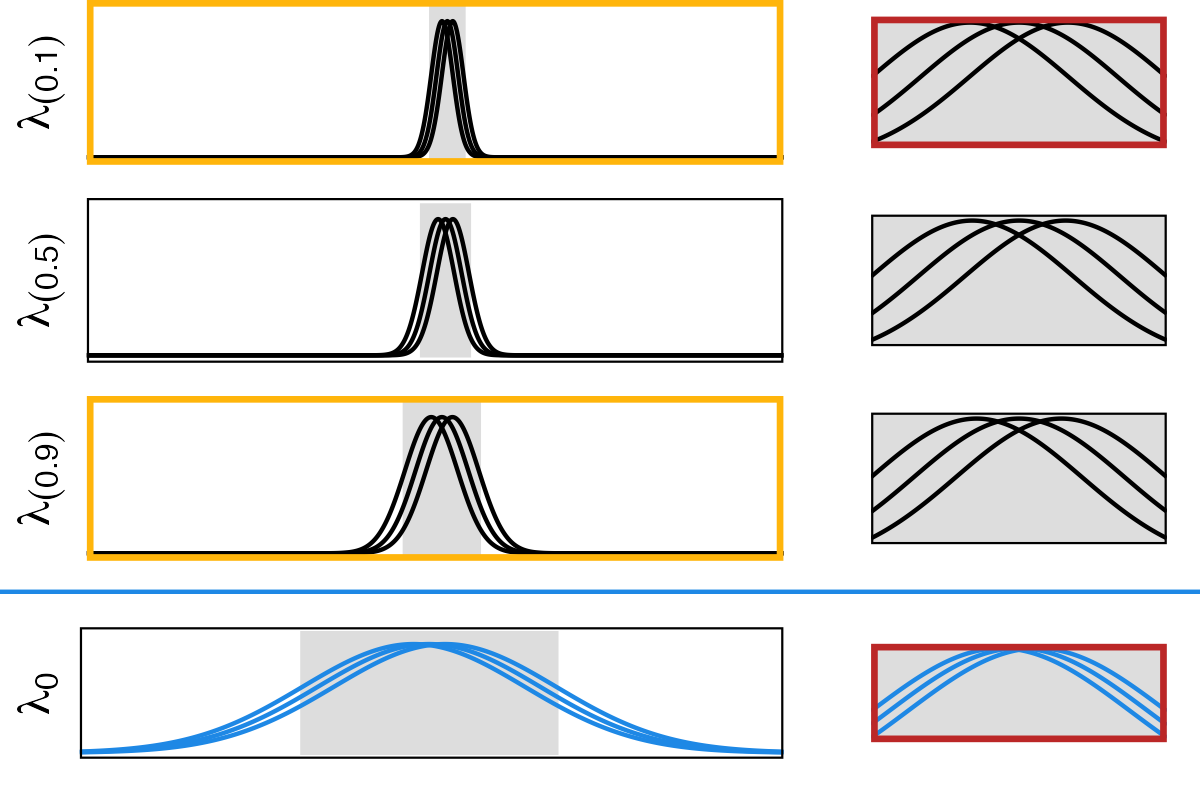} & $\textcolor{sigcolor}{\boldsymbol{\xrightarrow{-\daddfa}}}$& \includegraphics[scale = 0.42]{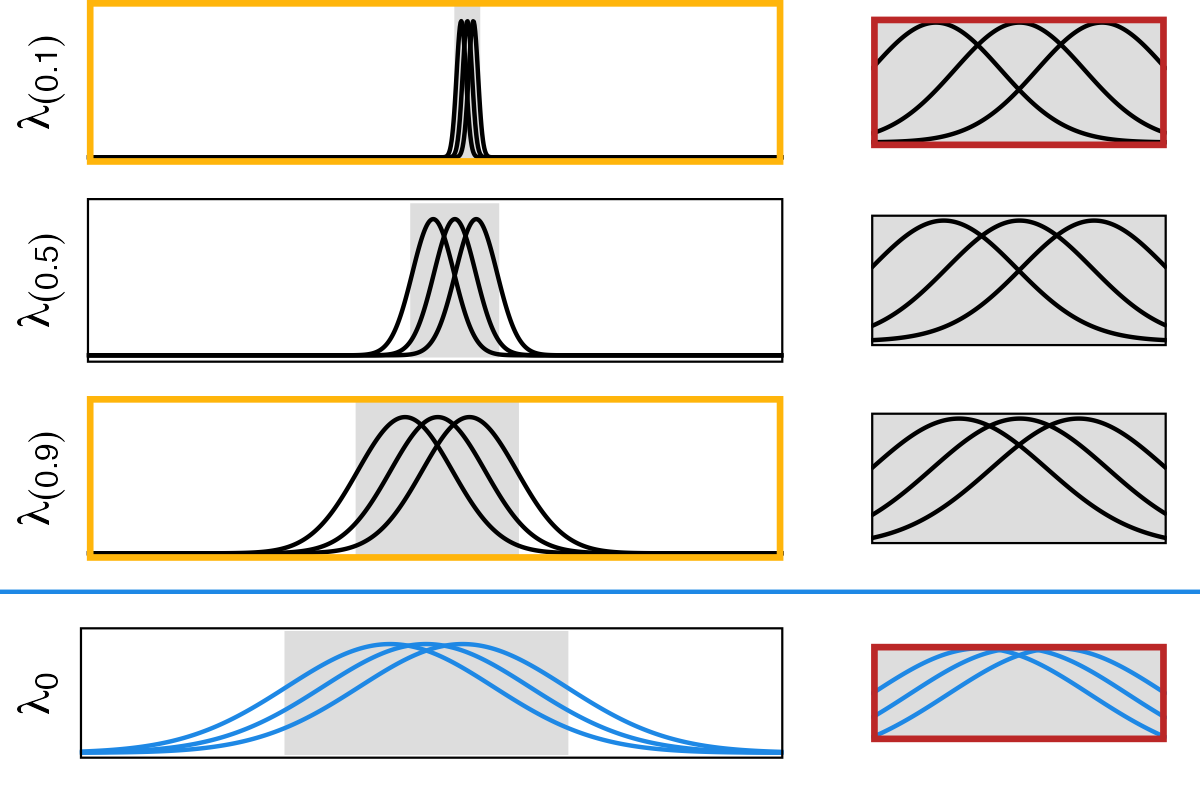}\\
  \end{tabular}
  \caption{Visualization of the effect of terms $\ddelfa$ and $\daddfa$ defined in Lemma \ref{lem:fmi_decomp} using the base and expanded models from the unknown variance example in Section 4.2.
}
  \label{fig:fmi_decomp}
\end{figure}

Figure \ref{fig:fmi_decomp} shows the effect of varying the base and expanded models on the identifiability mutual information using its decomposition from Lemma \ref{lem:fmi_decomp}.
Each of the four panels represents a different pair of base and expanded models $\p_{\b}$ and $\p$ (i.e. a different choice of hyperparameters in (56) and (57)). In each panel, each plot corresponds to a different choice of $\lambda$ (indicated to the left of each plot). For the top three plots (above the blue horizontal line), we select three posterior quantiles of $\lambdav$ under the expanded model. Specifically, we take $\lambda = \lambda_{(q)}$ for $q = 0.1, 0.5, 0.9$ (top to bottom), where $\lambda_{(q)}$ is the $q$ quantile of $p(\lambda \mid y)$, and where $y$ is taken to be a typical value under the prior predictive $\p(\yvc)$. The bottom plot (in blue) corresponds to $\lambda = \lambda_0$ (i.e. to the base model).

For each such $\lambda$, we plot (in the left, wider subplot) the sampling distributions $\p(y\mid \theta, \lambda)$ for three values of $\thetavc$ which represent the conditional posterior $\p(\theta\mid y,\lambda)$. Specifically, for $p=0.1, 0.5, 0.9$, we take $\theta_{(p)\mid\lambdavc}$, where denotes the $p$ quantile of $\p(\theta\mid y,\lambda)$. The regions highlighted in grey correspond to one standard deviation below and above the smallest and largest values of $\thetavc$ (i.e. to the intervals $(\thetavc_{(0.1)\mid\lambdavc} - \sqrt{\lambdavc}, \thetavc_{(0.9)\mid\lambdavc} + \sqrt{\lambdavc})$). These grey regions are zoomed in on in the right subplots to emphasize the relative (dis)similarity of the plotted distributions.

The quantity $\ddelfa$ from Lemma \ref{lem:fmi_decomp} represents the change from base to expanded model in the variability of the sampling distributions $\p(\yvc\mid\thetavc,\lambdavc)$ due to variation in $\thetavc$ (conditional on $\lambdavc$). This is represented in the figure by the dissimilarity between the three sampling distributions in each subplot. In particular, passing from the top row to the bottom row, $\ddelfa$ decreases (while $\daddfa$ is unchanged). This is seen in a comparison of the zoomed-in plots between base and expanded model. In the top row, conditional on the different $\lambdavc$, the sampling distributions show a similar level of dissimilarity between base and expanded models. But in the bottom row, the zoomed-in plots show that the conditional sampling distributions are much more varied in the expanded model than in the base model.

The quantity $\daddfa$ represents how much the sampling distributions $\p(\yvc\mid\lambdavc)$ vary in the expanded model as $\lambdavc$ varies (with $\thetavc$ now marginalized out). This is visible in the figure as the dissimilarity between the sampling distributions across different values of $\lambdavc$ (e.g. by comparing the first and third plots, highlighted in yellow, corresponding to $\lambdavc_{(0.1)}$ and $\lambdavc_{(0.9)}$).
Passing from the left column to the right column, $\daddfa$ decreases, but $\ddelfa$ is unchanged. This is seen in the greater variability of the sampling distribution scale between $\lambdavc_{(0.1)}$ and $\lambdavc_{(0.9)}$ in the right column relative to the left column. On the whole, passing from the top-left panel to the bottom-right panel, the overall effect is an increase in the diversity of sampling distributions under the posterior distribution.

\subsection{Example 3: Hierarchical Prior}
For this example, some important expressions can be computed analytically. This section presents the details of those computations.

\subsubsection{Partial Expression for \texorpdfstring{$\imim$}{IMI}}

We first derive an expression for $\imim_{\b}$. Using the conjugate form of the prior and likelihood, we know that the posterior distribution of $\thetav$ is normal with precision matrix $\left(\sigma_0^{-2} + n\right)\imat_2$. Consequently, we can write
\begin{equation}
  \label{eq:counter_base_imim}
  \imim_{\b} = h(\thetav) - h\left(\thetav\mid\yv\right) = \log\left(\frac{\sigma_0^{-2} + n}{\sigma_0^{-2}}\right) = \log\left(1 + n\sigma_0^2\right).
\end{equation}
Next, for the expanded model, we decompose the $\imim$ as follows:
\begin{equation}
  \label{eq:counter_imim_decomp}
  \imim = \MI\left(\thetav,\yv\right) = \MI\left((\thetav,\lambdav),\yv\right) = \MI\left(\thetav,\yv\mid\lambdav\right) + \MI\left(\lambdav,\yv\right),
\end{equation}
where the second equality follows from the fact that $\yv$ and $\lambdav$ are indpendent given $\thetav$. 

The second term on the right-hand side $\MI(\lambdav,\yv)$ cannot be evaluated analytically. Instead, we express this as $\MI(\lambdav,\yv) = h(\lambdav) - h(\lambdav\mid\yv)$. We use the closed-form formula for the entropy of a Beta distribution to evaluate $h(\lambdav)$ and estimate $h(\lambdav\mid\yv)$ using posterior samples of $\lambdav$ (averaged over many $\yv\sim p(\yvc)$). We note that estimating the entropy is tractable in this case since $\lambdav$ is one-dimensional and compactly supported.

The first term on the right-hand side of \eqref{eq:counter_imim_decomp} can be expressed as
\begin{align}
  \MI\left(\thetav,\yv\mid\lambdav\right) &= h\left(\thetav\mid\lambdav\right) - h\left(\thetav\mid\yv,\lambdav\right)\nonumber\\
  &= \E\left[h_{p(\thetavc\mid\lambdav)}\left(\thetav\right) - h_{p(\thetavc\mid\yv,\lambdav)}\left(\thetav\right)\right].\label{eq:counter_imim_decomp2}
\end{align}

Recalling that the prior covariance of $\thetav$ given $\lambdav=\lambdavc$ is 
\begin{equation}
  \label{eq:prior_cov_recall}
  \Sigma_0(\lambdavc, \sigma_0) = \frac{\sigma_0^2}{\sqrt{1-\lambdavc^2}}\left[
         \begin{matrix}
           1 & \lambdavc\\
           \lambdavc & 1
         \end{matrix}
       \right],
\end{equation}
it is easy to see that $\det\left(\Sigma_0(\lambdavc,\sigma_0)\right) = \sigma_0^4$ for all $\lambdavc\in [0,1)$, and thus
\begin{equation}
  \label{eq:counter_pr_ent}
  \E\left[ h_{\p(\thetavc\mid\lambdav)}\left(\thetav\right) \right] = \frac{1}{2}\log(2\pi e) + \frac{1}{2}\log\left(\sigma_0^4\right).
\end{equation}

Next we note that, conditional on $\lambdav$, the expanded model for $\thetav$ has a normal-normal conjugate form. Thus, the posterior precision matrix is given as 
\begin{equation}
  \label{eq:counter_post_prec}
  \Pi = \Sigma_0^{-1}\left(\lambdavc,\sigma_0\right) + n\imat_2.
\end{equation}
Using the standard inversion formula for $2\times 2$ matrices, we get
\begin{equation}
  \label{eq:counter_prior_prec}
  \Sigma^{-1}_0(\lambdavc, \sigma_0) = \frac{1}{\sigma_0^2\sqrt{1-\lambdavc^2}}\left[
         \begin{matrix}
           1 & -\lambdavc\\
           -\lambdavc & 1
         \end{matrix}
       \right].
\end{equation}
Combining \eqref{eq:counter_pr_ent} and \eqref{eq:counter_prior_prec}, we see that
\begin{align}
  \det\left(\Pi\right) &= \frac{1}{\sigma_0^4(1 - \lambdavc^2)}\det\left(\left[
         \begin{matrix}
           1 + n\sigma_0^2\sqrt{1 - \lambdavc^2} & -\lambdavc\\
           -\lambdavc & 1 + n\sigma_0^2\sqrt{1 - \lambdavc^2}
         \end{matrix}
       \right]\right)\nonumber\\
       &= \frac{(1 + n\sigma_0^2\sqrt{1 - \lambdavc^2})^2 - \lambdavc^2}{\sigma_0^4(1 - \lambdavc^2)}. \label{eq:counter_prec_det}
\end{align}
Now, combining \eqref{eq:counter_prec_det}, \eqref{eq:counter_pr_ent}, and \eqref{eq:counter_imim_decomp2}, we get that
\begin{equation}
  \label{eq:counter_imim_2_final}
  \MI\left(\thetav,\yv\mid\lambdav\right) = \frac{1}{2}\int\log\left(\frac{(1 + n\sigma_0^2\sqrt{1 - \lambdavc^2})^2 - \lambdavc^2}{1 - \lambdavc^2}\right)\p(\lambdavc)d\lambdavc.
\end{equation}
For any $n$ and $\sigma_0$, the integrand in \eqref{eq:counter_imim_2_final} is increasing in $\lambdavc$, and at $\lambdavc = 0$ it reduces to
\begin{equation}
  \label{eq:counter_imim_base_case}
  \log\left(1 + n\sigma_0^2\right) = \imim_{\b}.
\end{equation}
In light of \eqref{eq:counter_imim_decomp}, this proves that $\imim \geq \imim_{\b}$ for all priors $\p(\lambdavc)$.

\subsubsection{Partial expression for \texorpdfstring{$\fmim$}{FMI}}

First we derive an expression for $\fmim_{\b}$. We note that the posterior precision matrix for $\thetav$ in the base model given $(\yv,\yrep)$ is just $(\sigma_0^{-2} + 2n)\imat_2$. Thus, we obtain
\begin{equation}
  \label{eq:counter_fmim_base}
  -\fmim_{\b} = h\left(\thetav\mid \yv\right) - h\left(\thetav\mid\yv,\yrep\right) = \log\left(\frac{\sigma_0^{-2} + 2n}{\sigma_0^{-2} + n}\right) = \log\frac{1 + 2n\sigma_0^2}{1 + n\sigma_0^2}.
\end{equation}

Next, we decompose $\fmim$ in the expanded model just as we did in the previous section, writing
\begin{equation}
  \label{eq:counter_fmim_decomp}
  -\fmim = \MI\left((\thetav,\lambdav), \yrep\mid \yv\right) = \MI\left(\thetav, \yrep\mid \yv,\lambdav\right) + \MI\left(\lambdav, \yrep\mid\yv\right).
\end{equation}
As before, the second term on the right-hand side of \eqref{eq:counter_fmim_decomp} must be estimated (by taking the difference of estimates of the posterior entropy of $\lambdav$ given $\yv$ and given $(\yv,\yrep)$).

The first term on the right-hand side of \eqref{eq:counter_fmim_decomp} can be written as 
\begin{equation}
  \label{eq:counter_fmim_decomp_2}
  \MI\left(\thetav, \yrep\mid \yv,\lambdav\right) = \E\left[h_{\p(\thetavc\mid\yv,\lambdav)}(\thetav) - h_{\p(\thetavc\mid\yv,\yrep,\lambdav)}(\thetav)\right].
\end{equation}
The first entropy was determined in the previous section, and the second entropy is derived in exactly the same way except that all occurrences of $n$ are replaced by $2n$. This leads directly the following expression:
\begin{equation}
  \label{eq:counter_fmim_2_final}
  \MI\left(\thetav, \yrep\mid \yv,\lambdav\right) = \frac{1}{2}\int \log\left(\frac{\left(1 + 2n\sigma_0^2\sqrt{1 - \lambdavc^2}\right)^2 - \lambdavc^2}{\left(1 + n\sigma_0^2\sqrt{1 - \lambdavc^2}\right)^2 - \lambdavc^2}\right)\p(\lambdavc) d\lambdavc.
\end{equation}
In this case, for $n\sigma_0^2$ sufficiently large ($n\sigma_0^2> 1$ is large enough), the integrand is increasing and maximized at $\lambdavc = 0$, where it reduces to
\[
  \log\left(\frac{1 + 2n\sigma_0^2}{1 + n\sigma_0^2}\right) = -\fmim_{\b}.
\]

\end{document}